\definecolor{red}{RGB}{163, 31, 52}
\definecolor{gray}{RGB}{194, 192, 191}
\definecolor{blue}{RGB}{59, 89, 152}
\definecolor{green}{RGB}{2, 138, 67}
\definecolor{darkgreen}{rgb}{0.0, 0.52, 0.24}
\definecolor{shadecolor}{rgb}{0.9,0.9,1}
\theoremstyle{definition}
\newtheorem{theorem}{Theorem}[section]
\newtheorem{assumption}[theorem]{Assumption}
\newtheorem{proposition}[theorem]{Proposition}
\newtheorem{corollary}[theorem]{Corollary}
\newtheorem{lemma}[theorem]{Lemma}
\newtheorem{remark}[theorem]{Remark}
\newtheorem{example}[theorem]{Example}
\newtheorem{fact}[theorem]{Fact}
\DeclareMathOperator*{\argmax}{arg\,max}
\DeclareMathOperator*{\argmin}{arg\,min}
\DeclareMathOperator*{\diam}{diam}
\DeclareMathOperator*{\IS}{IS}
\DeclareMathOperator*{\ET}{ET}
\DeclareMathOperator*{\MT}{MT}
\DeclareMathOperator*{\MM}{MM}
\DeclareMathOperator*{\st}{s.t.}
\DeclareMathOperator*{\trace}{Tr}
\newcommand{\set}[2]{\left\{ #1 : #2 \right\}}
\newcommand{\tset}[2]{\{ #1 : #2 \}}
\newcommand{\norm}[1]{\| #1 \|}
\newcommand{\one}[1]{\mathbf 1 \left\{ #1 \right\}}
\renewcommand{\tfrac}[2]{#1/#2}
\renewcommand{\Re}{\mathbb R}
\renewcommand{\S}{\mathbb S}
\newcommand{\mc}{\mathcal}
\newcommand{\mb}{\mathbb}
\renewcommand{\d}{{\mathrm d}}
\newcommand{\defn}{:=}
\DeclarePairedDelimiterX{\inner}[2]{\langle}{\rangle}{#1, #2}
\pgfplotsset{compat=newest}
\title{Stochastic Optimization with Optimal Importance Sampling}
\author[1]{Liviu Aolaritei}
\author[2]{Bart P.G.\ Van Parys}
\author[3]{Henry Lam}
\author[1,4]{Michael I. Jordan}
\affil[1]{University of California, Berkeley, USA \protect\\ \texttt{liviu.aolaritei@berkeley.edu, jordan@cs.berkeley.edu}}
\affil[2]{CWI, Amsterdam, The Netherlands \protect\\ \texttt{bart.van.parys@cwi.nl}}
\affil[3]{Columbia University, New York, USA  \protect\\
\texttt{khl2114@columbia.edu}}
\affil[4]{Inria, Paris, France}
\date{}
\begin{document}
\maketitle

\begin{abstract}
Importance Sampling (IS) is a widely used variance reduction technique for enhancing the efficiency of Monte Carlo methods, particularly in rare-event simulation and related applications. Despite its effectiveness, the performance of IS is highly sensitive to the choice of the proposal distribution and often requires stochastic calibration. While the design and analysis of IS have been extensively studied in estimation settings, applying IS within stochastic optimization introduces a lesser-known fundamental challenge: the decision variable and the importance sampling distribution are mutually dependent, creating a circular optimization structure. This interdependence complicates both convergence analysis and variance control. In this paper, we consider the generic setting of convex stochastic optimization with linear constraints. We propose a single-loop stochastic approximation algorithm, based on a variant of Nesterov's dual averaging, that jointly updates the decision variable and the importance sampling distribution, notably without time-scale separation or nested optimization. The method is globally convergent and achieves the minimal asymptotic variance among stochastic gradient schemes, which moreover matches the performance of an oracle sampler adapted to the optimal solution and thus effectively resolves the circular optimization challenge.
\end{abstract}

\section{Introduction}
\label{sec:introduction}

Importance sampling (IS) is a powerful variance reduction technique that improves the efficiency of Monte Carlo methods. The key idea is to generate input samples from a deliberately distorted distribution, and then reweight the outputs using a likelihood ratio that accounts for the discrepancy between the original and distorted distributions. With a carefully chosen IS distribution, this approach can yield dramatic reductions in variance, particularly in rare-event simulation, often outperforming naïve Monte Carlo methods by several orders of magnitude. For foundational references, see \citep{bucklew2004introduction,  juneja2006rare,blanchet2012state}.

Despite its popularity, IS is often described as a “double-edged sword”. Its performance depends critically on the choice of the proposal distribution, which is typically sensitive to the underlying model. While a well-chosen IS distribution can lead to dramatic variance reductions, a poor choice can result in catastrophic performance degradation \cite{glynn1989importance, sadowsky2002large,l2010asymptotic,owen2000safe, hesterberg1995weighted}. A large body of work has therefore focused on how to design and calibrate effective IS distributions, using either analytical approaches based on large deviations theory or adaptive search methods that optimize IS parameters over suitable classes of distributions. The first approach, originating with \cite{siegmund1976importance}, leverages the change of measure used to establish large deviations rates \cite{dembo2016lecture} as a principled and often near-optimal importance sampling distribution, and has been applied to various systems across problem domains such as queueing \cite{dupuis2009importance,blanchet2014rare,kroese1999efficient,ridder2009importance}, finance and insurance \cite{glasserman2008fast,glasserman2005importance,asmussen1985conjugate,collamore2002importance,chen2019efficient}, reliability \cite{rubino2009markovian,heidelberger1995fast,nicola2002fast}, biology and multiscale processes \cite{sandmann2009rare,dupuis2012importance,vanden2012rare}. However, it typically requires analytically tractable models that admit a large deviations principle sufficiently explicit to be converted into efficient algorithms. The second approach is more versatile, in that it trades analytical precision for flexibility and requires less a priori model knowledge, but generally offers weaker theoretical guarantees. A common approach is the so-called cross-entropy method that casts the IS parameter search problem as a Kullback–Leibler divergence minimization against the oracle best IS distribution \cite{de2005tutorial,rubinstein2004cross}. This typically amounts to a sequential search process involving iterative updates of the IS parameters in order to control the underlying estimation variances. Besides this approach, other effective adaptive methods or enhancements to cross-entropy have been proposed \cite{egloff2010quantile, bugallo2017adaptive,botev2013markov,chan2012improved,grace2014automated,parpas2015importance}. In this work, we adopt the latter perspective, focusing on methods that are broadly applicable and emphasize practical versatility.

Specifically, we depart from the probability estimation problem that is extensively studied in the rare-event literature described above. Our main focus is the use of importance sampling (IS) for solving stochastic optimization problems that involve rare events. More precisely, we consider problems of the form $\min_{\theta\in\Theta} \mathbb E_{X \sim \mathbb P}\left[ F(\theta, X) \right]$, where $\theta$ is a decision variable in a feasible region $\Theta$ and $X$ is a random variable drawn from a distribution $\mathbb P$. In settings where the loss function $F(\theta, X)$ involves, for example, indicators of rare events, standard sampling-based optimization algorithms become inefficient due to the scarcity of informative samples.
In such cases, IS can significantly improve sample efficiency. However, while there is a substantial literature on IS for estimating probabilities and expectations, the integration of IS into decision-making and stochastic optimization has only recently begun to receive focused attention (see Section~\ref{subsec:related:work} on related work). Our work contributes to this emerging line of research. In particular, we present what is, to the best of our knowledge, the first framework that bridges state-of-the-art techniques from stochastic first-order optimization and IS parameter calibration. 

We now describe our problem setting and its underlying challenges more precisely. We consider a stochastic optimization problem where the objective takes the form $f(\theta) := \mathbb{E}_{X \sim \mathbb{P}}[F(\theta, X)]$, but the expectation is not available in closed form. Instead, we assume access to samples from the nominal distribution $\mathbb P$ as well as from a parametrized class of IS distributions. Our goal is to leverage this IS family to accelerate the optimization process. This setup presents several layers of difficulty. The first challenge stems from the sensitivity of IS performance to the choice of sampling distribution: an effective IS distribution must be tightly matched to the underlying model. In an optimization context, however, the model is inherently uncertain, since the optimal decision $\theta^\star$ is not known a priori. This creates a fundamental tension: without a good decision, we cannot identify an effective IS distribution; and without an effective IS distribution, sampling is too inefficient to reliably identify a good decision. This circular dependency has recently been highlighted and studied by \citet{he2023adaptive}, who propose selecting the IS distribution through an iterative process. \citet{he2023adaptive}, however, crucially assumes an a priori known scheme that maps model parameters into effective IS parameters. This important presumption implicitly limits applicability to tractable models that are amenable to analysis via large deviations techniques, in which case a computable mapping can be derived to obtain IS parameters from the model specifications. In general, as we have described earlier, we aim to offer a more versatile and widely applicable approach. In this case, IS calibration, even for a \emph{fixed} model, requires an auxiliary sampling-based optimization routine such as the cross-entropy method \cite{deboer2005tutorial}, direct variance minimization \cite{owen2000safe}, or their variants. As a first goal of this paper, we develop an efficient algorithm that simultaneously handles the optimization of the IS parameters and the underlying decision problem. With this goal in mind, we study our approach under two additional, substantial, generality considerations from an optimization perspective. First, we allow for constraints on the decision variable that introduce another layer of difficulty. The optimal IS distribution can change discontinuously with the active constraint set, which can undermine the stability of IS-based procedures. As a result, one is effectively required to solve two intertwined optimization problems: one for the decision $\theta$, and one for the IS parameters, and this interdependence can multiply the computational cost. Second, we aim to establish global optimality guarantees. Even if the objective is convex in $\theta$, and the IS calibration problem is well-posed, the interaction between the two leads to a lack of joint convexity, making it nontrivial to establish global convergence or optimality for the combined procedure.

\subsection{Contributions}
\label{subsec:contributions}

Given the above discussion, our main contribution is a stochastic approximation framework that integrates versatile, search-based importance sampling directly into convex stochastic optimization with linear constraints. Our core idea is to jointly update the decision variable and the importance sampling distribution using a single-loop stochastic approximation scheme based on a joint variant of Nesterov’s dual averaging (NDA). The resulting method operates without time-scale separation or nested optimization and applies to broad families of importance sampling distributions. Taken together, our work represents, to the best of our knowledge, the first attempt to integrate first-order optimization methods with importance sampling, at a level sophisticated enough to allow the attainment of several notable new guarantees. More precisely, our main technical contributions are summarized below.

\begin{enumerate}
\item \textbf{Global convergence without joint convexity.}
We establish global convergence of the joint decision and sampling iterates generated by the proposed algorithm under convexity of the objective function and standard regularity conditions on the importance sampling family, including log-convex likelihood ratios as in exponential tilting or mean translation. These guarantees hold despite the fact that the combined optimization problem over the decision and sampling parameters is not jointly convex.

\item \textbf{No time-scale separation or nested optimization.}
The proposed method resolves the coupled decision and importance sampling problem using a single-loop stochastic approximation scheme based on a joint dual averaging update. The algorithm does not require time-scale separation or inner optimization loops, in contrast to bilevel or alternating procedures commonly used in related settings. This structure leads to a streamlined analysis and a simple algorithmic implementation.

\item \textbf{Joint handling of linear constraints and importance sampling.}
Linear constraints introduce additional challenges for importance sampling calibration, since the optimal sampling distribution can vary discontinuously with the active constraint set. We show that, by leveraging a variant of Nesterov’s dual averaging within the joint stochastic approximation scheme, the active constraints are identified in finite time. This property is key to establishing global convergence and asymptotic optimality for convex stochastic optimization problems with linear constraints.

\item \textbf{Asymptotically optimal variance without prior importance sampling knowledge.}
We show that the proposed joint scheme resolves the circular dependence between decision optimization and importance sampling calibration in the asymptotic regime. In particular, the averaged decision iterates achieve minimal asymptotic variance among stochastic gradient schemes, matching the performance of an oracle that has access to the optimal importance sampling distribution in advance. Thus, in the asymptotic sense, the performance of our approach cannot be improved.
\end{enumerate}

\subsection{Related Work}
\label{subsec:related:work}

This work lies at the intersection of two large bodies of literature: stochastic optimization and importance sampling. We have already described a range of related work in the introduction. Here, we focus our discussion on contributions that directly address the intersection of these two bodies of literature and that we have not covered yet.

The use of importance sampling (IS) in stochastic optimization has primarily been studied in the context of specific objectives, such as quantile, value-at-risk (VaR), and conditional value-at-risk (CVaR) estimation. \citet{egloff2010quantile} propose a stochastic approximation scheme to asymptotically identify a minimum-variance importance sampler for quantile estimation. While they establish almost sure convergence to the desired quantile, they do not analyze the asymptotic variance of the resulting estimator. \citet{pan2020adaptive} develop an adaptive IS approach for quantile estimation using a two-layer model, where the inner layer employs a heuristic metamodel. Under this structure, their method achieves convergence to the quantile and demonstrates variance reduction relative to standard Monte Carlo, but they do not analyze the magnitude of improvement. Among the works most closely related to ours are \citet{he2023adaptive} and \citet{Bardou2009computing}, which we now discuss in more detail in terms of both their connections to and distinctions from our work.

\citet{he2023adaptive} consider adaptive IS for stochastic root-finding problems, which can arise from solving optimization problems. They raise the circularity challenge we described in the introduction, where, on one hand, good IS parameters rely on accurate model specifications, including the decision, while, on the other hand, a good decision relies on efficient IS that hinges on good parameters. To tackle this challenge, \citet{he2023adaptive} propose an adaptive approach to embed IS into sequential algorithms such as stochastic approximation, and show how appropriate configuration can achieve the same asymptotic variance as if using an IS based on a priori knowledge of the optimal solution. They refer to this property as minimax optimality. While their work, like us, directly tackles the circularity issue in IS calibration, it crucially assumes knowledge of a mapping from decisions to optimal IS parameters, which limits applicability to problems with sufficient analytical structure—typically those where large deviations theory can be used to derive this mapping. In contrast, our method performs stochastic calibration of IS parameters without requiring such mappings to be known in advance. Furthermore, our framework provides global convergence guarantees for general convex objectives with linear constraints, whereas \citet{he2023adaptive} focus on local convergence in unconstrained, predominantly univariate settings.

\citet{Bardou2009computing} study IS for VaR and CVaR estimation, proposing a stochastic approximation scheme that asymptotically identifies a minimum-variance IS distribution. Under certain convexity conditions on the IS class, they establish a central limit theorem with optimal asymptotic variance. Our work extends this framework in several key directions: (i) we consider general multivariate stochastic optimization problems rather than scalar objectives; (ii) our algorithm handles linear constraints via a recent stochastic approximation technique introduced by \citet{duchi2021asymptotic}; and (iii) we operate under bounded IS parameter spaces, which simplifies the procedure relative to that of \citet{Bardou2009computing}, who employ an auxiliary IS step to handle unbounded domains.

Beyond the IS literature, our work is also connected to broader questions in stochastic optimization. The performance of stochastic algorithms, particularly in modern large-scale settings such as deep learning \cite{bengio2012practical}, is often highly sensitive to hyperparameter tuning. This positions our approach within the emerging literature on meta-optimization \cite{hospedales2021meta}, where higher-level optimization problems govern the tuning of algorithmic components such as learning rates or sampling distributions.

Finally, we emphasize that, while our method guarantees asymptotic optimality within the chosen IS class, we do not make claims about how specific IS families accelerate convergence in practice. This is highly problem-dependent and lies beyond the scope of this work. For recent efforts in this direction, we refer to \citet{deo2023achieving, deo2024importance}, who study exponential tilting and self-structuring transformations to accelerate CVaR optimization.

Taken together, these works underscore the need for scalable, theoretically sound methods that unify IS calibration and decision-making. Our framework addresses this need through a single-loop, globally convergent approach.

\subsection{Organization and Notation}

\noindent\textbf{Organization.}
Section \ref{sec:sa:with:is} sets up the constrained convex stochastic optimization problem and reviews asymptotic optimality results for stochastic approximation methods that motivate variance as the key efficiency criterion. Section \ref{sec:importance-sampling} develops the importance sampling framework and formulates the variance-minimization objective that defines the optimal proposal distribution within a parametrized IS family. Section \ref{sec:SO:with:IS} presents the proposed adaptive importance sampling approach and the joint update scheme for simultaneously learning the decision variable and the IS parameters (see equation~\eqref{eq:NDA:SA+IS:iteration}). Section \ref{sec:conv:analysis} establishes almost sure convergence of the coupled iterates and derives central limit theorems showing asymptotic variance optimality of the averaged iterates (and associated projected-gradient optimality). Section \ref{sec:numerics} provides a numerical illustration demonstrating adaptivity and variance reduction in a rare-event regime. Technical lemmas and auxiliary results are collected in the Appendix.

\medskip

\noindent\textbf{Notation.} Throughout the paper, $\|\cdot\|$ denotes the Euclidean norm. For any matrix $A$, we write $A^\dagger$ for its Moore–Penrose pseudoinverse. The set of all symmetric positive definite matrices is denoted by $\mathbb{S}_{++}$. The set of strictly positive real numbers is written as $\mathbb{R}_{++} = \{x \in \mathbb{R}: x > 0\}$. Given a set $\mathcal{A}$, the characteristic function $\mathbf{1}_{\mathcal{A}}(x)$ equals $1$ if $x \in \mathcal{A}$ and $0$ otherwise. The indicator function $\delta_{\mathcal{A}}(x)$ equals $0$ if $x \in \mathcal{A}$ and $\infty$ otherwise. For random variables $\{X_n\}_{n \in \mathbb{N}}$ and $X$, we write $X_n \overset{\text{a.s.}}{\to} X$ to denote almost sure convergence and $X_n \overset{d}{\to} X$ for convergence in distribution.

\section{Asymptotic Optimality in Stochastic Optimization}
\label{sec:sa:with:is}

In this paper we consider the problem of efficiently computing the optimal solution of the following stochastic optimization  problem
\begin{align}
  \label{eq:so}
  \begin{split}
    \min_\theta\;\; & f(\theta):= \mathbb E_{X \sim \mathbb P}\left[ F(\theta, X) \right] \\
    \st \;\; & \theta \in \Theta:=\{\theta \in \mathbb R^s:\, A \theta \leq b\},
  \end{split}
  \tag{SO}
\end{align}
with stochastic objective function $F:\mathbb R^s \times \mathbb R^r \to \mathbb R$, technology matrix $A \in \mathbb R^{p \times s}$, budget vector $b \in \mathbb R^p$, and where $X$ is a random vector distributed according to the probability distribution $\mathbb P$ on $\mathcal X\subseteq \Re^r$. A standing assumption in the paper is that the distribution $\mathbb P$ is known, or more accurately, that we can generate samples from $\mb P$ efficiently. We denote by $\theta^\star$ an optimal solution of \eqref{eq:so}.

Stochastic optimization problems of the form \eqref{eq:so} are ubiquitous in a myriad of domains, including machine learning, stochastic control, queuing theory, portfolio selection and risk management; see \citet{schneider2007stochastic,shapiro2021lectures}.
We consider convex stochastic optimization problems which satisfy the following structural assumptions.

\begin{assumption}[Stochastic Optimization]~
\label{assump:SO}
\begin{enumerate}[label=(\roman*)]
\item\label{assump:SO:convex} The objective function $f:\Theta\to\Re$ is convex.

\item \label{assump:SO:differentiable} The objective function $f:\Theta\to\Re$ is continuously differentiable.

\item \label{assump:SO:twice:differentiable} The objective function $f:\Theta\to\Re$ is twice continuously differentiable in a neighborhood of $\theta^\star$.

\item\label{assump:SO:unique} The stochastic optimization problem~\eqref{eq:so} admits a unique minimizer $\theta^\star$ with $\nabla^2 f(\theta^\star)\in \S_{++}$.
  
\item\label{assump:SO:bounded} The constraint set $\Theta$ is bounded.

\end{enumerate}
\end{assumption}

Assumption \ref{assump:SO}\ref{assump:SO:convex} ensures that \eqref{eq:so} is a convex minimization problem. A sufficient condition for Assumption \ref{assump:SO}\ref{assump:SO:convex} to hold is that the function $\theta \mapsto F(\theta, x)$ is a convex function for every $x\in \mathcal X$, and
\(
\mathbb E_{X \sim \mathbb P}\left[\,|F(\theta,X)|\right] < \infty
\)
for all $\theta\in \Theta$.
Assumption \ref{assump:SO}\ref{assump:SO:differentiable} demands that the objective function be sufficiently regular. In particular, Assumptions \ref{assump:SO}\ref{assump:SO:differentiable} and \ref{assump:SO}\ref{assump:SO:bounded} together ensure that the minimum in problem \eqref{eq:so} is achieved via Weierstrass' extreme value theorem. We remark that if $\mc X$ is a finite set then $\theta \mapsto F(\theta, X)$ being continuously differentiable for every $x\in \mathcal X$ is sufficient to ensure that Assumption \ref{assump:SO}\ref{assump:SO:differentiable} holds.
In the more general case, Assumption \ref{assump:SO}\ref{assump:SO:differentiable} holds if the functions $\{G(\theta, x) := \nabla_{\theta}F(\theta, x)\}_{x\in \mathcal X}$ are equicontinuous and $\mathbb E_{X \sim \mathbb P}\left[\norm{G(\theta,X)}\right] < \infty$ for all $\theta\in \Theta$.
Finally, Assumption~\ref{assump:SO:twice:differentiable} allows us to employ the delta method to prove asymptotic normality results.

In most practical settings, however, neither the objective function $f:\Theta\to\Re$ nor its gradient $\nabla f:\Theta\to\Re^s$ can be efficiently evaluated, as this would require high-dimensional integration. Instead, an optimizer typically only has access to stochastic gradients $G(\theta, X)$, which represent the gradient of our objective function only in expectation. Optimization algorithms which attempt to solve the problem \eqref{eq:so} using only stochastic gradients are generally known under the name \emph{stochastic approximation algorithms}, and have been studied since the seminal paper by \citet{robbins1951stochastic}. 

In what follows, we would like to argue that, under Assumption \ref{assump:SO}\ref{assump:SO:unique}, a good yardstick to measure the ability of a stochastic approximation algorithm to solve problem \eqref{eq:so} is the variance $\text{Var}_{X \sim \mathbb P}\left[G(\theta^\star,X)\right]$ at the optimal solution $\theta^\star$. Thus in later sections our goal will be to find stochastic gradients with ``small'' variance.

\subsection{Unconstrained Stochastic Optimization}
\label{subsec:unconstrained:stoch:approx}

To build intuition, we first consider unconstrained stochastic optimization problems, where $\Theta = \mathbb R^s$.
As the objective function $f$ is continuously differentiable and convex, solving the problem \eqref{eq:so} is equivalent to finding the root of its gradient function \citep[Proposition 5.4.7]{bertsekas2009convex}, i.e., problem~\eqref{eq:so} is equivalent to the first-order optimality condition
\begin{align}
  \label{eq:root-finding}
    \nabla f(\theta^\star)= 0.
\end{align}
A standard approach to solve root finding problems of the type \eqref{eq:root-finding} is to consider the \emph{Robbins-Monro stochastic approximation} (RM-SA) iteration
\begin{align}
\label{eq:SA}
    \theta_{n+1} &= \theta_n - \alpha_{n+1} K G(\theta_n, X_{n+1}),
\end{align}
with arbitrary $\theta_0 \in \mathbb R^s$, and with step-sizes $\alpha_{n}$ satisfying the classical convergence conditions $\sum_{i=1}^\infty \alpha_i = \infty$ and $\sum_{i=1}^\infty \alpha_i^2 < \infty$.
The standard choice for the step-sizes in RM-SA is $\alpha_n = \alpha/n$ for some constant $\alpha$.
Despite its simplicity, one drawback of this method is that its optimal version (having the smallest asymptotic variance \citep{polyak1992acceleration}, \cite[Chapter~VIII-3]{asmussen2007stochastic}; see below in equation \eqref{eq:PR-SA:CLT}), under Assumption \ref{assump:SO}\ref{assump:SO:unique}, requires forming $ K = \nabla^2 f(\theta^\star)^{-1}$, which, in turn, requires knowledge of the unknown optimal solution $\theta^\star$. Moreover, it has been observed that the RM-SA method is sensitive to the choice of the step-sizes, often resulting in poor practical performance (see the discussion in \cite[Section~4.5.3]{spall2005introduction}, and the elucidating example in \cite[Section~2.1]{nemirovski2009robust}).

An important improvement of the RM-SA method is the \emph{Polyak-Ruppert stochastic approximation} (PR-SA) iteration, proposed independently by Polyak \cite{polyak1990new, polyak1992acceleration} and Ruppert \cite{ruppert1988efficient}, which takes long step-sizes coupled with a subsequent averaging of the iterates,
\begin{align}
\label{eq:PR-SA}
\begin{split}
    \theta_{n+1} &= \theta_n - \alpha_{n+1} G(\theta_n, X_{n+1}),\\
    \bar{\theta}_n &= \frac{1}{n} \sum_{i=0}^{n-1} \theta_i,
\end{split}
\end{align}
for some $\theta_0 \in \mathbb R^s$. A standard choice for the step-sizes in PR-SA is $\alpha_n = \alpha/n^\gamma$, for $\gamma \in (1/2,1)$ and some constant $\alpha$ (see \cite[Assumption~3.4]{polyak1992acceleration}).
Under mild regularity conditions, in particular Assumption~\ref{assump:SO}\ref{assump:SO:unique}, \citet[Theorem~2]{polyak1992acceleration} prove that $\bar{\theta}_n \overset{\text{a.s.}}{\to} \theta^\star$ and that the following central limit theorem (CLT) holds:
\begin{align}
\label{eq:PR-SA:CLT}
    \sqrt{n}\left( \bar{\theta}_n - \theta^\star \right) \overset{d}{\to} \mathcal N \left(0, (\nabla^2  f(\theta^\star))^{-1} \text{Var}_{X \sim \mathbb P}\left[G(\theta^\star,X)\right] (\nabla^2  f(\theta^\star))^{-1}\right).
\end{align}
That is, the convergence of $\bar \theta_n$ to $\theta^\star$ takes place asymptotically at order $\mc O(1/\sqrt{n})$, while the variance $(\nabla^2  f(\theta^\star))^{-1} \text{Var}_{X \sim \mathbb P}\left[G(\theta^\star,X)\right] (\nabla^2  f(\theta^\star))^{-1}$ can be interpreted as a characterization of the convergence speed (at this order). Importantly, the asymptotic variance of the PR-SA method is \emph{optimal} among the stochastic gradient-based methods (see \cite[Chapter~8,~Section~4]{asmussen2007stochastic}); that is, the PR-SA iterates enjoy minimal (in a semidefinite sense) asymptotic variance among all estimates based on stochastic gradients.

The distance of the iterates $\bar \theta_n$ to the root $\theta^\star$ is one particular sub-optimality measure. Following the optimality condition \eqref{eq:root-finding}, another natural sub-optimality measure is the size of the gradients $\nabla {f}(\bar{\theta}_n)$ \citep{nesterov2012make}.
In fact, this optimality criterion has seen a surge of recent interest in the optimization community, since it generalizes more naturally to nonconvex problems \cite{foster2019complexity}. 
Under the same regularity assumptions as in \citet[Theorem~2]{polyak1992acceleration}, and since the gradient function is continuous (by Assumption~\ref{assump:SO}\ref{assump:SO:differentiable}), the Mann–Wald theorem ensures that $\nabla {f}(\bar{\theta}_n) \overset{\text{a.s.}}{\to} 0$. Moreover, using the optimality condition \eqref{eq:root-finding} and the fact that the gradient function is continuously differentiable in a neighborhood of $\theta^\star$ (by Assumption~\ref{assump:SO}\ref{assump:SO:twice:differentiable}), the delta method can be employed to recover the following CLT:
\begin{align}
\label{eq:gradient:asymp:normality}
    \sqrt{n}\, \nabla f(\bar{\theta}_n) \overset{d}{\to} \mathcal N \left(0, \text{Var}_{X \sim \mathbb P}\left[G(\theta^\star,X)\right]\right).
\end{align}
Following equation \eqref{eq:gradient:asymp:normality}, the quality of the iterates (as measured by the size of their gradients) in the optimization problem \eqref{eq:so} is also characterized by the variance at the optimal solution $\theta^\star$.


\subsection{Constrained Stochastic Optimization}
\label{subsec:constrained:stoch:approx}

We now consider the general constrained stochastic optimization problem.
Following \citep[Proposition 5.4.7]{bertsekas2009convex}, the necessary and sufficient first-order optimality conditions of problem~\eqref{eq:so} under Assumption \ref{assump:SO}\ref{assump:SO:convex} and \ref{assump:SO}\ref{assump:SO:differentiable} reduce to:
\begin{equation}
  \label{eq:kkt-conditions-orig}
  \begin{split}
    &\exists \lambda^\star\geq 0,~ A \theta^\star - b \leq 0,\\
    &\nabla {f}(\theta^\star) + A^\top \lambda^\star=0, \\
    &(A \theta^\star - b)^\top \lambda^\star = 0.
  \end{split}
\end{equation}
It will be important in the sequel to partition the constraints in problem \eqref{eq:so} into active and inactive constraints.
First, for any feasible point $\theta \in \Theta$, we denote by $A_a^\theta$ and $b_a^\theta$ the active parts of the constraints, i.e., $A_{a}^\theta \theta - b_a^\theta=0$. Moreover, we denote by $A^\star_a$, and $b_a^\star$ the active parts of the constraints at the unique optimal solution $\theta^\star$. Secondly, we denote by $A_i^\theta$ and $b_i^\theta$ the inactive parts of the constraints at any $\theta \in \Theta$, i.e., $A_{i}^\theta \theta - b_i^\theta<0$, and by $A^\star_i$, and $b_i^\star$ the active parts of the constraint at $\theta^\star$. Let $\text{P}_{A^\star_a}= I - {A^\star_{a}}^\top ({A^\star_{a}} {A^\star_{a}}^\top)^\dagger {A^\star_{a}}$ be the orthogonal projector onto the null space of the active constraints $\{\theta \in \mathbb R^s:\; A^\star_{a} \theta = 0\}$, 
where $({A^\star_{a}} {A^\star_{a}}^\top)^\dagger$ denotes the Moore-Penrose inverse of the matrix ${A^\star_{a}} {A^\star_{a}}^\top$. Given the partition into active and inactive constraints, the optimality conditions \eqref{eq:kkt-conditions-orig} imply the following fact.

\begin{fact}[Optimality conditions]
\label{fact:optimality:conditions}
  We have
  \begin{equation}
  \label{eq:kkt-conditions}
  A^\star_a \theta^\star - b_a^\star=0, \quad \text{P}_{A^\star_a} \nabla {f}(\theta^\star)=0.
\end{equation}
\end{fact}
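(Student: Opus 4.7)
The plan is to deduce both equalities in Fact~\ref{fact:optimality:conditions} directly from the KKT system~\eqref{eq:kkt-conditions-orig}, combining complementary slackness with a small linear-algebraic identity for the projector $\text{P}_{A^\star_a}$.

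The first equality, $A^\star_a \theta^\star - b_a^\star = 0$, is tautological: by definition, $(A^\star_a, b_a^\star)$ is precisely the subsystem of $(A,b)$ on which equality holds at $\theta^\star$. For the second equality, I would start by exploiting complementary slackness $(A\theta^\star - b)^\top \lambda^\star = 0$ together with the sign conditions $\lambda^\star \geq 0$ and $A\theta^\star - b \leq 0$ to conclude that $\lambda_j^\star = 0$ at every inactive index $j$. Consequently $A^\top \lambda^\star = {A^\star_a}^\top \lambda_a^\star$, where $\lambda_a^\star$ is the subvector of $\lambda^\star$ indexed by active constraints, and the stationarity condition $\nabla f(\theta^\star) + A^\top \lambda^\star = 0$ rearranges to $\nabla f(\theta^\star) = -{A^\star_a}^\top \lambda_a^\star$. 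This exhibits $\nabla f(\theta^\star)$ as an element of the range of ${A^\star_a}^\top$.

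It then suffices to observe that $\text{P}_{A^\star_a}$, being the orthogonal projector onto the null space of $A^\star_a$, vanishes on the range of ${A^\star_a}^\top$—the orthogonal complement of that null space—and hence annihilates $\nabla f(\theta^\star)$. The only step requiring a brief computation, and the principal obstacle should one want to allow $A^\star_a$ to be rank-deficient (i.e., to avoid invoking a linear independence constraint qualification), is verifying $\text{P}_{A^\star_a} {A^\star_a}^\top = 0$ from the explicit pseudoinverse formula. This reduces to the identity ${A^\star_a}^\top (A^\star_a {A^\star_a}^\top)^\dagger (A^\star_a {A^\star_a}^\top) = {A^\star_a}^\top$, which I would derive from the standard relation $M M^\dagger M = M$ applied to the symmetric positive semidefinite matrix $M = A^\star_a {A^\star_a}^\top$, together with the fact that $M$ and $A^\star_a$ share the same column space. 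The pseudoinverse is what makes this argument valid without any constraint qualification on $A^\star_a$.
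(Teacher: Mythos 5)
Your proof is correct and follows essentially the same route as the paper's: use complementary slackness in \eqref{eq:kkt-conditions-orig} to eliminate the inactive multipliers, reduce stationarity to $\nabla f(\theta^\star) = -{A^\star_a}^\top \lambda_a^\star$, and then project. Where the two diverge slightly is in presentation of the last step: the paper phrases it as $\theta^\star$ being optimal in the reparametrized unconstrained problem $\min_\theta f(\theta_a + \text{P}_{A^\star_a}\theta)$ and leaves the projector annihilation $\text{P}_{A^\star_a}{A^\star_a}^\top = 0$ implicit, whereas you spell out that identity directly via $MM^\dagger M = M$ with $M = A^\star_a {A^\star_a}^\top$ and the range identity $\text{Range}(A^\star_a {A^\star_a}^\top) = \text{Range}(A^\star_a)$ (equivalently $\text{Null}(A^\star_a{A^\star_a}^\top) = \text{Null}({A^\star_a}^\top)$). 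Your version is a bit more self-contained and, as you note, makes explicit why the pseudoinverse spares you any rank or constraint-qualification assumption on $A^\star_a$; the paper's version is shorter but relies on the reader to supply that computation.
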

\begin{proof}
From the complementarity optimality condition in equation \eqref{eq:kkt-conditions-orig} it follows immediately that the dual variables associated with the inactive components vanish, i.e., $\lambda^\star_i=0$.
Thus, we must have that $\theta^\star$ is optimal in the unconstrained problem
$\min_\theta\, {f}(\theta_a + \text{P}_{A^\star_a}\theta) = \min_\theta \set{{f}(\theta)}{A^\star_a\theta-b^\star_a=0}$ for an arbitrary $\theta_a$ satisfying $A^\star_{a} \theta_a - b^\star_a=0$, as $(\theta^\star, \lambda_a^\star)$ can be easily verified to satisfy its optimality conditions $A^\star_a \theta^\star - b_a^\star=0$ and $\nabla {f}(\theta^\star) + {A^\star_a}^\top \lambda_a^\star=0$.
\end{proof}

The previous fact implies that we are looking for a point $\theta^\star$ in the affine subspace associated with the active constraints, for which the gradient is orthogonal to the active constraints; see Figure~\ref{fig:so}. This geometric observation motivates all active set methods \cite{nocedal1999numerical}, with the simplex method as the primary example.

\begin{figure}[t]
  \centering
  \begin{subfigure}[t]{0.43\textwidth}
    \centering
    \includegraphics[width=0.9\textwidth]{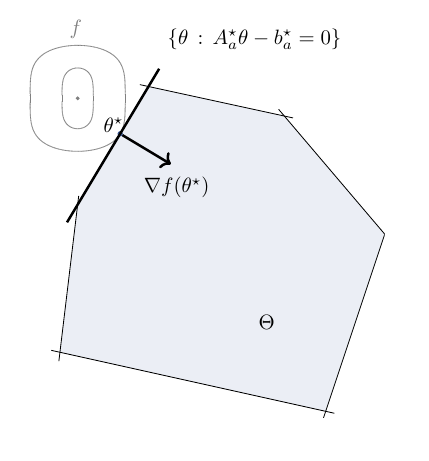}
    \subcaption{Problem \eqref{eq:so}.}
    \label{fig:so}
  \end{subfigure}%
  \hspace{5em}
  \begin{subfigure}[t]{0.43\textwidth}
    \centering
    \includegraphics[width=0.9\textwidth]{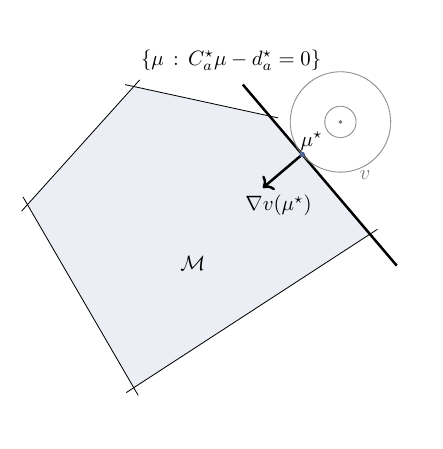}
    \subcaption{Problem \eqref{eq:optimal:IS:2}.}
    \label{fig:is}
  \end{subfigure}
  \caption{Two constrained stochastic optimization problem. The minimum $\theta^\star$ in \eqref{eq:so} is characterized as the minimum restricted to the active constraint set $\set{\theta}{A^\star_a\theta-b^\star_a=0}$. Likewise, the minimum $\mu^\star$ in \eqref{eq:optimal:IS:2} is characterized as the minimum restricted to the active constraint set $\set{\mu}{C^\star_a\mu-d^\star_a=0}$. }
  \label{fig:constrained-so}
\end{figure}

In the presence of constraints, the performance analysis of stochastic approximation algorithms is however much more delicate.
Through a local asymptotic minimax argument, \citet[Theorems~1 and 4]{duchi2021asymptotic} show that any sequence of approximations $\{\bar \theta_n\}_{n \in \mathbb N}$ which satisfies the CLT
\begin{align}
\label{eq:LB:CLT}
    \sqrt{n}\left( \bar{\theta}_n - \theta^\star \right) \overset{d}{\to} \mathcal N \left(0, \text{Q}^{\dagger}\, \text{Var}_{X \sim \mathbb P}\left[G(\theta^\star,X)\right] \text{Q}^{\dagger} \right),
\end{align}
with $\text{Q}:=\text{P}_{A^\star_{a}}\nabla^2  f(\theta^\star) \text{P}_{A^\star_{a}}$,
enjoys in fact optimal asymptotic convergence. It can be remarked here that the performance lower bound in equation \eqref{eq:LB:CLT} coincides with the asymptotic variance in \eqref{eq:PR-SA:CLT} of the classical PR-SA applied to the unconstrained problem $\min_\theta\, {f}(\theta_a + \text{P}_{A^\star_a}\theta)$, for an arbitrary $\theta_a$ satisfying $A^\star_{a} \theta_a - b^\star_a=0$.
However, the classical PR-SA applied to the unconstrained problem is not a viable algorithm as the active constraints are not known a priori. 
To solve this issue, \citet{duchi2021asymptotic} introduce the following variant of Nesterov's dual averaging (NDA) \cite{nesterov2009primal} iterates
\begin{align}
\label{eq:NDA}
\begin{split}
    \theta_{n+1} &= \argmin_{\theta \in \Theta} \left\{ \inner{\textstyle\sum_{k=0}^n \alpha_{k+1} G(\theta_k, X_{k+1})}{\theta} +\frac{1}{2} \| \theta - \theta_{0} \|^2\right\},\\
    \bar{\theta}_n &= \textstyle\frac{1}{n} \sum_{i=0}^{n-1} \theta_i,
\end{split}
\end{align}
with $\theta_0 \in \mathbb R^s$ and $\alpha_n = \alpha/n^\gamma$, for $\gamma \in (1/2,1)$ and some constant $\alpha>0$.
Notice that if the stochastic optimization problem~\eqref{eq:so} is unconstrained, then iteration~\eqref{eq:NDA} simply reduces to the classical PR-SA.
Hence, the iterates $\bar \theta_n$ in equation~\eqref{eq:NDA} can be considered a direct generalization of the classical PR-SA to the constrained setting.
Under standard regularity assumptions (similar in nature to those in \citet{polyak1992acceleration}) and under the condition that the constraint set $\Theta$ is polytopic (as in problem~\eqref{eq:so}), \citet{duchi2021asymptotic} show that the averaged iterates satisfy the almost sure convergence $\bar{\theta}_n \overset{\text{a.s.}}{\to} \theta^\star$, as well as the CLT \eqref{eq:LB:CLT}. Remarkably, the proposed iteration identifies the active constraints in finite time. 
That is, there exists some (random) finite $N \in \mathbb N$ such that $A^\star_{a} \theta_n - b^\star_a=0$ and $A^\star_{i} \theta_n - b_i<0$ for all $n \geq N$, and the iterations are identical to those of the PR-SA method in the affine subspace $\{\theta :\; A^\star_{a} \theta - b^\star_a=0\}$ for all $n \geq N$. 

\begin{remark}[Projected SA]
\label{remark:projectedSA}
Perhaps surprisingly, the standard projected versions of the RM-SA or PR-SA iterations
\begin{align*}
    \theta_{n+1} &= \argmin_{\theta \in \Theta} \left\{ \inner{\alpha_{n+1} G(\theta_n, X_{n+1})}{\theta} +\frac{1}{2} \| \theta - \theta_{n} \|^2\right\}
\end{align*}
(with subsequent averaging for PR-SA) fails to identify the active constraints in finite time. Indeed, there are many instances where the iterates do not satisfy the active constraints with constant non-zero probability at each iteration, and consequently jump off the constraint infinitely often (see \cite{lee2012manifold,duchi2021asymptotic} for details). Nonetheless, \citet{davis2023asymptotic} show that this standard projected version also achieves the guarantee \eqref{eq:LB:CLT} with minimal asymptotic variance. Moreover, unlike NDA, the optimality of projected RM-SA or PR-SA iterations is not restricted to polytopic constraint sets $\Theta$. As will become clear later, the finite-time identification of the active constraints is a key step in the convergence analysis of our method. For this reason, in this paper we will adopt the NDA iteration \eqref{eq:NDA}.\hfill $\clubsuit$
\end{remark}

Similarly to the unconstrained case and in view of the optimality conditions (\ref{eq:kkt-conditions}), an alternative sub-optimality metric is to consider the size of the gradient component in the nullspace associated with the active constraints, i.e., $\text{P}_{A^\star_{a}} \nabla {f}(\bar{\theta}_n)$, as well as the size of the residuals $A^\star_a\bar\theta_n-b^\star_a$.

\begin{lemma}[Projected gradient CLT]
\label{lemma:kkt-residual}
Let Assumptions~\ref{assump:SO}\ref{assump:SO:twice:differentiable}-\ref{assump:SO:unique} be satisfied. Then, any iterate sequence $\bar \theta_n$ enjoying the CLT \eqref{eq:LB:CLT} must also satisfy the CLT
\begin{align}
\label{eq:NDA:CLT}
    \sqrt{n}\, \text{P}_{A^\star_{a}}  \nabla f(\bar{\theta}_n) \overset{d}{\to} \mathcal N \left(0, \text{Var}_{X \sim \mathbb P}\left[\text{P}_{A^\star_{a}} G(\theta^\star,X) \right] \right).
\end{align}
\end{lemma}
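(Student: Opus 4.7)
The plan is to apply the classical delta method to the map $g:\theta\mapsto \text{P}_{A^\star_a}\nabla f(\theta)$ at the point $\theta^\star$, and then simplify the resulting asymptotic covariance using the structural identity $Q=\text{P}_{A^\star_a}\nabla^2 f(\theta^\star)\text{P}_{A^\star_a}$ together with standard properties of the Moore–Penrose pseudoinverse.

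First, Fact~\ref{fact:optimality:conditions} supplies the centering: $g(\theta^\star)=\text{P}_{A^\star_a}\nabla f(\theta^\star)=0$. Next, Assumption~\ref{assump:SO}\ref{assump:SO:twice:differentiable} guarantees that $g$ is continuously differentiable in a neighborhood of $\theta^\star$ with Jacobian $J:=\text{P}_{A^\star_a}\nabla^2 f(\theta^\star)$. Combining the hypothesized CLT \eqref{eq:LB:CLT} for $\bar\theta_n$ with the delta method yields
\begin{equation*}
\sqrt{n}\,\text{P}_{A^\star_a}\nabla f(\bar\theta_n)\;\overset{d}{\to}\;\mathcal N\!\left(0,\;J\,Q^\dagger\,\text{Var}_{X\sim\mathbb P}[G(\theta^\star,X)]\,Q^\dagger\,J^\top\right).
\end{equation*}

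The remaining step, and the main computational point, is to verify the identity
\begin{equation*}
J\,Q^\dagger \;=\; \text{P}_{A^\star_a}\nabla^2 f(\theta^\star)\,Q^\dagger \;=\; \text{P}_{A^\star_a},
\end{equation*}
so that the asymptotic covariance collapses to $\text{P}_{A^\star_a}\text{Var}[G(\theta^\star,X)]\text{P}_{A^\star_a}=\text{Var}[\text{P}_{A^\star_a}G(\theta^\star,X)]$, which is the claim. Writing $P:=\text{P}_{A^\star_a}$ and $H:=\nabla^2 f(\theta^\star)\in\mathbb S_{++}$, the key observation is that the range of $Q=PHP$ equals $\text{Range}(P)$: the inclusion $\text{Range}(Q)\subseteq\text{Range}(P)$ is immediate, while any $w\in\text{Range}(P)$ with $Qw=0$ satisfies $0=\langle w,Qw\rangle=\langle Pw,HPw\rangle=\langle w,Hw\rangle$, forcing $w=0$ by positive definiteness of $H$. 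Since $Q$ is symmetric, $\text{Range}(Q^\dagger)=\text{Range}(Q)=\text{Range}(P)$, so $PQ^\dagger=Q^\dagger$, and the defining property of the pseudoinverse gives $QQ^\dagger=P$. Therefore
\begin{equation*}
PHQ^\dagger \;=\; PH(PQ^\dagger) \;=\; (PHP)Q^\dagger \;=\; QQ^\dagger \;=\; P,
\end{equation*}
which is the desired identity.

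The only potential obstacle is this algebraic simplification of the covariance; once the range identity $\text{Range}(Q)=\text{Range}(P)$ is in hand, everything else reduces to a direct application of the delta method. No additional regularity beyond Assumptions~\ref{assump:SO}\ref{assump:SO:twice:differentiable}--\ref{assump:SO:unique} is required, since positive definiteness of $\nabla^2 f(\theta^\star)$ is exactly what makes $Q$ invertible on $\text{Range}(P)$ and validates the pseudoinverse manipulations above.
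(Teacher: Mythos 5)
Your proposal is correct and follows essentially the same route as the paper: delta method applied to $\theta\mapsto \text{P}_{A^\star_a}\nabla f(\theta)$ (centered via Fact~\ref{fact:optimality:conditions}), followed by the key identity that $\text{Q}\,\text{Q}^\dagger=\text{P}_{A^\star_a}$, which you establish through the same range/kernel argument based on positive definiteness of $\nabla^2 f(\theta^\star)$ (the paper phrases it as $\text{Ker}(\text{Q})\subseteq\text{Ker}(\text{P}_{A^\star_a})$, you phrase it as $\text{Range}(\text{Q})=\text{Range}(\text{P}_{A^\star_a})$, which is the same fact by symmetry). The only cosmetic difference is that you simplify the Jacobian factor $\text{P}_{A^\star_a}\nabla^2 f(\theta^\star)\,\text{Q}^\dagger$ directly, whereas the paper first writes the delta-method covariance with $\text{Q}$ on both sides; both reduce to $\text{P}_{A^\star_a}\text{Var}[G(\theta^\star,X)]\text{P}_{A^\star_a}$ by the same pseudoinverse manipulations.
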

\begin{proof}
Since the gradient function $\nabla f$ is continuously differentiable in a neighborhood of $\theta^\star$ (by Assumption~\ref{assump:SO}\ref{assump:SO:twice:differentiable}) and using the facts $\nabla^2 f(\theta^\star) \in \mb S_{++}$ and $\text{P}_{A^\star_{a}} \nabla f(\theta^\star) = 0$, the delta method can be employed to recover the following CLT:
\[
  \sqrt{n}\, \text{P}_{A^\star_{a}}  \nabla f(\bar{\theta}_n) \overset{d}{\to} \mathcal N \left(0, \text{Q} \text{Q}^{\dagger} \text{Var}_{X \sim \mathbb P}\left[G(\theta^\star,X) \right] \text{Q}^{\dagger} \text{Q} \right),
\]
with $\text{Q}:=\text{P}_{A^\star_{a}}\nabla^2  f(\theta^\star) \text{P}_{A^\star_{a}}$. We will now show that $\text{Q}\text{Q}^\dagger = \text{P}_{A^\star_{a}}$. We start by stating three facts. First, since $\text{P}_{A^\star_a}$ is an orthogonal projection matrix, we have that $\text{P}_{A^\star_a} \text{Q}^{\dagger} = \text{Q}^\dagger = \text{Q}^\dagger \text{P}_{A^\star_a}$. Secondly, from the definition of the Moore-Penrose inverse, we have that $\text{Q} \text{Q}^{\dagger} \text{Q} = \text{Q}$. Thirdly, observe that $\text{Im}(\text{Q}) = \text{Im}(\text{P}_{A^\star_a})$. Indeed, it is clear that $\text{Im}(\text{Q}) \subseteq \text{Im}(\text{P}_{A^\star_a})$. We will now prove the converse inclusion. Since both $\text{P}_{A^\star_{a}}$ and $\text{Q}$ are symmetric, by the fundamental theorem of linear algebra it is enough to prove that $\text{Ker}(\text{Q}) \subseteq \text{Ker}(\text{P}_{A^\star_a})$. For any $v \in \text{Ker}(\text{Q})$, we have that $v^\top \text{Q} v = v^\top \text{P}_{A^\star_{a}}\nabla^2  f(\theta^\star) \text{P}_{A^\star_{a}} v = 0$ which immediately implies that $\text{P}_{A^\star_{a}} v = 0$ using the fact that $\nabla^2  f(\theta^\star)$ is positive definite (by Assumption~\ref{assump:SO}\ref{assump:SO:unique}). 

We are now ready to prove that $\text{Q}\text{Q}^\dagger = \text{P}_{A^\star_{a}}$. For any $v \in \mathbb R^s$, let $v'  \in \mathbb R^s$ be such that $\text{P}_{A^\star_a} v = \text{Q} v'$. Then, the previous three facts guarantee that the chain of equalities $\text{Q} \text{Q}^{\dagger} v = \text{Q} \text{Q}^{\dagger} \text{P}_{A^\star_a} v = \text{Q} \text{Q}^{\dagger} \text{Q} v' = \text{Q} v' = \text{P}_{A^\star_a}v$ holds. This shows that $\text{Q} \text{Q}^{\dagger} v = \text{P}_{A^\star_{a}} v$ for all $v \in \mathbb R^s$, which implies that $\text{Q}\text{Q}^\dagger = \text{P}_{A^\star_{a}}$. The equality $\text{Q}^\dagger\text{Q} = \text{P}_{A^\star_{a}}$ can be proven analogously. 

Finally, \eqref{eq:NDA:CLT} follows from the equality $\text{Var}_{X \sim \mathbb P}\left[\text{P}_{A^\star_{a}} G(\theta^\star,X) \right] = \text{P}_{A^\star_{a}} \text{Var}_{X \sim \mathbb P}\left[G(\theta^\star,X) \right] \text{P}_{A^\star_{a}}$. This concludes the proof.
\end{proof}

Under suitable uniform integrability conditions, the CLTs~\eqref{eq:LB:CLT} and \eqref{eq:NDA:CLT} immediately imply that the following two limits (convergence of second moments) hold:
\begin{equation}
\label{eq:trace-identity}
    \lim_{n\to\infty} n \mathbb E \left[ ||A^\star_a\bar\theta_n-b^\star_a||_2^2 \right]=0, \quad \lim_{n\to\infty} n \mathbb E \left[ ||\text{P}_{A^\star_{a}} \nabla f(\bar \theta_n)||_2^2 \right] = \trace\left( \text{Var}_{X \sim \mathbb P}\left[\text{P}_{A^\star_{a}} G(\theta^\star,X)\right] \right).
\end{equation}
These limits highlight that any stochastic approximation iteration which hopes to attain the performance lower bound in equation~\eqref{eq:LB:CLT} must ``quickly'' identify the active constraints, in the sense that the residual norm $\|A^\star_a\bar\theta_n-b_a\|$ must decay to zero (in expectation) faster than $1/\sqrt{n}$. However, the gradient component in the nullspace associated with the active constraints, i.e., $\text{P}_{A^\star_{a}} \nabla {f}(\bar{\theta}_n)$, may decay at order $1/\sqrt{n}$. Consequently, an optimizer aiming for iterates with small (asymptotic) gradient norm should prefer stochastic gradients for which the trace of the projected variance $\trace\left(\text{Var}_{X \sim \mathbb P}\left[\text{P}_{A^\star_{a}} G(\theta^\star,X)\right]\right)=\trace\left(\text{P}_{A^\star_{a}} \text{Var}_{X \sim \mathbb P}\left[G(\theta^\star,X)\right] \text{P}_{A^\star_{a}}\right)$ is small. This observation will be instrumental in the next section, where importance sampling techniques will be introduced with the aim of directly reducing this quantity. \qed

\begin{remark}[Sample average approximation]
\label{remark:SAA}
An alternative approach to the stochastic approximation algorithms presented so far is the sample average approximation (SAA)
\begin{align}
  \label{eq:saa}
    \theta_n = \argmin_{\theta \in \Theta} \frac{1}{n} \sum_{i=1}^n F(\theta,X_i).
\end{align}
The SAA procedure is asymptotically optimal as well, enjoying the CLT \eqref{eq:LB:CLT} (see \cite[Theorem~3.3]{shapiro1989asymptotic}). However, for modern large scale problems, online stochastic gradient methods are generally preferred due to their superior computational efficiency. Indeed, equation \eqref{eq:saa} still requires the solution of an optimization problems whose complexity grows with increasing sample size. \hfill $\clubsuit$
\end{remark}


\section{Importance Sampling}
\label{sec:importance-sampling}

As discussed earlier, stochastic gradient methods are based on the definition 
\(
G(\theta, X) \defn \nabla_{\theta}F(\theta,X),
\)
so that $\nabla f(\theta) = \mathbb E_{X \sim \mathbb P}\left[ G(\theta, X) \right]$.
However, such stochastic gradients may have very large variance, leading to practical inefficiency.
This is often the case in optimization problems involving rare events, where the expected loss to be minimized may be determined by extreme events that occur infrequently, but that are associated with very large costs.
With naïve Monte-Carlo simulation, such events are infrequently observed, and hence the associated stochastic gradient estimators display high variance, and consequently poor practical performance.

Importance sampling (IS) is a variance reduction technique \citep{glasserman2004monte} which can be employed in the selection of better stochastic gradients. Such technique is based on the following observation:
\begin{align}
\label{eq:IS}   
    \nabla  f(\theta) = \mathbb E_{X \sim \mathbb P}\left[ G(\theta,X)  \right] = \mathbb E_{X^{(\IS)} \sim \mathbb P_{\IS}}\left[ G_{\IS}(\theta, X^{(\IS)}) := G(\theta,X^{(\IS)}) \frac{\d \mb P}{\d \mb P_{\IS}}(X^{(\IS)}) \right],
\end{align}
where ${\mathrm{d} \mathbb P}/{\mathrm{d} \mathbb P_{\IS}}:\mc X\to\Re_+$ represents the Radon-Nicodym derivative of the original distribution $\mathbb P$ with respect to the importance sampling distribution $\mathbb P_{\IS}$. We remark that this derivative exists as long as $\mathbb P\ll \mathbb P_{\IS}$ \cite{nikodym1930sur}, and unlike \citet{Bardou2009computing} we do not require the two distributions to be absolutely continuous with respect to the Lebesgue measure (i.e., admit a density function  on $\Re^r$).
In the particular case where the support set $\mc X$ of $\mathbb P$ is finite (e.g., in empirical risk minimization problems), the absolute continuity requirement reduces to the condition $\mathbb P_{\IS}(x)>0$ for all $x\in \mc X$. 
In view of Lemma \ref{lemma:kkt-residual} and Equation (\ref{eq:trace-identity}), an IS distribution $\mathbb P_{\IS}$ is good if 
\[
  \trace \left(\text{Var}_{X^{(\IS)} \sim \mathbb P_{\IS}}[\text{P}_{A^\star_{a}} G_{\IS}(\theta^\star,X^{(\IS)})] \right) < \trace \left(\text{Var}_{X \sim \mathbb P}\left[\text{P}_{A^\star_{a}} G(\theta^\star,X)\right]\right).
\]
Let $\mc P_{\IS}$ denote the probability simplex of probability measures on $\mc X$ with respect to which $\mathbb P$ is absolutely continuous. Then, the optimal IS distributon can be identified as
\begin{equation*}
  \begin{split}
    \mathbb P_\star \in \arg\min & ~\trace\left(\text{Var}_{X^{(\IS)} \sim \mathbb P_{\IS}}\left[\text{P}_{A^\star_{a}} G(\theta^\star,X^{(\IS)}) \frac{\d \mb P}{\d \mb P_{\IS}}(X^{(\IS)})\right]\right) \\
    \st & ~\,\mb P_{\IS} \in \mc P_{\IS}.
  \end{split}
\end{equation*}
Simple calculations reveal that the optimal IS distribution can be explicitly characterized through
\begin{equation}
\label{eq:opt:IS}
    \mathbb P_\star(\mc E) \defn \textstyle\tfrac{\int_{\mc E}\norm{\text{P}_{A^\star_{a}} G(\theta^\star, x)}^2 \, \mathrm{d} \mathbb P(x)}{\int \norm{\text{P}_{A^\star_{a}} G(\theta^\star, x)}^2 \, \mathrm{d} \mathbb P(x) } 
\end{equation}
for every measurable event set $\mc E\subseteq \mc X$.
However, for an IS distribution $\mathbb{P}_{\IS}$ to be useful, it is crucial that it can be sampled from efficiently, and that the associated likelihood ratio ${\mathrm{d} \mathbb{P}}/{\mathrm{d} \mathbb{P}_{\IS}}$ is known. Clearly, this is not the case for the optimal distribution $\mathbb P_\star$ in Equation~(\ref{eq:opt:IS}), even if the distribution $\mathbb P$ is discrete and supported on a finite number of point, since the active constraints and the optimal solution are not known.
To address this shortcoming, we will therefore consider a restricted family of IS distributions $\mathbb P_\mu$, parametrized by a parameter $\mu$ which lives in a closed and convex set $\mathcal M:=\{\mu \in \mathbb R^m:\, C \mu \leq d\}$, for some technology matrix $C$ and budget vector $d$. We denote the associated likelihood ratio by
\begin{align*}
    \ell(x, \mu)\defn {\mathrm{d} \mathbb P}/{\mathrm{d} \mathbb P_{\mu}}(x),
\end{align*}
and stochastic gradients by $G_{\mu}(\theta, x) \defn \ell(x, \mu) G(\theta, x)$. Moreover, for the IS class to be well behaved, we impose the following assumptions on the function $\ell$, which are similar to those considered in \citep{Bardou2009computing}.

\begin{assumption}[Importance Sampling I]~
\label{assump:IS:class}
\begin{enumerate}[label=(\roman*)]
    \item \label{assump:IS:logconcave} For any $x \in \mathbb R^r$, the function $\mu \to \ell(x,\mu)$ is logarithmically convex.
    \item \label{assump:IS:differntiable} For any $x \in \mathbb R^r$, the function $\mu \to \ell(x,\mu)$ is differentiable. 
\end{enumerate}
\end{assumption}

\noindent We now briefly discuss three important IS classes for which Assumption \ref{assump:IS:class} holds.

\subsection{Exponential Tilting}
\label{sec:example:ECM}
Perhaps the most important IS distribution class is the exponential tilting (ET), defined as
\begin{equation}
\label{eq:exponential:IS}
    \ell_{\ET}(x,\mu) = \exp(-\mu^\top x + \phi(\mu)),
\end{equation}
with $\phi$ being the associated cumulant-generating function defined as $\phi(\mu) = \log \mathbb E_{X \sim \mathbb P}[ \exp(\mu^\top X) ]$.
Note that Assumption \ref{assump:IS:class}\ref{assump:IS:logconcave} is immediately satisfied given that the cumulant-generating function is a convex function.
This family of IS distributions is consistent with a natural exponential family
\[
  \mc P_{\ET} = \set{\mb P_\mu\in \mc P_{\IS}}{\exists \mu\in \mc M, ~\mb P_\mu(\mc E) = \int_{\mc E}  \exp(\mu^\top X - \phi(\mu)) \d \mb P, ~~\forall \mc E\subseteq \mc X}.
\]

In certain cases, the IS distribution $\mathbb P_\mu$ belongs to the same parametric family as $\mathbb P$. This is the case for example when the original density belongs to the exponential family of distributions (e.g., normal, exponential, Poisson, chi-squared, etc.), simplifying random variable generation during Monte Carlo simulations. The exponential tilting has also proven to be fundamental in the context of rare-event simulation and large deviation theory, where often it is the unique efficient simulation distribution choice (see \cite[Chapter~$5.2$]{bucklew2004introduction}). The following lemma states that Assumption \ref{assump:IS:class}\ref{assump:IS:differntiable} is also satisfied. Its proof follows immediately from Lebesgue's dominated convergence, and is therefore omitted.

\begin{lemma}[Exponential tilting]
\label{lemma:ECM:assump:IS}
Let the cumulant-generating function satisfy $\phi(\mu)<\infty$ for all $\mu \in \mathbb R^m$. 
Then the function $\ell_{\ET}$ satisfies Assumption~\ref{assump:IS}\ref{assump:IS:differntiable} with gradient
\begin{align*}
    \nabla_\mu \ell_{\ET}(x,\mu) = (\nabla \phi(\mu)-X)e^{-\mu^\top X + \phi(\mu)},
\end{align*}
where $\nabla_\mu \phi(\mu) = \mathbb E_{X \sim \mathbb P} \left[ X e^{\,\mu^\top X} \right]/\mathbb E_{X \sim \mathbb P} \left[e^{\,\mu^\top X} \right]$.
\end{lemma}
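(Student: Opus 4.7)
The plan is to reduce the problem to differentiating the cumulant-generating function $\phi$ and then apply the chain rule. Since $\ell_{\ET}(x,\mu) = \exp(-\mu^\top x + \phi(\mu))$ is the composition of $\exp$ with $\mu \mapsto -\mu^\top x + \phi(\mu)$, and $\mu \mapsto -\mu^\top x$ is linear (hence smooth), $\ell_{\ET}(x,\cdot)$ will be differentiable with the stated gradient as soon as I can show that $\phi$ is differentiable with
\[
    \nabla \phi(\mu) = \mathbb E_{X \sim \mathbb P}[X e^{\mu^\top X}]\,/\,\mathbb E_{X \sim \mathbb P}[e^{\mu^\top X}].
\]

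To establish this, I would differentiate $M(\mu) := \mathbb E_{X \sim \mathbb P}[e^{\mu^\top X}]$ directly. The difference quotient in direction $e_i$ is
\[
    \frac{M(\mu + h e_i) - M(\mu)}{h} = \mathbb E_{X \sim \mathbb P}\!\left[\frac{e^{(\mu + h e_i)^\top X} - e^{\mu^\top X}}{h}\right],
\]
whose integrand converges pointwise to $X_i e^{\mu^\top X}$ as $h \to 0$, so the main task is to invoke Lebesgue's dominated convergence. The mean value theorem rewrites the integrand as $X_i e^{(\mu + \xi h e_i)^\top X}$ for some $\xi \in [0,1]$, which for $|h| \leq \delta$ is bounded in absolute value by $|X_i|\bigl(e^{(\mu + \delta e_i)^\top X} + e^{(\mu - \delta e_i)^\top X}\bigr)$. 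Absorbing the polynomial factor $|X_i|$ into the exponentials via the elementary bound $|y| \leq e^y + e^{-y}$ produces a dominating function of the form $\sum_{\nu \in \mathcal N} e^{\nu^\top X}$ for finitely many $\nu \in \mathbb R^m$, each term of which is integrable by the standing hypothesis $\phi(\nu) < \infty$ for all $\nu \in \mathbb R^m$. Dominated convergence then delivers $\partial_i M(\mu) = \mathbb E_{X \sim \mathbb P}[X_i e^{\mu^\top X}]$. Since $M(\mu) > 0$ everywhere, the chain rule gives the stated formula for $\nabla \phi$, and a further application of the chain rule to $\exp(-\mu^\top x + \phi(\mu))$ yields the claimed expression for $\nabla_\mu \ell_{\ET}$.

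The only delicate point is the construction of the dominating function, and this is precisely where the strong hypothesis $\phi(\mu) < \infty$ for \emph{all} $\mu \in \mathbb R^m$ (not merely in a neighborhood of a given point) is used: it guarantees that we may freely shift $\mu$ by any vector of the form $\pm\delta e_i \pm e_i$ while retaining integrability of the corresponding exponential moment. Everything else is a routine chain-rule calculation, which is presumably why the authors elect to omit the proof.
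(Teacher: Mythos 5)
Your argument is correct and is a fleshed-out version of exactly the route the paper alludes to (the paper states that the proof ``follows immediately from Lebesgue's dominated convergence'' and omits it). One minor point worth tightening: dominated convergence as you apply it yields existence of the \emph{partial} derivatives of $M(\mu) = \mathbb E_{X\sim\mathbb P}[e^{\mu^\top X}]$, whereas the lemma asserts (Fr\'echet) differentiability of $\mu\mapsto\ell_{\ET}(x,\mu)$; this is easily repaired either by a second dominated-convergence step showing the partials are continuous (the same dominating function works), or more cheaply by observing that $\phi$ is a finite convex function on all of $\mathbb R^m$, for which existence of all partial derivatives at a point already implies differentiability there.
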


The following example serves as an illustrative case to demonstrate how exponential tilting within an importance sampling scheme can transform an estimator with exponentially large asymptotic variance into one with a uniformly bounded variance. While the setting is intentionally simple, it highlights the potential of such techniques and motivates the more general methodology developed in this paper.

\begin{example}[Normal Quantile Estimation]
\label{example:nqe}
Let $X$ be a standard normal random variable. Its $\alpha$-th quantile $\theta^\star$ can be characterized as the minimum of the stochastic optimization problem
\(
 \min_{\theta\in \mathbb R}\,f(\theta)=\mathbb E_{X\sim \mathcal N(0, 1)}\left[ \alpha \theta + \max\{X - \theta, 0\} \right].
\) 
This follows immediately from the optimality condition $\nabla f(\theta^\star)=0$, which reduces to $\alpha - \mathbb E_{X\sim \mathcal N(0, 1)}\left[ G(\theta, X) = \one{X \geq \theta} \right] = 0$.
We remark that the Hessian at the optimal solution is given as $\nabla^2 f(\theta^\star) = p(\theta^\star)$, where $p(x) = \tfrac{\exp(-x^2/2)}{\sqrt{2\pi}}$. 
Furthermore, we have that $\text{Var}_{X \sim \mathbb P}\left[G(\theta^\star,X)\right] = \alpha(1-\alpha)$. Assume now that $\alpha < 1/2$, and hence $\theta^\star>0$ and $\text{Var}_{X \sim \mathbb P}\left[G(\theta^\star,X)\right]\geq \alpha/2$.
Following equation \eqref{eq:PR-SA:CLT}, the PR-SA iteration scheme based on standard stochastic gradients would achieve the CLT,
$\sqrt{n}( \bar{\theta}_n - \theta^\star ) \overset{d}{\to} \mathcal N \left(0, \sigma^2  \right),$
with asymptotic variance $$\sigma^2 \geq \sqrt{2\pi}\exp(\theta^{\star 2}/2)/(2(\theta^\star + 1/\theta^\star)),$$ which follows from the standard normal tail inequality $\mathbb P[X\geq \theta^\star] \geq \exp(-\theta^{\star 2}/2)/((\theta^\star + 1/\theta^\star)\sqrt{2\pi})$, for $\theta^\star>0$. Clearly, when $\alpha$ is small, the quantile $\theta^\star$ is large and the asymptotic variance of any estimator based on standard stochastic gradients is exponentially large. 

Consider now an IS scheme in which $X^{(\mu)} \sim \mathbb P_\mu$, where the distribution $\mathbb P_\mu=\mathcal N(\mu, 1)$ is associated with $\ell_\text{ET}(x, \mu) ={\mathrm{d} \mathbb P}/{\mathrm{d} \mathbb P_\mu}(x)= \exp(-\mu x + \mu^2/2)$, which is consistent with an exponential tilting with natural parametrization.
Indeed, as the natural sufficient statistic is $S(x)=x$, Equation~(\ref{eq:exponential:IS}) reduces to
\(
  \ell_{\ET}(x,\mu) = \exp(-\mu x + \phi(\mu))
\)
with $\phi(\mu) = \log \mathbb E_{X \sim \mathbb P}[ \exp(\mu^\top X) ] = \mu^2/2$. The variance of the stochastic gradients can be bounded by
\begin{align*}
 \text{Var}_{X^{(\mu)} \sim \mathbb P_\mu} &\left[G_\mu(\theta^\star,X^{(\mu)})\right]  = \mathbb E_{X^{(\mu)} \sim \mathbb P_\mu}\left[G_\mu(\theta^\star,X^{(\mu)})^2\right] - \nabla f(\theta^\star)^2
= \mathbb E_{X \sim \mathbb P}\left[G(\theta^\star,X)^2 \ell_{\text{ET}}(x, \mu)\right]\\
& = \mathbb E_{X \sim \mathbb P}\left[\one{X\geq \theta^\star} \ell_{\text{ET}}(x, \mu)\right]
= \int_{\theta^\star}^\infty \exp(-\mu x + \mu^2/2) \exp(-x^2/2)/\sqrt{2\pi} \; \text{d} x\\
& =  \exp(\mu^2/2) \int_{\theta^\star}^\infty \exp(-\mu x) \tfrac{\exp(-x^2/2)}{\sqrt{2\pi}} \; \text{d} x\\
& \leq \exp(\mu^2/2) \int_{\theta^\star}^\infty \exp(-\mu \theta^\star) \tfrac{\exp(-x^2/2)}{\sqrt{2\pi}} \; \text{d} x\\
& = \exp(\mu^2/2-\mu \theta^\star) \int_{\theta^\star}^\infty \tfrac{\exp(-x^2/2)}{\sqrt{2\pi}} \; \text{d} x \leq \frac{1}{2} \exp(-\theta^{\star 2}) \exp((\theta^\star-\mu)^2/2).
\end{align*}
The first inequality follows from $\mu\geq 0$ and the fact that the exponential is an increasing function.
The final inequality follows from the standard tail inequality
$\mathbb P[X\geq \theta^\star] \leq \exp(-\theta^{\star 2}/2)/2$.
Following equation \eqref{eq:PR-SA:CLT}, the PR-SA iteration scheme based on the IS stochastic gradients would achieve the CLT
\(
\sqrt{n}\left( \bar{\theta}_n - \theta^\star \right) \overset{d}{\to} \mathcal N \left(0, \sigma(\mu)^2  \right)
\)
with
\(
\sigma^2(\mu) \leq  \exp((\theta^\star-\mu)^2/2)/2.
\)
In particular, we remark that the best importance sampler in the considered IS family has a bounded variance independent of $\alpha$, i.e.,
\(
\min_{\mu\geq 0}\,\sigma(\mu)^2  = \sigma(\theta^\star)^2 \leq 1/2.
\) \hfill $\clubsuit$
\end{example}


\subsection{Mean Translation}
\label{sec:example:MT}

Another important IS distribution class is represented by the translation family of a log-concave base distribution.
In this context, the base probability measure $\mb P$ is assumed to admit a density function with respect to the Lebesgue measure $\mb L$ on $\mc X = \Re^r$, which is strictly positive, i.e., $p(x)>0$ for all $x\in \Re^r$. We require the base distribution $\mb P$ to be log-concave, i.e., its density function $p(x) \defn \tfrac{\d \mb P}{\d \mb L}(x)$ is such that $\Delta(x) := -\log(p(x))$ is a convex function.
Then, the associated IS distributions $\mathbb P_\mu$ are characterized by the translated densities $p_\mu(x) \defn \tfrac{\d \mb P_\mu}{\d \mb L}(x) = p(x-\mu)$, and together define the translation family $\mc P_{\MT}\defn \set{\mathbb P_\mu}{\mu\in \mathbb R^r}$. Sampling from $\mathbb P_\mu$  having access to a samples from the base measure $\mb P$ is trivial and hence this family is practical whenever
\begin{equation}
  \label{eq:mt:derivative}
  \ell_{\MT}(x, \mu) \defn \frac{\d \mb P}{\d \mb P_\mu}(x) = \frac{p(x)}{p_\mu(x)} = \frac{\exp(-\Delta(x))}{\exp(-\Delta(x-\mu))}=\exp(-(\Delta(x)-\Delta(x-\mu)))
\end{equation}
can be evaluated efficiently. Such translations of the density function are generally used to place more probability mass in a rare event region, and have been successfully employed in the context of digital communication systems \cite{bucklew2004introduction}.

It is clear from Equation (\ref{eq:mt:derivative}) and the log-concavity of the base probability measure $\mb P$ that the derivative function $\ell_{\MT}$ associated with this class of IS distributions satisfies Assumption \ref{assump:IS:class}\ref{assump:IS:logconcave}.
It remains to verify Assumption \ref{assump:IS:class}\ref{assump:IS:differntiable}. The following well known result is stated without proof.

\begin{lemma}[Mean translation]
\label{lemma:MT:assump:IS}
Let $\nabla p(x)$ exist for all $x\in \Re^r$. Then the function $\ell_{\MT}$ satisfies Assumption~\ref{assump:IS:class}\ref{assump:IS:differntiable} with gradient given as
\[
  \nabla_\mu \ell_{\MT}(x,\mu) = \exp(-(\Delta(x)-\Delta(x-\mu)))\frac{\nabla p(x-\mu)}{p(x-\mu)}.
\]
\end{lemma}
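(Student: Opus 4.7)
The plan is to verify the two claims (differentiability and the stated gradient formula) by a direct chain-rule calculation, using only the standing hypotheses that $p$ is strictly positive on $\mathbb{R}^r$ and that $\nabla p(x)$ exists for every $x\in\mathbb{R}^r$.

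First, I would rewrite $\ell_{\MT}$ in a form that isolates the $\mu$-dependence. From the definition $\Delta(y)=-\log p(y)$, equation \eqref{eq:mt:derivative} gives
\[
\ell_{\MT}(x,\mu)=\exp\bigl(-\Delta(x)\bigr)\exp\bigl(\Delta(x-\mu)\bigr),
\]
so only the second factor depends on $\mu$. Since $p>0$ everywhere, $\Delta$ is well-defined and finite on $\mathbb{R}^r$, and because $\nabla p$ exists everywhere, $\Delta$ is differentiable with $\nabla\Delta(y)=-\nabla p(y)/p(y)$; hence $\mu\mapsto\Delta(x-\mu)$ is differentiable for every fixed $x$, which proves Assumption~\ref{assump:IS:class}\ref{assump:IS:differntiable}.

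Next, I would apply the chain rule to $\mu\mapsto\Delta(x-\mu)$. Writing $g(\mu):=x-\mu$, the Jacobian of $g$ is $-I$, so
\[
\nabla_\mu\bigl[\Delta(x-\mu)\bigr]=-\nabla\Delta(x-\mu)=\frac{\nabla p(x-\mu)}{p(x-\mu)}.
\]
Combining this with the product form of $\ell_{\MT}$ and using $\exp(-\Delta(x))\exp(\Delta(x-\mu))=\exp\bigl(-(\Delta(x)-\Delta(x-\mu))\bigr)$ gives exactly the claimed expression
\[
\nabla_\mu \ell_{\MT}(x,\mu)=\exp\bigl(-(\Delta(x)-\Delta(x-\mu))\bigr)\,\frac{\nabla p(x-\mu)}{p(x-\mu)}.
\]

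There is no real obstacle here: the only subtle points are that $p>0$ guarantees we never divide by zero, and that the differentiability of $p$ at every point (rather than just $\mathbb{P}$-almost everywhere) is what allows us to invoke the chain rule pointwise in $x$. No dominated-convergence argument is needed because the derivative is computed pathwise in $x$, not under an integral sign.
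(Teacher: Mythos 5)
Your proposal is correct: the paper states this lemma without proof (calling it well known), and your direct chain-rule computation—writing $\ell_{\MT}(x,\mu)=\exp(-\Delta(x))\exp(\Delta(x-\mu))$, differentiating $\mu\mapsto\Delta(x-\mu)$ with Jacobian $-I$, and using $\nabla\Delta(y)=-\nabla p(y)/p(y)$ (legitimate since $p>0$ everywhere)—is exactly the intended argument. Your closing observation is also on point: Assumption~\ref{assump:IS:class}\ref{assump:IS:differntiable} only requires differentiability in $\mu$ pointwise in $x$, so no dominated-convergence step is needed here, unlike in the exponential-tilting case where the cumulant-generating function sits under an expectation.
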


Similarly to Example~\ref{example:nqe}, the following example illustrates how mean translation in importance sampling can reduce exponentially large asymptotic variance to a uniformly bounded level.

\begin{example}[Exponential Quantile Estimation]
Assume that $X$ is instead a standard exponential random variable, whose distribution we denote by $\mathcal E(0,1)$. Its $\alpha$-th quantile $\theta^\star$ can be characterized as the minimum to the stochastic optimization problem
\(
\min_{\theta\in \mathbb R}\,f(\theta)=\mathbb E_{X\sim \mathcal E(0, 1)}\left[ \alpha X + \max(X - \theta, 0) \right].
\)
The Hessian at the optimal solution is given by $\nabla^2 f(\theta^\star) = p(\theta^\star)$, with $p(x) = \tfrac{\exp(-|x|)}{2}$. 
Again, we have that $\text{Var}_{X \sim \mathbb P}\left[G(\theta^\star,X)\right] = \alpha(1-\alpha)$. Assume now that $\alpha < 1/2$, and hence $\theta^\star>0$ and $\text{Var}_{X \sim \mathbb P}\left[G(\theta^\star,X)\right]\geq \alpha/2$.
Following equation \eqref{eq:PR-SA:CLT}, a standard optimal iteration scheme based on standard stochastic gradients would achieve the CLT
\(
\sqrt{n}\left( \bar{\theta}_n - \theta^\star \right) \overset{d}{\to} \mathcal N \left(0, \sigma^2  \right),
\)
with asymptotic variance $$\sigma^2 \geq \frac{1}{2} \exp(-\theta^\star) 4 \exp(2|\theta^\star|)=2\exp(\theta^\star)$$ where we employed the identity $$\mathbb P[X\geq \theta^\star] = \int^\infty_{\theta^\star}  \exp(-x) \; \text{d} x =\exp(-\theta^\star)$$ for $\theta^\star>0$. Clearly, when $\alpha$ tends to zero the quantile $\theta^\star$ tends to infinity and the asymptotic variance of any estimator based on standard stochastic gradients explodes.

Consider now an IS scheme in which $X^{(\mu)} \sim \mathbb P_\mu$ with distribution $\mathbb P_\mu=\mathcal E(\mu, 1)$. Consequently, we have that $\ell_{\MT}(x, \mu) ={\mathrm{d} \mathbb P}/{\mathrm{d} \mathbb P_\mu}(x)= \exp(-(|x|-|x-\mu|))$. The variance of the stochastic gradients can be bounded by
\begin{align*}
\text{Var}_{X^{(\mu)} \sim \mathbb P_\mu}\left[G_\mu(\theta^\star,X^{(\mu)})\right] & = \mathbb E_{X^{(\mu)} \sim \mathbb P_\mu}\left[G_\mu(\theta^\star,X^{(\mu)})^2\right] - \nabla f(\theta^\star)^2
=  \mathbb E_{X \sim \mathbb P}\left[G(\theta^\star,X)^2 \ell(x, \mu)\right]\\
& = \mathbb E_{X \sim \mathbb P}\left[\one{X\geq \theta^\star} \ell(x, \mu)\right]
= \int_{\theta^\star}^\infty \exp(-2|x|+|x-\mu|)/2 \; \text{d} x\\
& = \int_{\theta^\star}^\mu  \exp(-3x+\mu)/2 \; \text{d} x + \int^\infty_\mu\exp(-x-\mu)/2 \; \text{d} x\\
&\leq  \exp(\mu)/2 \int_{\theta^\star}^\infty  \exp(-3x) \; \text{d} x + \exp(-2\mu)/2\\
& = \frac{1}{2}\left(\exp(\mu-3\theta^\star)/3 + \exp(-2\mu)\right).
\end{align*}
Then, following equation \eqref{eq:PR-SA:CLT}, a standard optimal iteration scheme based on standard stochastic gradients would achieve
\(
\sqrt{n}\left( \bar{\theta}_n - \theta^\star \right) \overset{d}{\to} \mathcal N \left(0, \sigma(\mu)^2  \right)
\)
with
\(
\sigma^2(\mu) \leq 2\left(\exp(\mu-\theta^\star)/3 + \exp(2(\theta^\star-\mu))\right).
\)
In particular, we remark that the best importance sampler in the considered IS family has a bounded variance independent of $\alpha$, i.e.,
\(
\min_{\mu\geq 0}\,\sigma(\mu)^2  = \sigma(\theta^\star+\log(6)/3) = \sqrt[3]{6}.
\) \hfill $\clubsuit$
\end{example}

\subsection{Mixture Models}

Finally, consider a setting in which the decision-maker has access to $I\in \mathbb N$ distinct importance samplers. That is, we have
$G_i(\theta, X) = G(\theta, X) \ell_i(X)$ so that, for all $i\in[I]$, we have $\nabla f(\theta) = \mathbb E_{\mathbb P_i}\left[G_i(\theta, X)\right]$ and where we assume that we can sample from each distribution $\mb P_i$ efficiently. Given any $\mu\in\Re^I_+$ such that $\sum_{i=1}^I\mu_i=1$, we can consider the mixture distribution
\(
  \mb P_\mu = \sum_{i=1}^I \mu_i\mathbb P_i
\)
from which we can sample efficiently given access to samples from each of the IS distributions $\mb P_i$, for $i\in [I]$. Such mixture models are considered for instance in \cite[Section 5.2.2]{bucklew2004introduction} and are successfully employed to define efficient important samplers in the context of a large deviation principle. 

We consider in this setting the IS family $\mc P_{\MM} \defn \set{\mb P_\mu}{\exists \mu\in \mc M}$ with IS parameter set $\mathcal M = \tset{\mu\in \mathbb R_+^I}{\sum_{i=1}^I \mu_i=1}$ and associated likelihood ratio
\begin{equation}
  \label{eq:mm:derivative}
  \ell_{\MM}(x, \mu) = \left(\textstyle\sum_{i=1}^I\mu_i \ell_i(x)^{-1}\right)^{-1}.
\end{equation}
It is trivial to verify from equation (\ref{eq:mm:derivative}) that the function $\ell_{\MM}$ associated with this class of IS distributions satisfies Assumption \ref{assump:IS:class}\ref{assump:IS:logconcave}.
It finally remains to verify Assumption \ref{assump:IS:class}\ref{assump:IS:differntiable}.

\begin{lemma}[Mixture models]
\label{lemma:MM:assump:IS}
The function $\ell_{\MM}$ satisfies Assumption~\ref{assump:IS:class}\ref{assump:IS:differntiable} with gradient given as
\[
  \nabla_\mu \ell_{\MM}(x,\mu) =
  -
  \begin{pmatrix}
    \ell_1(x)^{-1}\\
    \vdots\\
    \ell_I(x)^{-1}
  \end{pmatrix}
  \left(\textstyle\sum_{i=1}^I\mu_i \ell_i(x)^{-1}\right)^{-2}.   
\]
\end{lemma}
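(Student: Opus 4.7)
The plan is to recognize that $\ell_{\MM}(x,\mu)$ is simply the reciprocal of an affine function of $\mu$, at which point the result reduces to a one-line chain rule computation.

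First, I would set $g(x,\mu) \defn \sum_{i=1}^I \mu_i\, \ell_i(x)^{-1}$ so that $\ell_{\MM}(x,\mu) = 1/g(x,\mu)$. Being affine in $\mu$, the function $g$ is trivially differentiable on all of $\mathbb R^I$, with
\[
  \nabla_\mu g(x,\mu) = \bigl(\ell_1(x)^{-1},\,\ldots,\,\ell_I(x)^{-1}\bigr)^\top.
\]
Second, I would verify that $g(x,\mu) > 0$ throughout the parameter set $\mc M$, so that the reciprocal $1/g$ is well-defined and smooth in a neighborhood of every admissible $\mu$. This is immediate: each $\ell_i(x)$ is the Radon–Nikodym derivative $\d\mb P/\d\mb P_i$ evaluated at $x$ and is hence strictly positive on the support of $\mb P$, while any $\mu\in \mc M$ satisfies $\mu_i\geq 0$ with $\sum_i\mu_i = 1$, so at least one entry is strictly positive. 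Consequently $g(x,\mu) > 0$.

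Finally, applying the chain rule to the composition of the scalar map $t\mapsto 1/t$ (smooth on $\Re_{++}$) with $g$, one obtains
\[
  \nabla_\mu \ell_{\MM}(x,\mu) \;=\; -\,\frac{\nabla_\mu g(x,\mu)}{g(x,\mu)^2},
\]
which, after substituting the expressions for $\nabla_\mu g$ and $g$, yields exactly the stated formula. There is no genuine obstacle here; the only substantive checks are the strict positivity of $g$, which legitimizes differentiating its reciprocal, and the componentwise chain rule, both of which are elementary. I would therefore expect the written proof to be essentially a single display, with a brief sentence justifying positivity of $g$.
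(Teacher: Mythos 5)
Your proof is correct. The paper states this lemma without giving a proof (treating it as immediate from equation~\eqref{eq:mm:derivative}), and your argument—writing $\ell_{\MM}(x,\mu)$ as the reciprocal of the affine, strictly positive function $g(x,\mu)=\sum_{i=1}^I\mu_i\,\ell_i(x)^{-1}$ and applying the chain rule—is exactly the intended computation, with the positivity check being the only point worth stating explicitly.
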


With this foundation in place, we now explore how to adaptively select importance sampling distributions within stochastic approximation algorithms to minimize asymptotic variance.


\section{Adaptive Importance Sampling}
\label{sec:SO:with:IS}

Given only black-box access to the optimization problem~\eqref{eq:so}, the optimizer has access only to samples from $G(\theta, X)$, where $G(\theta, X) \defn \nabla_{\theta} F(\theta, X)$ and $\nabla f(\theta) = \mathbb{E}_{X \sim \mathbb{P}}\!\left[ G(\theta, X) \right]$. Under this information structure, the NDA iteration proposed in~\cite{duchi2021asymptotic} is optimal in the sense discussed in Section~\ref{sec:sa:with:is}. In this paper we go beyond the black-box model and assume that sampling from an IS class is a viable option.

Following the discussion after Lemma~\ref{lemma:kkt-residual}, if we have access to the optimal IS parameter $\mu^\star$ which minimizes $ \trace\left(\text{Var}_{X^{(\mu)} \sim \mathbb P_\mu}\left[\text{P}_{A^\star_{a}} G_\mu(\theta^\star, X^{(\mu)})\right]\right)$ over $\mathcal M$, then an NDA procedure based on the stochastic gradients $G_{\mu^\star}(\theta,X^{(\mu^\star)})$ would output a sequence of iterates which reduce the residual as fast as possible,
\begin{equation*}
    \lim_{n\to\infty} n \mathbb E \left[ ||\text{P}_{A^\star_{a}} \nabla  f(\bar \theta_n)||_2^2 \right] = \min_{\mu\in \mathcal M} \trace\left(\text{Var}_{X^{(\mu)} \sim \mathbb P_\mu}\left[\text{P}_{A^\star_{a}} G_\mu(\theta^\star, X^{(\mu)})\right]\right).
\end{equation*}
The objective $\trace\left(\text{Var}_{X^{(\mu)} \sim \mathbb P_\mu}\left[\text{P}_{A^\star_{a}} G_\mu(\theta^\star, X^{(\mu)})\right]\right)$ on the right-hand side plays a central role in our analysis and admits a particularly convenient structure. Indeed, using the standard identity
\begin{align*}
  & \text{Var}_{X^{(\mu)} \sim \mathbb P_\mu}\left[\text{P}_{A^\theta_{a}} G_\mu(\theta, X^{(\mu)})\right] 
  = \text{P}_{A^\theta_{a}} \mathbb E_{X \sim \mathbb P}\left[ G(\theta, X) G(\theta, X)^\top  \ell(X, \mu)\right] \text{P}_{A^\theta_{a}} - \text{P}_{A^\theta_{a}} \nabla  f(\theta) \nabla  f(\theta)^\top \text{P}_{A^\theta_{a}},
\end{align*}
and recalling the optimality condition $\text{P}_{A^\star_{a}} \nabla  f(\theta^\star) = 0$ from equation \eqref{eq:kkt-conditions}, we see that this objective function simplifies to an expectation of the form $\mathbb E_{X \sim \mathbb P}\left[V(\theta^\star, \mu, X)\right]$, where 
\begin{align*}
    V(\theta, \mu, X):=\|\text{P}_{A_{a}^\theta}G(\theta,X)\|^2\ell(X,\mu),
\end{align*}
for all $\theta\in \Theta$ and $\mu \in \mathcal M$.
Hence, the optimal IS parameter $\mu^\star$ can be equivalently recovered as the minimum of the following stochastic optimization problem:
\begin{align}
  \label{eq:optimal:IS:2}
  \begin{split}
    \min_{\mu} & ~v(\theta^\star,\mu):=\mathbb E_{X \sim \mathbb P}\left[V(\theta^\star, \mu, X)\right]\\
    \st & ~\mu \in \mathcal M:=\{\mu \in \mathbb R^m:\, C \mu \leq d\}.
  \end{split}
  \tag{IS}
\end{align}
There are two main challenges that arise in tackling this problem:

\medskip
\noindent\textbf{Challenge I.} \emph{The first major challenge is the fact that the optimization problem \eqref{eq:optimal:IS:2} which characterizes the optimal importance sampler is itself a stochastic optimization problem.}

\medskip
Akin to the structural assumptions imposed on problem \eqref{eq:so}, we will impose the following structural assumptions on the \eqref{eq:optimal:IS:2} problem as well.

\begin{assumption}[Importance sampling II]~
\label{assump:IS}
\begin{enumerate}[label=(\roman*)]
    \item \label{ass:IS:unique} Problem~\eqref{eq:optimal:IS:2} admits an unique minimizer $\mu^\star$ with $\nabla^2_\mu v(\theta^\star,\mu)\in \mb S_{++}$.
    
    \item The set $\mathcal M$ is bounded.
    
    \item \label{ass:IS:dominated} For any $\theta \in \Theta$ and $\mu \in \mathcal M$, we have:
    \begin{itemize}
        \item[(iii.1)] \label{ass:IS:dominated:1} $\mathbb E_{X \sim \mathbb P}\left[\,\|G(\theta,X)\|^2\ell(X,\mu)\right] \leq G^2_M < \infty$;

        \item[(iii.2)] \label{ass:IS:dominated:2} $\mathbb E_{X \sim \mathbb P}\left[\,\|\text{P}_{A_{a}^\theta} G(\theta,X)\|^4\|\nabla_\mu \ell(X,\mu)\|^2\right] \leq H^2_M < \infty$.
    \end{itemize}
\end{enumerate}
\end{assumption}

Using the notation introduced in equation \eqref{eq:IS} and below in equation \eqref{eq:gradient:V}, the two conditions in Assumption~\ref{assump:IS}\ref{ass:IS:dominated} are equivalent to $\mathbb E_{X^{(\mu)} \sim \mathbb P_\mu}\left[\,\|G_\mu(\theta,X^{(\mu)})\|^2\right] \leq G^2_M < \infty$ and $\mathbb E_{X \sim \mathbb P}\left[\,\|H(\theta, \mu, X)\|^2\right] \leq H^2_M < \infty$, respectively. Such a second-moment boundedness assumption on the gradients is standard in the analysis of stochastic approximation algorithms; see, e.g., \cite{nemirovski2009robust}. In our setting, this condition is often satisfied due to the compactness of the sets $\Theta$ and $\mathcal M$, for many common choices of the distribution $\mathbb P$. As an example, it can be easily checked that this is the case if $\mathbb P$ is Gaussian and the parameter $\mu$ comes from the exponential tilting IS class.

\begin{lemma}[Convexity and differentiability of $v$]
  \label{lemma:conv:diff:V}
  Let Assumptions~\ref{assump:IS:class}\ref{assump:IS:logconcave}, \ref{assump:IS:class}\ref{assump:IS:differntiable} and \ref{assump:IS}\ref{ass:IS:dominated} be satisfied. Then, for every $\theta$, the function $v(\theta, \mu)\defn \mathbb E_{X \sim \mathbb P}\left[V(\theta, \mu, X)\right]$ is convex and differentiable in $\mu$, with gradient
  \begin{align}
    \label{eq:gradient:V}
    \nabla_\mu v(\theta, \mu)  = \mathbb E_{X \sim \mathbb P}\left[H(\theta, \mu, X):=\|\text{P}_{A_{a}^\theta}G(\theta,X)\|^2\nabla_\mu \ell(X,\mu)\right].
  \end{align}
\end{lemma}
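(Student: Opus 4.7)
The plan is to split the result into a pointwise convexity part and a Leibniz-type interchange of gradient and expectation, with the moment bounds in Assumption~\ref{assump:IS}\ref{ass:IS:dominated} supplying integrable envelopes for the difference quotients.

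For convexity, I would reduce everything to the pointwise statement that $V(\theta,\cdot,x)$ is convex for each $x$. Assumption~\ref{assump:IS:class}\ref{assump:IS:logconcave} makes $\log\ell(x,\cdot)$ convex, so $\ell(x,\cdot)=\exp(\log\ell(x,\cdot))$ is the composition of a convex function with the increasing convex map $\exp$, hence itself convex. The prefactor $\|\text{P}_{A_a^\theta}G(\theta,x)\|^2$ is non-negative and independent of $\mu$, so $V(\theta,\cdot,x)$ inherits convexity, and passage to the expectation preserves it.

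For the gradient formula, I would compute directional derivatives of $v(\theta,\cdot)$ via dominated convergence. Fixing $\mu\in\mathcal{M}$ and a direction $w\in\mathbb{R}^m$ together with $h_0>0$ such that $\mu\pm h_0 w\in\mathcal{M}$, the key observation is that convexity of $\ell(x,\cdot)$ yields, for every $h\in(0,h_0]$, the sandwich
\[
\nabla_\mu \ell(x,\mu)^\top w \;\le\; \frac{\ell(x,\mu+hw)-\ell(x,\mu)}{h} \;\le\; \frac{\ell(x,\mu+h_0 w)-\ell(x,\mu)}{h_0},
\]
where the lower bound is the subgradient inequality at $\mu$ and the upper bound is the standard monotonicity of secant slopes of convex functions. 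Multiplying through by $\|\text{P}_{A_a^\theta}G(\theta,x)\|^2\ge 0$ preserves the sandwich, and both envelopes are $\mathbb{P}$-integrable: the upper envelope has expectation at most $2G_M^2/h_0$ by Assumption~\ref{assump:IS}\ref{ass:IS:dominated}(iii.1), while the lower envelope is controlled by Cauchy--Schwarz combined with Assumption~\ref{assump:IS}\ref{ass:IS:dominated}(iii.2), since
\[
\mathbb{E}\!\left[\|\text{P}_{A_a^\theta}G(\theta,X)\|^2\|\nabla_\mu \ell(X,\mu)\|\right] \;\le\; \sqrt{\mathbb{E}\!\left[\|\text{P}_{A_a^\theta}G(\theta,X)\|^4\|\nabla_\mu \ell(X,\mu)\|^2\right]} \;\le\; H_M.
\]
Dominated convergence then justifies passing $h\downarrow 0$ inside the expectation, yielding the right directional derivative $\mathbb{E}[\|\text{P}_{A_a^\theta}G(\theta,X)\|^2\,\nabla_\mu \ell(X,\mu)^\top w]$. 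A mirrored argument (replacing $w$ by $-w$) handles $h\uparrow 0$, and Assumption~\ref{assump:IS:class}\ref{assump:IS:differntiable} ensures the two one-sided limits coincide. Linearity in $w$ exhibits \eqref{eq:gradient:V}, and Gâteaux differentiability of a convex function upgrades to Fréchet differentiability.

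The delicate step is securing a dominating function uniform in $h$; the convex sandwich resolves this, but it uses every ingredient of the hypotheses. Log-convexity in Assumption~\ref{assump:IS:class}\ref{assump:IS:logconcave} is what makes both envelopes simultaneously available; (iii.1) tames the secant-slope envelope from above, whereas (iii.2) tames the subgradient envelope from below. Removing either moment bound would break the standard dominated-convergence step.
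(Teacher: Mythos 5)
Your proof is correct and follows essentially the same route as the paper: convexity comes from the pointwise convexity of $\mu \mapsto \ell(x,\mu)$ (exponential of a convex function) times a nonnegative $\mu$-independent factor, preserved under expectation, and the gradient formula comes from exchanging differentiation and expectation via dominated convergence using Assumption~\ref{assump:IS}\ref{ass:IS:dominated}. The only difference is that the paper asserts the dominated-convergence step "immediately," whereas you make the dominating envelope explicit through the convex secant/subgradient sandwich—a welcome elaboration of the same argument rather than a different approach.
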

\begin{proof}
  Fix $x \in \mathbb R^r$. Then, from Assumption~\ref{assump:IS:class}\ref{assump:IS:logconcave} we have that $\mu \to \tfrac{\d\mb P}{\d \mb P_\mu}\, (x)$ is log-convex, $\mu \to \log \tfrac{\d\mb P}{\d \mb P_\mu}(x) $ is convex, and therefore via Young's inequality we have that $\mu \to \ell(x, \mu)= \tfrac{\d\mb P}{\d \mb P_\mu}(x)$ is convex. Then, as integration preserves convexity, we also have that $v$ is convex. The differentiability of $v$ and the expression of the gradient follow immediately from Assumptions~\ref{assump:IS:class}\ref{assump:IS:differntiable} and \ref{assump:IS}\ref{ass:IS:dominated}. In particular, the condition $ \mathbb E_{X \sim \mathbb P}\left[\,\|\text{P}_{A_{a}^\theta}G(\theta,X)\|^2\ell(X,\mu)\right] \leq G_M < \infty$ guarantees that Lebesgue's dominated convergence can be applied to exchange expectation and differentiation.
\end{proof}

We may again decompose the technology matrix $C$ and budget vector $d$ into its {active} and {inactive} components as 
\(
C^\star_{a} \mu^\star - d^\star_a=0
\)
and
\(
C^\star_{i} \mu^\star -d^\star_i < 0,
\)
where $\mu^\star$ is the unique optimal solution in problem \eqref{eq:optimal:IS:2}. Similarly to the optimality conditions in Fact~\ref{fact:optimality:conditions} for problem \eqref{eq:so}, the optimal importance sampler parameter $\mu^\star$ is characterized by the optimality conditions
\[
  C^\star_a \mu^\star - d^\star_a=0, \quad \text{P}_{C^\star_a} \nabla {v}(\mu^\star)=0,
\]
with $\text{P}_{C^\star_a}= I - {C^\star_{a}}^\top ({C^\star_{a}} {C^\star_{a}}^\top)^\dagger {C^\star_{a}}$ being the orthogonal projector onto the null space of the active constraints $\{\mu \in \mathbb R^m:\; C^\star_{a} \mu = 0\}$.
In other words, we are looking for a point $\mu^\star$ in the affine subspace associated with our active constraints, for which the gradient is orthogonal to the active constraints; see also Figure \ref{fig:is}.

Lemma \ref{lemma:conv:diff:V} in principle allows us to solve the \eqref{eq:optimal:IS:2} problem using any of the stochastic approximation methods discussed in Section \ref{sec:sa:with:is}.
In particular, an NDA sequence $\{\bar \mu_n\}_{n \in \mathbb N}$ based on the stochastic gradient $H(\theta^\star, \mu, X)$ defined in \eqref{eq:gradient:V} satisfies (under appropriate conditions) the CLT
\begin{equation*}
  \sqrt{n}\left( \bar{\mu}_n - \mu^\star \right) \overset{d}{\to} \mathcal N \left(0, \text{R}^{\dagger}\, \text{Var}_{X \sim \mathbb P}\left[H(\theta^\star, \mu^\star, X)\right] \text{R}^{\dagger} \right),
\end{equation*}
with $\text{R}:=\text{P}_{C^\star_{a}}\nabla^2  v(\theta^\star,\mu^\star) \text{P}_{C^\star_{a}}$.

\medskip
\noindent\textbf{Challenge II.} \emph{The second and perhaps more fundamental challenge is the fact that in order to find the minimizer $\mu^\star$ in problem \eqref{eq:optimal:IS:2}, we require knowledge of the minimizer $\theta^\star$ in problem (\ref{eq:so}).}

\medskip
To address this conundrum, we consider the following joint NDA iteration instead:
\begin{align}
\label{eq:NDA:SA+IS:iteration}
\begin{split}
    \begin{bmatrix}\theta_{n+1} \\  \mu_{n+1}\end{bmatrix} &= \argmin_{(\theta, \mu) \in \Theta\times \mathcal M} \left\{ \inner{\sum_{k=0}^n \alpha_{k+1} \begin{bmatrix} G_{\mu_k}(\theta_k,X_{k+1}^{(\mu_k)})
 \\ H(\theta_k,\mu_k,X_{k+1}) \end{bmatrix}}{\begin{bmatrix}\theta \\ \mu \end{bmatrix}} +\frac{1}{2} \left\| \begin{bmatrix}\theta - \theta_{0} \\ \mu - \mu_{0} \end{bmatrix} \right\|^2\right\}\\
    \bar{\theta}_n &= \frac 1n \sum_{i=0}^{n-1} \theta_{i}, \quad
    \bar{\mu}_n = \frac 1n \sum_{i=0}^{n-1} \mu_{i},
\end{split}
\end{align}
with step size $\alpha_n = \alpha/n^\gamma$, for $\gamma \in (1/2,1)$ and some constant $\alpha>0$. In Section~\ref{sec:conv:analysis} we will study the convergence properties of the suggested iteration procedure \eqref{eq:NDA:SA+IS:iteration} and show that such an adaptive IS scheme guarantees the optimal asymptotic variance.


\subsection{Secondary Importance Sampling}
\label{subsec:secondary:IS}

The reader may have noticed that the stochastic optimization problem~\eqref{eq:optimal:IS:2} is of a similar nature to problem~\eqref{eq:so}. In fact, Lemma \ref{lemma:MM:assump:IS} ensures that problem~(\ref{eq:optimal:IS:2}) satisfies Assumptions \ref{assump:SO}\ref{assump:SO:convex} and \ref{assump:SO}\ref{assump:SO:differentiable}, whereas \ref{assump:SO}\ref{assump:SO:bounded} follows from the assumed compactness of our IS parameter set $\mc M$.
Consequently, since we assume in Assumption \ref{assump:IS}\ref{ass:IS:unique} that the optimal IS parameter $\mu^\star$ is a unique minimizer in (\ref{eq:optimal:IS:2}) with $\nabla^2 v(\mu^\star)\in \mb S_{++}$, then problem (\ref{eq:optimal:IS:2}) is precisely of the same nature as problem (\ref{eq:so}).

Given this observation, it is natural to consider a secondary importance sampling procedure. Indeed, observe that the gradient of problem (\ref{eq:optimal:IS:2}) can be alternatively characterized as
\begin{align}
\label{eq:H_nu}
    \nabla_\mu \mathbb E_{X \sim \mathbb P} [V(\theta,\mu,X)] = \mathbb E_{X^{(\nu)} \sim \mathbb P_{\nu}}\left[H_{\nu}(\theta, \mu, X^{(\nu)})\defn\|\text{P}_{A_{a}^\theta} G(\theta,X^{(\nu)})\|^2\nabla_\mu \ell(X^{(\nu)},\mu) \kappa(X^{(\nu)},\nu)\right],
\end{align}
where the second expectation is with respect to a secondary IS distribution $\mathbb P_\nu$, with parameter $\nu \in \mathcal V$, and associated Radon-Nikodym derivative function $\kappa(x,\nu) = \tfrac{\d \mb P}{\d \mb P_\nu}(x)$.
This observation suggests that one can indeed use the following adaptive IS scheme
\begin{align}
\label{eq:NDA:SA+IS:iteration:extension}
\begin{split}
    \begin{bmatrix}\theta_{n+1} \\  \mu_{n+1}\end{bmatrix} &= \argmin_{(\theta, \mu) \in \Theta\times \mathcal M} \left\{ \inner{\sum_{k=0}^n \alpha_{k+1} \begin{bmatrix} G_{\mu_k}(\theta_k,X_{k+1}^{(\mu_k)})
 \\ H_{\nu_k}(\theta_k,\mu_k,X^{(\nu_k)}_{k+1}) \end{bmatrix}}{\begin{bmatrix}\theta \\ \mu \end{bmatrix}} +\frac{1}{2} \left\| \begin{bmatrix}\theta - \theta_{0} \\ \mu - \mu_{0} \end{bmatrix} \right\|^2\right\}\\
    \bar{\theta}_n &= \frac 1n \sum_{i=0}^{n-1} \theta_{i}, \quad
    \bar{\mu}_n = \frac 1n \sum_{i=0}^{n-1} \mu_{i},
\end{split}
\end{align}
where the superscript $(\nu_k)$ in $X_{k+1}^{(\nu_k)}$ denotes the fact that the sample is distributed according to $\mathbb P_{\nu_k}$.
In what follows we discuss three choices for the secondary importance sampling parameters $\nu_k$ sorted by increasing complexity.

\begin{enumerate}[label=(\roman*)]
    \item \label{point:lemaire} First, in particular settings, a natural choice of secondary IS parameters can be available. In the context of exponential tilting  discussed in Section \ref{sec:example:ECM}, with $\mc M=-\mc M$, \citet{lemaire2010unconstrained} suggest using the same exponential tilting class for the secondary IS class together with the simple choice $\nu_k=-\mu_k$, so that
    \begin{align*}
    \begin{split}
        H_{\nu_k}(\theta_k,\mu_k,X^{\nu_k}_{k+1}) = (\nabla \phi(\mu_k) - X^{(-\mu_k)}_{k+1})\|G(\theta_k, X^{(-\mu_k)}_{k+1})\|^2\, e^{\phi(\mu_k) + \phi(-\mu_k)}.
    \end{split}
    \end{align*}
    This choice has been exploited in \cite{lemaire2010unconstrained} to bound and control the growth of the stochastic gradient in order to ensure the almost sure convergence of the adaptive IS procedure even when the set $\mc M$ is not bounded.

    \item \label{point:he} Secondly, as suggested by \cite{he2023adaptive}, we may assume that there exists a readily available good importance sampler in the secondary importance sampling class $\set{\mb P_\nu}{\nu \in \mc V}$ for estimating the gradient $ \nabla_\mu \mathbb E_{X \sim \mathbb P} [V(\theta,\mu,X)]$, for a given $\theta$ and $\mu$. That is, we may assume to have access to a mapping $I$ so that $\nu = I(\theta, \mu)$ is associated with an asymptotic variance $\text{Var}_{X^{(\nu)} \sim \mathbb P_\nu}\left[H_{\nu}(\theta, \mu, X^{(\nu)})\right]$ that is sufficiently small. In certain cases, large deviations theory can be used to derive such a mapping.

    \item Clearly, we may apply the same reasoning developed thus far to advocate for choosing $\nu_k = \nu^\star$, where $\nu^\star$ minimizes the trace of the asymptotic variance of the secondary importance sampler, i.e.,
    \begin{align*}
        \nu^\star = \argmin_{\nu \in \mathcal V}\; \text{Tr}\left(\text{Var}_{X^{(\nu)} \sim \mathbb P_\nu}\left[\|G(\theta^\star,X)\|^2\nabla_\mu \ell(X^{(\nu)},\mu^\star) \kappa(X^{(\nu)},\nu)\right]\right).
    \end{align*}
    However, this again leads to the same conundrum previously encountered: identifying $\nu^\star$ requires knowledge of both $\theta^\star$ and $\mu^\star$, which are unavailable a priori. As such, one may recursively apply the approach developed in this paper as many times as necessary, ultimately terminating with one of the heuristic procedures described in point~\ref{point:lemaire} or~\ref{point:he}.
\end{enumerate}

\noindent Secondary importance sampling lies beyond the scope of this paper. While we do not pursue this direction further, the theoretical results developed in the next section extend to all three strategies discussed above.


\section{Convergence Analysis}
\label{sec:conv:analysis}

In what follows, we prove that the iterates $({\theta}_n, {\mu}_n)$ defined in~\eqref{eq:NDA:SA+IS:iteration} satisfy the almost sure convergence $({\theta}_n, {\mu}_n) \overset{\text{a.s.}}{\to} (\theta^\star,\mu^\star)$, where $\theta^\star$ is the solution to problem~\eqref{eq:so} and $\mu^\star$ is the solution to problem~\eqref{eq:optimal:IS:2}. Moreover, we establish that the averaged iterates $\bar{\theta}_n$ satisfy the central limit theorem (CLT)
\begin{align}
\label{eq:CLT1:conv}
    \sqrt{n}\left( \bar{\theta}_n - \theta^\star \right) \overset{d}{\to} \mathcal N \left(0, \text{Q}^{\dagger}\, \text{Var}_{X^{(\mu^\star)} \sim \mathbb P_{\mu^\star}}\left[G_{\mu^\star}(\theta^\star, X^{(\mu^\star)})\right] \text{Q}^{\dagger} \right),
\end{align}
with $\text{Q}:=\text{P}_{A^\star_{a}}\nabla^2  f(\theta^\star) \text{P}_{A^\star_{a}}$. By Lemma~\ref{lemma:kkt-residual}, this further implies the CLT
\begin{align}
\label{eq:CLT2:conv}
    \sqrt{n}\, \text{P}_{A^\star_{a}} \nabla f(\bar{\theta}_n)  \overset{d}{\to} \mathcal N \left(0, \text{Var}_{X^{(\mu^\star)} \sim \mathbb P_{\mu^\star}}\left[\text{P}_{A^\star_{a}} G_{\mu^\star}(\theta^\star, X^{(\mu^\star)})\right] \right),
\end{align}
with $\mu^\star = \argmin_{\mu \in \mathcal M} \trace\left(\text{Var}_{X^{(\mu)} \sim \mathbb P_\mu}\left[\text{P}_{A^\star_{a}} G_\mu(\theta^\star, X^{(\mu)})\right]\right)$. For ease of notation, throughout this section we further simplify the notation in \eqref{eq:NDA:SA+IS:iteration} to
\begin{align*}
    \begin{bmatrix}\theta_{n+1} \\ \mu_{n+1} \end{bmatrix} = \argmin_{(\theta, \mu) \in \Theta\times \mathcal M} \left\{ \inner{\sum_{k=0}^n \alpha_{k+1} \begin{bmatrix}G_k \\ H_k \end{bmatrix}}{\begin{bmatrix}\theta \\ \mu \end{bmatrix}} +\frac{1}{2} \left\| \begin{bmatrix}\theta \\ \mu \end{bmatrix} \right\|^2\right\},
\end{align*}
where, without loss of generality, we consider $\theta_0 = 0$ and $\mu_0 = 0$. Moreover, whenever clear from the context, we will drop the subscripts when defining expectations and variances (i.e., we write $\mathbb E[\cdot]$ instead of $\mathbb E_{X \sim \mathbb P}[\cdot]$). Finally, in this section we let $p_1$ and $p_2$ denote the dimensions of the vectors $b_a^\star$ and $d_a^\star$, respectively.


\subsection{Almost Sure Convergence}

We start by studying the almost sure convergence of the sequence $\{(\theta_n,\mu_n)\}_{n \in \mathbb N}$. For this, we require the following regularity assumptions.

\begin{assumption}[Regularity assumptions~I]~
\label{assump:as:conv}
\begin{enumerate}[label=(\roman*)]
    \item \label{assump:as:conv:1} There exists $c_1>0$ such that for all $\theta\in \Theta$ and $\mu \in \mathcal M$:
    \begin{itemize}
        \item[(i.1)] \label{assump:as:conv:1.1} ${f}(\theta) - {f}(\theta^\star) \geq c_1 \|\theta - \theta^\star\|^2$;

        \item[(i.2)] \label{assump:as:conv:1.2} $v(\theta^\star,\mu) - v(\theta^\star,\mu^\star) \geq c_1 \|\mu - \mu^\star\|^2$.
    \end{itemize}

    \item \label{assump:as:conv:2} There exists $c_2, c_3 < \infty$ such that for all $\theta\in \Theta$ and $\mu \in \mathcal M$:
    \begin{itemize}
        \item[(ii.1)] \label{assump:as:conv:2.1} $\|\nabla {f}(\theta) - \nabla {f}(\theta^\star)\| \leq c_2 \|\theta-\theta^\star\|$;

        \item[(ii.2)] \label{assump:as:conv:2.2} $\|\nabla_\mu v(\theta^\star,\mu) - \nabla_\mu v(\theta^\star,\mu^\star)\| \leq c_2 \|\mu-\mu^\star\|$;
    \end{itemize}
  \item \label{assump:as:conv:3} $\|\nabla_\mu v(\theta,\mu) - \nabla_\mu v(\theta^\star,\mu)\| \leq c_3 \|\theta-\theta^\star\|^2$ for all $\mu \in \mathcal M$ and all $\theta \in \Theta$ satisfying $A_a^\star \theta = b_a^\star$,  $A_i^\star \theta < b_i^\star$.
    \item \label{assump:as:conv:4} $-\nabla {f}(\theta^\star) = A_a^{\star \top} \lambda$, for some $\lambda \in \mathbb R_{++}^{p_1}$.
\end{enumerate}
\end{assumption}

Assumptions~\ref{assump:as:conv}\ref{assump:as:conv:1}-\ref{assump:as:conv:3} are natural extensions of the standard gradient regularity conditions which allow us to prove the convergence of the PR-SA and NDA methods. 
Moreover, Assumption~\ref{assump:as:conv}\ref{assump:as:conv:4} will allow us to prove that the sequence of iterates $\{\theta_n\}_{n \in \mathbb N}$ identifies the active constraints in \eqref{eq:so} in finite time, i.e., there exists some (random) finite $N$ such that $A_{a}^\star \theta_n = b_a^\star$ and $A_{i}^\star \theta_n < b_i^\star$ for all $n \geq N$ (see Lemma~\ref{lemma:as:conv:theta}). Although this does not have a direct impact on the proof of Theorem~\ref{thm:as:conv}, where we will explicitly use only Assumptions~\ref{assump:as:conv}\ref{assump:as:conv:1}-\ref{assump:as:conv:3}, the active constraints identification ensured by Assumption~\ref{assump:as:conv}\ref{assump:as:conv:4} is fundamental in practice to prove that Assumption~\ref{assump:as:conv}\ref{assump:as:conv:3} is satisfied. For more detail on this, see Remark~\ref{remark:as:conv:identification:active} below.
We are now ready to state the first main result of this paper.

\begin{theorem}[Almost sure convergence]
\label{thm:as:conv}
Let Assumptions~\ref{assump:SO}, \ref{assump:IS} and \ref{assump:as:conv} be satisfied. Then,
\begin{align}
\label{eq:as:conv:theta:mu}
    \begin{bmatrix}\theta_n \\ \mu_n \end{bmatrix} \overset{\text{a.s.}}{\to} \begin{bmatrix}\theta^\star \\ \mu^\star \end{bmatrix},
\end{align}
where $\theta^\star$ is the optimal solution in \eqref{eq:so}, and $\mu^\star$ is the optimal IS parameter in \eqref{eq:optimal:IS:2}.
\end{theorem}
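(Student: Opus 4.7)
The plan is to exploit a crucial structural feature of iteration \eqref{eq:NDA:SA+IS:iteration}: because the quadratic proximal term and the linear term both decouple across $\theta$ and $\mu$, the argmin factors into two independent NDA updates driven by $\{G_k\}$ and $\{H_k\}$. The only coupling is that the laws of $G_k$ and $H_k$ depend on the current iterate $(\theta_k,\mu_k)$. I would therefore proceed sequentially: first establish $\theta_n \to \theta^\star$ almost surely, then use this to control the bias in the $\mu$-update and conclude $\mu_n \to \mu^\star$ almost surely.

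For the $\theta$-iterate, the decisive observation is that importance sampling preserves unbiasedness: regardless of the choice of $\mu_k$, we have $\mathbb{E}[G_k\mid \mathcal F_k] = \nabla f(\theta_k)$ by \eqref{eq:IS}. Combined with the uniform second-moment bound $\mathbb{E}[\|G_k\|^2 \mid \mathcal F_k] \leq G_M^2$ from Assumption~\ref{assump:IS}\ref{ass:IS:dominated}, the NDA convergence analysis of \citet{duchi2021asymptotic} applies essentially verbatim under Assumptions~\ref{assump:as:conv}\ref{assump:as:conv:1}(i.1) and \ref{assump:as:conv}\ref{assump:as:conv:2}(ii.1): a standard Lyapunov argument on $\|\theta_n - \theta^\star\|^2$, closed out by the Robbins-Siegmund lemma, yields $\theta_n \to \theta^\star$ almost surely and, as a byproduct of the quadratic-growth inequality, the a.s.\ summability $\sum_n \alpha_n \|\theta_n - \theta^\star\|^2 < \infty$.

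For the $\mu$-iterate, the difficulty is that $\mathbb{E}[H_k \mid \mathcal F_k] = \nabla_\mu v(\theta_k,\mu_k)$, whereas the target is $\nabla_\mu v(\theta^\star,\mu_k)$. The bridge is the finite-time active-set identification property of NDA: from Assumption~\ref{assump:as:conv}\ref{assump:as:conv:4} (strict complementarity at $\theta^\star$) together with $\theta_n \to \theta^\star$, there exists a (random) $N<\infty$ with $A_a^\star\theta_n=b_a^\star$ and $A_i^\star\theta_n<b_i^\star$ for all $n\geq N$. Once the active set is identified, Assumption~\ref{assump:as:conv}\ref{assump:as:conv:3} yields the bias bound $\|\nabla_\mu v(\theta_n,\mu_n)-\nabla_\mu v(\theta^\star,\mu_n)\|\leq c_3\|\theta_n-\theta^\star\|^2$. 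The $\mu$-iteration then reads as a perturbed NDA minimizing the convex function $v(\theta^\star,\cdot)$, with a pathwise bias that is a.s.\ summable against the step sizes $\alpha_n$, together with bounded second moment $\mathbb{E}[\|H_k\|^2\mid \mathcal F_k]\leq H_M^2$ from Assumption~\ref{assump:IS}\ref{ass:IS:dominated}. Running the Lyapunov-Robbins-Siegmund recursion on $\|\mu_n-\mu^\star\|^2$, the contraction furnished by the quadratic growth in Assumption~\ref{assump:as:conv}\ref{assump:as:conv:1}(i.2) dominates both the noise term and the summable bias, giving $\mu_n \to \mu^\star$ almost surely.

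The main obstacle I anticipate is the rigorous handling of the bias-summability step. Intuitively, NDA iterates live on an $\mathcal O(n^{-\gamma/2})$ scale, so $\alpha_n\|\theta_n-\theta^\star\|^2\sim n^{-2\gamma}$ is summable for $\gamma>1/2$; but turning this heuristic into a pathwise statement requires extracting $\sum_n \alpha_n\|\theta_n-\theta^\star\|^2<\infty$ as a direct output of the Lyapunov analysis of the $\theta$-iterate, rather than as a consequence of a separate rate bound. This is delicate because the stochastic gradient $G_k$ depends on $\mu_k$, and one must verify that the active-set identification argument of \citet{duchi2021asymptotic} is not disturbed by this dependence---which it is not, since identification depends only on convergence of $\theta_n$ and strict complementarity. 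A secondary, but essentially bookkeeping, difficulty is verifying that the perturbed Robbins-Siegmund step is not affected by the fact that the bias bound only becomes valid after the (random) identification time $N$; this is handled by splitting the recursion at $N$ and absorbing the pre-$N$ contribution into a finite (random) constant.
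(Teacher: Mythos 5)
Your proposal is correct in substance, and its key ingredients are exactly those the paper uses: unbiasedness of $G_k$ for $\nabla f(\theta_k)$ regardless of $\mu_k$, the uniform second-moment bounds of Assumption~\ref{assump:IS}\ref{ass:IS:dominated}, a standalone analysis of the $\theta$-block that delivers both $\sum_n \alpha_{n+1}\|\theta_n-\theta^\star\|^2<\infty$ and finite-time identification of the active constraints under strict complementarity (this is precisely Lemma~\ref{lemma:as:conv:theta}), and the use of Assumption~\ref{assump:as:conv}\ref{assump:as:conv:3} only after the a.s.\ finite identification time $N$, with the pre-$N$ bias absorbed into a finite random constant via the uniform bound implied by Assumption~\ref{assump:IS}\ref{ass:IS:dominated}. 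Where you differ is organizational: you run the argument sequentially, recasting the $\mu$-update as a perturbed NDA for the \emph{fixed} convex objective $v(\theta^\star,\cdot)$ with a pathwise-summable bias, whereas the paper runs a single Robbins--Siegmund recursion on a joint gap quantity $R_n$ built from both blocks and feeds in the standalone $\theta$-results as an input, just as you do. The sequential route is arguably more transparent about which assumption drives which block; the joint route avoids rerunning the perturbed-NDA convergence argument a second time. One caution: your phrase ``Lyapunov argument on $\|\theta_n-\theta^\star\|^2$'' (and likewise on $\|\mu_n-\mu^\star\|^2$) is not literally available for dual averaging, since $\theta_{n+1}$ is computed from the accumulated dual sum rather than from $\theta_n$; the correct supermartingale is the conjugate-based gap $R_n$, and Robbins--Siegmund only yields $R_n\to R_\infty<\infty$ together with summability of the inner-product terms. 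Concluding $R_\infty=0$, hence convergence of the iterates, still requires the subsequence argument weighted by $b_{n+1}=\sum_{k\le n}\alpha_{k+1}$ combined with the martingale bound and the gradient-increment bound (the single-block analogues of Lemmas~\ref{lemma:noise:convergence} and \ref{lemma:gradients:error:bound}). Since you defer to the Duchi--Ruan NDA analysis, this machinery does go through for both blocks, including the perturbed $\mu$-block, but it is this gap-function argument, not a distance recursion, that must be rerun.
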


\begin{proof}
The proof builds upon Lemma~\ref{lemma:robbins:siegmund} with $R_{n+1}$ defined as
\begin{align}
\label{eq:R_{n+1}:expression}
    R_{n+1} := \inner{\sum_{k=0}^n \alpha_{k+1} \begin{bmatrix}G_k \\ H_k \end{bmatrix} + \begin{bmatrix}\theta_{n+1} \\ \mu_{n+1}\end{bmatrix}}{\begin{bmatrix}\theta^\star-\theta_{n+1} \\ \mu^\star-\mu_{n+1}\end{bmatrix}} + \frac{1}{2}\left\| \begin{bmatrix}\theta_{n+1}-\theta^\star \\ \mu_{n+1}-\mu^\star \end{bmatrix} \right\|^2.
\end{align}
Since the iterate $(\theta_{n+1},\mu_{n+1})$ is the optimal solution in the optimization problem \eqref{eq:NDA:SA+IS:iteration}, its first-order optimality condition guarantees that
\begin{align*}
    \inner{\sum_{k=0}^n \alpha_{k+1} \begin{bmatrix}G_k \\ H_k \end{bmatrix} + \begin{bmatrix}\theta_{n+1} \\ \mu_{n+1}\end{bmatrix}}{\begin{bmatrix}\theta-\theta_{n+1} \\ \mu-\mu_{n+1}\end{bmatrix}} \geq 0,
\end{align*}
for all $\theta\in \Theta$ and $\mu \in \mathcal M$.
In particular, this holds true for $\theta^\star \in \Theta$ and $\mu^\star \in \mathcal M$, showing that $R_{n+1} \geq 0$.
Standard algebraic manipulations show that $R_{n+1}$ can be rewritten as
\begin{align*}
    R_{n+1} &= \inner{\sum_{k=0}^n \alpha_{k+1} \begin{bmatrix}G_k \\ H_k \end{bmatrix}}{\begin{bmatrix}\theta^\star \\ \mu^\star \end{bmatrix}} + \frac{1}{2}\left\|\begin{bmatrix}\theta^\star \\ \mu^\star \end{bmatrix}\right\|^2 + \inner{-\sum_{k=0}^n \alpha_{k+1} \begin{bmatrix}G_k \\ H_k \end{bmatrix}}{\begin{bmatrix}\theta_{n+1} \\ \mu_{n+1}\end{bmatrix}} - \frac{1}{2}\left\|\begin{bmatrix}\theta_{n+1} \\ \mu_{n+1}\end{bmatrix}\right\|^2\\
    &= \inner{\sum_{k=0}^n \alpha_{k+1} \begin{bmatrix}G_k \\ H_k \end{bmatrix}}{\begin{bmatrix}\theta^\star \\ \mu^\star \end{bmatrix}} + \frac{1}{2}\left\|\begin{bmatrix}\theta^\star \\ \mu^\star \end{bmatrix}\right\|^2 + \max_{\substack{\theta \in \Theta \\ \mu \in \mathcal M}} \left\{ \inner{-\sum_{k=0}^n \alpha_{k+1} \begin{bmatrix}G_k \\ H_k \end{bmatrix}}{\begin{bmatrix}\theta \\ \mu\end{bmatrix}} - \frac{1}{2}\left\|\begin{bmatrix}\theta \\ \mu\end{bmatrix}\right\|^2 \right\}.
\end{align*}
Since the objective function in the above maximization is separable in $\theta$ and $\mu$, we have that the joint maximization over $\theta\in \Theta$ and $\mu \in \mathcal M$ in the previous equation  is equivalent to the sum
\begin{align*}
    \max_{\theta \in \mathbb R^s} \left\{ \inner{-\sum_{k=0}^n \alpha_{k+1} G_k}{\theta} - \frac{1}{2}\|\theta\|^2 - \delta_{\Theta}(\theta)\right\} + \max_{\mu \in \mathbb R^m} \left\{ \inner{-\sum_{k=0}^n \alpha_{k+1} H_k}{\mu} - \frac{1}{2}\|\mu\|^2 - \delta_{\mathcal M}(\mu)\right\},
\end{align*}
with $\delta_\Theta, \delta_{\mathcal M}$ being the indicator functions of the sets $\Theta$ and $\mathcal M$, respectively. With the aim of upper-bounding $R_{n+1}$, we proceed by finding upper bounds on these two maximization problems.

\medskip

\emph{Term 1:} We start with the maximization over $\theta$. Defining $\ell_1(\theta):= \frac{1}{2}\|\theta\|^2 + \delta_{\Theta}(\theta)$, we have that the maximum over $\theta$ is precisely the convex conjugate $\ell_1^*(-\sum_{k=0}^n \alpha_{k+1} G_k)$. As $\min_{\theta \in \mathbb R^s} \{ \inner{\sum_{k=0}^n \alpha_{k+1} G_k}{\theta} + \frac{1}{2}\|\theta\|^2 + \delta_{\Theta}(\theta)\}$ is the Moreau envelope of $\sum_{k=0}^n \alpha_{k+1} G_k + \delta_{\Theta}(\theta)$ evaluated at zero, we have that the gradient $\nabla \ell_1^*(-\sum_{k=0}^n \alpha_{k+1} G_k)$ is equal to proximal map evaluated at zero \cite[Theorem~2.26]{rockafellar2009variational}, i.e.,
\begin{align*}
    \nabla \ell_1^*\left(-\sum_{k=0}^n \alpha_{k+1} G_k\right) = \argmax_{\theta \in \mathbb R^s} \left\{ \inner{\sum_{k=0}^n \alpha_{k+1} G_k}{\theta} + \frac{1}{2}\|\theta\|^2 + \delta_{\Theta}(\theta) \right\}=\theta_{n+1}.
\end{align*}
Moreover, since $\ell_1$ is $1$-strongly convex, we have that $\ell_1^*$ is $1$-smooth. Therefore, we can upper-bound $\ell_1^*(-\sum_{k=0}^n \alpha_{k+1} G_k)$ as
\begin{align*}
    \ell_1^*\left( -\sum_{k=0}^n \alpha_{k+1} G_k \right) \leq \ell_1^*\left( -\sum_{k=0}^{n-1} \alpha_{k+1} G_k \right) - \alpha_{n+1} \inner{G_n}{\theta_n} + \frac{1}{2} \|\alpha_{n+1} G_n\|^2.
\end{align*}

\medskip

\emph{Term 2:} The maximization over $\mu$ can be dealt with similarly. Defining $\ell_2(\mu):= \frac{1}{2}\|\mu\|^2 + \delta_{\mathcal M}(\mu)$, we can upper-bound $\ell_2^*(-\sum_{k=0}^n \alpha_{k+1} H_k)$ as
\begin{align*}
    \ell_2^*\left( -\sum_{k=0}^n \alpha_{k+1} H_k \right) \leq \ell_2^*\left( -\sum_{k=0}^{n-1} \alpha_{k+1} H_k \right) - \alpha_{n+1} \inner{H_n}{\mu_n} + \frac{1}{2} \|\alpha_{n+1} H_n\|^2.
\end{align*}
Introducing these two upper bounds into the expression of $R_{n+1}$, after few algebraic manipulations we obtain
\begin{align*}
    R_{n+1} \leq R_n - \alpha_{n+1}\inner{\begin{bmatrix}G_n \\ H_n \end{bmatrix}}{\begin{bmatrix}\theta_{n}-\theta^\star \\ \mu_{n}-\mu^\star \end{bmatrix}} + \frac{\alpha_{n+1}^2}{2}\left\|\begin{bmatrix}G_n \\ H_n \end{bmatrix}\right\|^2.
\end{align*}
Since $R_n$ is adapted to the filtration $\mathcal F_n = \sigma(X_k^{(\mu_{k-1})}, X_k |\, k \leq n)$, we can take the conditional expectation $\mathbb E[\cdot | \mathcal F_n]$ on both sides and obtain
\begin{align}
\label{eq:R_n:RobbinsSiegmund:theta:mu}
    \mathbb E[R_{n+1} | \mathcal F_n] \leq R_n - \alpha_{n+1}\inner{\begin{bmatrix}\nabla {f}(\theta_n) \\ \nabla_\mu v(\theta_n,\mu_n)\end{bmatrix}}{\begin{bmatrix}\theta_{n}-\theta^\star \\ \mu_{n}-\mu^\star \end{bmatrix}} + \frac{\alpha_{n+1}^2}{2} \begin{bmatrix} \mathbb E [\|G_n\|^2| \mathcal F_n ] \\ \mathbb E[\|H_n\|^2| \mathcal F_n] \end{bmatrix},
\end{align}
where we have used the facts
\begin{align*}
    \mathbb E[G_n | \mathcal F_n] &= \mathbb E_{X^{(\mu_n)} \sim \mathbb P_{\mu_n}}\left[ G(\theta_n, X^{(\mu_n)})  \ell(X^{(\mu_n)},\mu_n) \right] = \nabla {f}(\theta_n),\\
    \mathbb E[H_n | \mathcal F_n] &= \mathbb E_{X \sim \mathbb P}\left[\,\|\text{P}_{A_{a}^{\theta_n}} G(\theta_n,X)\|^2\nabla_\mu \ell(X,\mu_n)\right]=\nabla_\mu v(\theta_n,\mu_n).
\end{align*}
We will now show that inequality \eqref{eq:R_n:RobbinsSiegmund:theta:mu} can be brought into the form required by Lemma~\ref{lemma:robbins:siegmund}. For this, we start by rewriting the second term on the right-hand side in \eqref{eq:R_n:RobbinsSiegmund:theta:mu} as
\begin{align*}
    -\alpha_{n+1}\inner{\begin{bmatrix}\nabla {f}(\theta_n) \\ \nabla_\mu v(\theta^\star,\mu_n)\end{bmatrix}}{\begin{bmatrix}\theta_{n}-\theta^\star \\ \mu_{n}-\mu^\star \end{bmatrix}} - \alpha_{n+1}\inner{ \nabla_\mu v(\theta_n,\mu_n)-\nabla_\mu v(\theta^\star,\mu_n)}{ \mu_{n}-\mu^\star},
\end{align*}
which we then upper-bound using the Cauchy-Schwarz inequality and the boundedness of $\mathcal M$ by
\begin{align*}
    -\alpha_{n+1}\inner{\begin{bmatrix}\nabla {f}(\theta_n) \\ \nabla_\mu v(\theta^\star,\mu_n)\end{bmatrix}}{\begin{bmatrix}\theta_{n}-\theta^\star \\ \mu_{n}-\mu^\star \end{bmatrix}} + \diam(\mc M) \alpha_{n+1}\left\|{\nabla_\mu v(\theta_n,\mu_n)-\nabla_\mu v(\theta^\star,\mu_n)}\right\|,
\end{align*}
with $\diam(\mc M) \defn \max_{\mu, \mu'\in \mc M}\norm{\mu-\mu'}<\infty$.
Observe now that the term $\left\|{\nabla_\mu v(\theta_n,\mu_n)-\nabla_\mu v(\theta^\star,\mu_n)}\right\|$ can be in general upper bounded as
\begin{align*}
    \left\|{\nabla_\mu v(\theta_n,\mu_n)-\nabla_\mu v(\theta^\star,\mu_n)}\right\|
    &= \left\|\mathbb E_{X \sim \mathbb P} \left[ | \|\text{P}_{A_{a}^{\theta_n}}G(\theta_n,X)\|^2 - \|\text{P}_{A^\star_{a}}G(\theta^\star,X)\|^2 \nabla_\mu \ell(X,\mu_n) \right] \right\| \\
    &\leq \mathbb E_{X \sim \mathbb P} \left[ \left( \|\text{P}_{A_{a}^{\theta_n}}G(\theta_n,X)\|^2 + \|\text{P}_{A^\star_{a}}G(\theta^\star,X)\|^2 \right) \|\nabla_\mu \ell(X,\mu_n)\| \right] \\
    &\leq \mathbb E_{X \sim \mathbb P} \left[ \left( \|\text{P}_{A_{a}^{\theta_n}}G(\theta_n,X)\|^4 + \|\text{P}_{A^\star_{a}}G(\theta^\star,X)\|^4 \right) \|\nabla_\mu \ell(X,\mu_n)\|^2 \right]\\
    &\leq 2H_M^2.
\end{align*}
The first equality is a consequence of the expressions of $\nabla_\mu v(\theta_n,\mu_n)$ and $\nabla_\mu v(\theta^\star,\mu_n)$ (recall equation (\ref{eq:gradient:V})), whereas the first inequality is trivial. The second inequality follows from H\"older's inequality, which allows us to use Assumption \ref{assump:IS}\ref{ass:IS:dominated} to recover the final bound. 

Moreover, on the event $\mc A := \{A^\star_a\theta_n = b^\star_a, \;A_i^\star\theta_n<b_i\}$, Assumption \ref{assump:as:conv}\ref{assump:as:conv:3} guarantees that the upper bound $\|\nabla_\mu v(\theta_n,\mu_n) - \nabla_\mu v(\theta^\star,\mu_n)\| \leq c_3 \|\theta_n-\theta^\star\|^2$ holds. Putting everything together, the second term in \eqref{eq:R_n:RobbinsSiegmund:theta:mu} is upper bounded by
\begin{align*}
    -\alpha_{n+1}\inner{\begin{bmatrix}\nabla {f}(\theta_n) \\ \nabla_\mu v(\theta^\star,\mu_n)\end{bmatrix}}{\begin{bmatrix}\theta_{n}-\theta^\star \\ \mu_{n}-\mu^\star \end{bmatrix}} + \diam(\mc M) \alpha_{n+1}\left( 2H_M^2 \one{\mc A^c}  + c_3 \|\theta_n-\theta^\star\|^2\one{\mc A}  \right),
\end{align*}
where $\mc A^c$ denotes the complement of $\mc A$. 
Introducing this into \eqref{eq:R_n:RobbinsSiegmund:theta:mu}, and using the bounds in Assumption~\ref{assump:IS}\ref{ass:IS:dominated} for the last term in \eqref{eq:R_n:RobbinsSiegmund:theta:mu}, we obtain
\begin{align}
\label{eq:R_n:RobbinsSiegmund:theta:mu:1}
\begin{split}
  \mathbb E[R_{n+1} | \mathcal F_n] \leq & R_n -\alpha_{n+1}\inner{\begin{bmatrix}\nabla {f}(\theta_n) \\ \nabla_\mu v(\theta^\star,\mu_n)\end{bmatrix}}{\begin{bmatrix}\theta_{n}-\theta^\star \\ \mu_{n}-\mu^\star \end{bmatrix}} + \frac{\sqrt{G_M^4+H_M^4}}{2} \;\alpha_{n+1}^2 \\[0.5em]
  & \hspace{8em}+ \diam(\mc M) \alpha_{n+1}\left( 2H_M^2 \one{\mc A^c}  + c_3 \|\theta_n-\theta^\star\|^2 \right).
\end{split}
\end{align}
Now, notice that
\begin{align}
\label{eq:R_n:C_n}
    \alpha_{n+1}\inner{\begin{bmatrix}\nabla {f}(\theta_n) \\ \nabla_\mu v(\theta^\star,\mu_n)\end{bmatrix}}{\begin{bmatrix}\theta_{n}-\theta^\star \\ \mu_{n}-\mu^\star \end{bmatrix}} \geq 0.
\end{align}
This follows automatically from the first-order optimality conditions in \eqref{eq:so} and \eqref{eq:optimal:IS:2}, which guarantee that $\inner{\nabla {f}(\theta_n)}{\theta_{n}-\theta^\star} \geq 0$ and $\inner{\nabla_\mu v(\theta^\star,\mu_n)}{\mu_{n}-\mu^\star} \geq 0$. Moreover, letting $c_4:={\sqrt{(G_M^4+H_m^4)}}/2$ for ease of notation, we have that 
\begin{align}
\label{eq:R_n:B_n}
\begin{split}
   \sum_{n=0}^\infty & c_4 \alpha_{n+1}^2 + \diam(\mc M) \alpha_{n+1}\left( 2H_M^2 \one{\mc A^c}  + c_3 \|\theta_n-\theta^\star\|^2  \right) \\
  &= \sum_{n=0}^\infty c_4 \alpha_{n+1}^2  + \diam(\mc M) 2H_M^2\sum_{n=0}^N  \alpha_{n+1}  + \diam(\mc M) c_3 \sum_{n=0}^\infty \alpha_{n+1} \|\theta_n-\theta^\star\|^2  \\
  &\leq \sum_{n=0}^\infty c_4 \alpha_{n+1}^2  + \diam(\mc M) 2H_M^2(N+\sum_{n=0}^\infty  \alpha^2_{n+1})  + \diam(\mc M) c_3 \sum_{n=0}^\infty \alpha_{n+1} \|\theta_n-\theta^\star\|^2 < \infty
\end{split}
\end{align}
almost surely. The first equality follows from assertion \ref{lemma:as:conv:theta:5} in Lemma~\ref{lemma:as:conv:theta} (with $N$ defined there). The first inequality follows from the two bounds $\alpha_{n+1} \leq 1+\alpha_{n+1}^2$ and $\sum_{n=1}^N  \alpha^2_{n+1} \leq \sum_{n=1}^\infty  \alpha^2_{n+1})$ (as the step-sizes $\alpha_n$ are positive). The final inequality follows from the step-size condition $\sum_{n=1}^\infty\alpha_{n}^2 < \infty$, from assertion \ref{lemma:as:conv:theta:1} in Lemma~\ref{lemma:as:conv:theta}, and from the fact that $N$ is almost surely finite by assertion \ref{lemma:as:conv:theta:5} in Lemma~\ref{lemma:as:conv:theta}.
Now notice that \eqref{eq:R_n:RobbinsSiegmund:theta:mu:1} is exactly as in Lemma~\ref{lemma:robbins:siegmund}, with $A_n = 0$, $B_n$ as in \eqref{eq:R_n:B_n}, and $C_n$ as in \eqref{eq:R_n:C_n}. Therefore, there is a random variable $R_\infty < \infty$ such that $R_n \overset{\text{a.s.}}{\to} R_\infty$, and with probability one we have that
\begin{align}
\label{eq:sum:C_n}
    \sum_{n=0}^\infty \alpha_{n+1}\inner{\begin{bmatrix}\nabla {f}(\theta_n) \\ \nabla_\mu v(\theta^\star,\mu_n)\end{bmatrix}}{\begin{bmatrix}\theta_{n}-\theta^\star \\ \mu_{n}-\mu^\star \end{bmatrix}} < \infty.
\end{align}
In what follows, we will prove that $R_\infty = 0$, which will guarantee the desired convergence \eqref{eq:as:conv:theta:mu}. From \eqref{eq:sum:C_n} and Assumption~\ref{assump:as:conv}\ref{assump:as:conv:1}, we have that

\begin{align}
\label{eq:sum:alpha:theta^2:mu^2}
\begin{split}
    \sum_{n=0}^\infty \alpha_{n+1} \left\|\begin{bmatrix}\theta_{n}-\theta^\star \\ \mu_{n}-\mu^\star \end{bmatrix} \right\|^2 &\leq 
    \frac{1}{c_1}\sum_{n=0}^\infty \alpha_{n+1} \left({f}(\theta_n) - {f}(\theta^\star)\right) + \frac{1}{c_1}\sum_{n=0}^\infty \alpha_{n+1} \left(v(\theta^\star,\mu_n) - v(\theta^\star,\mu^\star)\right)
    \\ &\leq \frac{1}{c_1} \sum_{n=0}^\infty \alpha_{n+1}\inner{\nabla {f}(\theta_n)}{\theta_{n}-\theta^\star} + \frac{1}{c_1} \sum_{n=0}^\infty \alpha_{n+1}\inner{\nabla_\mu v(\theta^\star,\mu_n)}{\mu_{n}-\mu^\star} \\ &< \infty,
    \end{split}
\end{align}
where the second inequality follows from the convexity of $f(\cdot)$ and $v(\theta^\star,\cdot)$. 

We now define $b_{n+1} := \sum_{k=0}^n \alpha_{k+1}$. Since $\alpha_n = \alpha/n^\gamma$, for $\gamma \in (1/2,1)$, it can be shown that $\sum_{k=0}^\infty \left(\alpha_{k+1}/b_{k+1}\right) = \infty$. Moreover, from \eqref{eq:sum:alpha:theta^2:mu^2}, we know that 
\begin{align*}
    \sum_{n=0}^\infty \frac{\alpha_{n+1}}{b_{n+1}} \left(b_{n+1}\left\|\begin{bmatrix}\theta_{n}-\theta^\star \\ \mu_{n}-\mu^\star \end{bmatrix} \right\|^2\right) < \infty.
\end{align*}
Therefore, there exists a subsequence $\{(\theta_{n_i}, \mu_{n_i})\}_{i \in \mathbb N}$ for which, with probability one,
\begin{align}
\label{eq:conv:on:subsequence}
    \lim_{i \to \infty} b_{n_i+1}\left\|\begin{bmatrix}\theta_{n_i}-\theta^\star \\ \mu_{n_i}-\mu^\star \end{bmatrix} \right\|^2 = 0.
\end{align}
We are now ready to prove that $R_\infty = 0$. We start by bounding $R_{n+1}$ (defined in \eqref{eq:R_{n+1}:expression}) as
\begin{align}
\label{eq:bounds:R_n+1}
    R_{n+1} \leq \inner{\sum_{k=0}^n \alpha_{k+1} \begin{bmatrix}G_k \\ H_k \end{bmatrix}}{\begin{bmatrix}\theta^\star-\theta_{n+1} \\ \mu^\star-\mu_{n+1}\end{bmatrix}}
    + \left\|{\begin{bmatrix}\theta_{n+1} \\ \mu_{n+1}\end{bmatrix}}\right\|\left\|{\begin{bmatrix}\theta^\star-\theta_{n+1} \\ \mu^\star-\mu_{n+1}\end{bmatrix}}\right\|
    + \frac{1}{2}\left\| \begin{bmatrix}\theta_{n+1}-\theta^\star \\ \mu_{n+1}-\mu^\star \end{bmatrix} \right\|^2.
\end{align}
By restricting our attention to the subsequence $\{(\theta_{n_i}, \mu_{n_i})\}_{i \in \mathbb N}$, and by using \eqref{eq:conv:on:subsequence} and the compactness of $\Theta$ and $\mathcal M$, we have that
\begin{align}
\label{eq:bounds:R_n+1:last:two:terms}
    0\leq \left\|{\begin{bmatrix}\theta_{n_i+1} \\ \mu_{n_i+1}\end{bmatrix}}\right\|\left\|{\begin{bmatrix}\theta^\star-\theta_{n_i+1} \\ \mu^\star-\mu_{n_i+1}\end{bmatrix}}\right\|
    + \frac{1}{2}\left\| \begin{bmatrix}\theta_{n_i+1}-\theta^\star \\ \mu_{n_i+1}-\mu^\star \end{bmatrix} \right\|^2 \overset{\text{a.s.}}{\to} 0.
\end{align}
We now focus on the first term on the right-hand side in \eqref{eq:bounds:R_n+1}, which we rewrite as
\begin{align*}
    \inner{\sum_{k=0}^n \alpha_{k+1} \left(\begin{bmatrix}G_k - \nabla {f}(\theta_k) \\ H_k - \nabla_\mu v(\theta_k,\mu_k)\end{bmatrix} + \begin{bmatrix}\nabla {f}(\theta_k) - \nabla {f}(\theta^\star) \\ \nabla_\mu v(\theta_k,\mu_k) - \nabla_\mu v(\theta^\star,\mu^\star)\end{bmatrix} + \begin{bmatrix}\nabla{f}(\theta^\star) \\ \nabla_\mu v(\theta^\star,\mu^\star)\end{bmatrix}\right)}{\begin{bmatrix}\theta^\star-\theta_{n+1} \\ \mu^\star-\mu_{n+1}\end{bmatrix}}.
\end{align*}
By restricting our attention to the subsequence $\{(\theta_{n_i}, \mu_{n_i})\}_{i \in \mathbb N}$, and by using \eqref{eq:conv:on:subsequence}, we have that
\begin{align*}
    \frac{1}{\sqrt{b_{n_i+1}}} \left\| \sum_{k=0}^{n_i} \alpha_{k+1}\begin{bmatrix}G_k - \nabla {f}(\theta_k) \\ H_k - \nabla_\mu v(\theta_k,\mu_k)\end{bmatrix} \right\| \sqrt{b_{n_i+1}} \left\| \begin{bmatrix}\theta^\star-\theta_{n_i+1} \\ \mu^\star-\mu_{n_i+1}\end{bmatrix} \right\| \overset{\text{a.s.}}{\to} 0,
\end{align*}
using Lemma~\ref{lemma:noise:convergence} and \eqref{eq:conv:on:subsequence}. Moreover, 
\begin{align*}
    \left\| \sum_{k=0}^{n_i} \alpha_{k+1}\begin{bmatrix}\nabla {f}(\theta_k) - \nabla {f}(\theta^\star) \\ \nabla_\mu v(\theta_k,\mu_k) - \nabla_\mu v(\theta^\star,\mu^\star)\end{bmatrix} \right\| \left\| \begin{bmatrix}\theta^\star-\theta_{n_i+1} \\ \mu^\star-\mu_{n_i+1}\end{bmatrix} \right\| \leq \sqrt{C} \sqrt{b_{n_i+1}} \left\| \begin{bmatrix}\theta^\star-\theta_{n_i+1} \\ \mu^\star-\mu_{n_i+1}\end{bmatrix} \right\|\overset{\text{a.s.}}{\to} 0,
\end{align*}
where the inequality follows from Lemma~\ref{lemma:gradients:error:bound} and the convergence follows from \eqref{eq:conv:on:subsequence}. Finally, 
\begin{align*}
    \inner{\sum_{k=0}^n \alpha_{k+1} \begin{bmatrix}\nabla{f}(\theta^\star) \\ \nabla_\mu v(\theta^\star,\mu^\star)\end{bmatrix}}{\begin{bmatrix}\theta^\star-\theta_{n_i+1} \\ \mu^\star-\mu_{n_i+1}\end{bmatrix}} \leq 0
\end{align*}
follows from the first-order optimality conditions for $(\theta^\star,\mu^\star)$ in \eqref{eq:so} and \eqref{eq:optimal:IS:2}. The last three displays imply that, with probability one,
\begin{align}
\label{eq:bounds:R_n+1:first:term}
    \limsup_{i \to \infty}\; \inner{\sum_{k=0}^{n_i} \alpha_{k+1} \begin{bmatrix}G_k \\ H_k \end{bmatrix}}{\begin{bmatrix}\theta^\star-\theta_{n_i+1} \\ \mu^\star-\mu_{n_i+1}\end{bmatrix}} \leq 0.
\end{align}
Now, from \eqref{eq:bounds:R_n+1:last:two:terms} and \eqref{eq:bounds:R_n+1:first:term}, we have that $R_{n_i} \overset{\text{a.s.}}{\to} 0$, and since $R_n \overset{\text{a.s.}}{\to} R_\infty$, we obtain $R_\infty = 0$. This guarantees the desired convergence \eqref{eq:as:conv:theta:mu}, and concludes the proof of Theorem~\ref{thm:as:conv}.
\end{proof}

Proving the almost sure converge of iterates in SA algorithms is by now a relatively standard procedure, which generally employs the use of results such as Theorem 1 in \citet{robbins1971convergence} (see Lemma~\ref{lemma:robbins:siegmund}). Interestingly, the joint NDA iteration \eqref{eq:NDA:SA+IS:iteration} introduces a technical challenge that is generally not present in the existing proofs in the literature. We detail this in the following remark.

\begin{remark}[No time-scale separation]
\label{remark:as:conv:identification:active}
As reported in Lemma~\ref{lemma:robbins:siegmund}, the Robbins and Siegmund theorem requires the existence of four sequences of \emph{nonnegative} random variables $R_n$, $A_n$, $B_n$, $C_n$ which are adapted to a filtration $\mathcal F_n$, and which satisfy the ``almost supermartingale'' property
\begin{align*}
    \mathbb E[R_{n+1}| \mathcal F_{n}] \leq (1+A_n)R_n + B_n - C_n.
\end{align*}
For the joint NDA iteration \eqref{eq:NDA:SA+IS:iteration}, the standard choice in the literature for $R_n$ (see, e.g., \cite[Theorem~2]{duchi2021asymptotic}) leads to the following term as a natural candidate for $C_n$:
\begin{align}
\label{eq:as:conv:identification:active:1}
    \alpha_{n+1}\inner{\begin{bmatrix}\nabla {f}(\theta_n) \\ \nabla_\mu v(\theta_n,\mu_n)\end{bmatrix}}{\begin{bmatrix}\theta_{n}-\theta^\star \\ \mu_{n}-\mu^\star \end{bmatrix}}.
\end{align}
However, in contradistinction to the standard literature, in our case this term is not guaranteed to be nonnegative if $\theta_n \neq \theta^\star$ (for $\theta_n = \theta^\star$ the nonnegativity follows from the first-order optimality conditions in \eqref{eq:so}). In practical terms, the condition $\theta_n = \theta^\star$ requires the iterates $\theta_n$ to have converged to the optimal $\theta^\star$ when the iterates $\mu_n$ are potentially very far from the optimal IS parameter $\mu^\star$. Clearly, if such a time-scale separation were to hold, the entire IS scheme would be useless in practice. Importantly, in Theorem~\ref{thm:as:conv} we show that such time-scale separation is not necessary as long as Assumptions~\ref{assump:as:conv}\ref{assump:as:conv:3}-\ref{assump:as:conv:4} hold and the set $\mathcal M$ is bounded with $\diam(\mc M) \defn \max_{\mu, \mu'\in \mc M}\norm{\mu-\mu'}<\infty$. The key steps in proving this build on the observation that the term \eqref{eq:as:conv:identification:active:1} can be lower bounded by the following sum:
\begin{align*}
    \alpha_{n+1}\inner{\begin{bmatrix}\nabla {f}(\theta_n) \\ \nabla_\mu v(\theta^\star,\mu_n)\end{bmatrix}}{\begin{bmatrix}\theta_{n}-\theta^\star \\ \mu_{n}-\mu^\star \end{bmatrix}} - \diam(\mc M) \alpha_{n+1}\left\|{\nabla_\mu v(\theta_n,\mu_n)-\nabla_\mu v(\theta^\star,\mu_n)}\right\|,
\end{align*}
where the first term is nonnegative (and therefore is a proper candidate for $C_n$) and the second term can be upper bounded by a ``well-behaved'' (i.e., summable) term under Assumptions~\ref{assump:as:conv}\ref{assump:as:conv:3}-\ref{assump:as:conv:4}. \hfill $\clubsuit$
\end{remark}


\subsection{Identification of Active and Inactive Constraints}

We will now proceed to prove the two CLTs \eqref{eq:CLT1:conv} and \eqref{eq:CLT2:conv} which show that the proposed coupled NDA iteration \eqref{eq:NDA:SA+IS:iteration} yield the minimal asymptotic variance in the IS class. As a fundamental step in proving this, we fill first show that that the sequence $\{(\theta_n,\mu_n)\}_{n \in \mathbb N}$ identifies the active constraints in \eqref{eq:so} and \eqref{eq:optimal:IS:2} in finite time. For this, we require the following regularity assumptions. Recall that $A^\star_{a} \theta^\star = b^\star_a$ denotes the active constraints in \eqref{eq:so} and $C^\star_a \mu^\star = d^\star_a$ denote the active constraints in \eqref{eq:optimal:IS:2}.

\begin{assumption}[Regularity assumptions~II]~
\label{assump:finite:time:ident}
\begin{enumerate}[label=(\roman*)]
\item \label{assump:finite:time:ident:1} $-\nabla {f}(\theta^\star) = {A_a^\star}^\top \lambda_1$, for some $\lambda_1 \in \mathbb R^{p_1}_{++}$.  
\item \label{assump:finite:time:ident:2} $-\nabla_\mu v(\theta^\star,\mu^\star) = {C_a^\star}^\top \lambda_2$, for some $\lambda_2 \in \mathbb R^{p_2}_{++}$.
\end{enumerate}
\end{assumption}

Assumptions~\ref{assump:finite:time:ident}\ref{assump:finite:time:ident:1}-\ref{assump:finite:time:ident:2} are standard constraint qualifications for constrained optimization problems, with clear geometric meaning. Specifically, Assumptions~\ref{assump:finite:time:ident}\ref{assump:finite:time:ident:1} requires that $-\nabla {f}(\theta^\star)$ should belong to the \emph{relative interior} of the normal cone of $\Theta$ at $\theta^\star$. A similar intuition holds for Assumptions~\ref{assump:finite:time:ident}\ref{assump:finite:time:ident:2}. Finally, notice that Assumption~\ref{assump:finite:time:ident}\ref{assump:finite:time:ident:1} is the same as Assumption~\ref{assump:as:conv}\ref{assump:as:conv:4}. For clarity and to highlight the symmetry between the \eqref{eq:so} and \eqref{eq:optimal:IS:2} problems, we have stated it here as well. 

\begin{proposition}[Finite-time identification of active and inactive constraints]
\label{prop:fin:time:ident}
Let Assumptions~\ref{assump:SO}, \ref{assump:IS}, \ref{assump:as:conv}, and \ref{assump:finite:time:ident} be satisfied, and let $b_{n+1} = \sum_{k=0}^n \alpha_{k+1}$. Then, there exists some random finite $N$ such that $A_{a}^\star \theta_n = b_a^\star$, $A_{i}^\star \theta_n < b_i^\star$ and $C_{a}^\star \mu_n = d_a^\star$, $C_{i}^\star \mu_n < d_i^\star$, for all $n \geq N$.
\end{proposition}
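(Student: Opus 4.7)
The plan is to exploit the separability of the joint NDA iteration \eqref{eq:NDA:SA+IS:iteration}. Because both the quadratic regularizer and the constraint set $\Theta \times \mathcal{M}$ decouple over $(\theta,\mu)$, the update reduces to two parallel Euclidean projections,
\begin{align*}
\theta_{n+1} = \argmin_{\theta \in \Theta} \tfrac{1}{2}\|\theta + s_n^\theta\|^2, \qquad \mu_{n+1} = \argmin_{\mu \in \mathcal{M}} \tfrac{1}{2}\|\mu + s_n^\mu\|^2,
\end{align*}
where $s_n^\theta := \sum_{k=0}^n \alpha_{k+1} G_k$ and $s_n^\mu := \sum_{k=0}^n \alpha_{k+1} H_k$. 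This reduction lets me analyze the identification of the active constraints of \eqref{eq:so} and \eqref{eq:optimal:IS:2} separately, but in sequence---first $\theta$, then $\mu$---in order to circumvent a technical subtlety flagged at the end.

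For the $\theta$-iterate, I would first establish $s_n^\theta/b_{n+1} \to \nabla f(\theta^\star)$ almost surely. Decompose $s_n^\theta = \sum_{k=0}^n \alpha_{k+1} \nabla f(\theta_k) + M_n^\theta$, where $M_n^\theta$ is a martingale adapted to $\mathcal{F}_n$ with predictable quadratic variation bounded by $G_M^2 \sum_k \alpha_{k+1}^2 < \infty$ under Assumption~\ref{assump:IS}\ref{ass:IS:dominated} and $\gamma \in (1/2,1)$; martingale convergence then gives $M_n^\theta \to M_\infty^\theta$ almost surely, so $M_n^\theta/b_{n+1} \to 0$. The drift is handled by combining $\theta_n \to \theta^\star$ (Theorem~\ref{thm:as:conv}), continuity of $\nabla f$, and a Kronecker-type averaging argument. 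The KKT optimality condition for the projection then reads $-s_n^\theta - \theta_{n+1} = A^\top \lambda^{(n)}$ with $\lambda^{(n)} \geq 0$ supported on the constraints active at $\theta_{n+1}$. Dividing by $b_{n+1}$ and passing to the limit yields $A^\top \lambda^{(n)}/b_{n+1} \to -\nabla f(\theta^\star) = {A_a^\star}^\top \lambda_1$, with $\lambda_1 > 0$ by Assumption~\ref{assump:finite:time:ident}\ref{assump:finite:time:ident:1}. Continuity of $\theta_n \to \theta^\star$ keeps every constraint inactive at $\theta^\star$ strictly inactive at $\theta_n$ for large $n$, so $\lambda_i^{(n)} = 0$ at these indices. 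Strict positivity of $\lambda_1$ on the active indices, together with complementary slackness of the projection, then forces every active constraint of $\theta^\star$ to be active at $\theta_n$ from some random $N_1$ onward.

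Once the $\theta$-active set is locked in for $n \geq N_1$, the projector $\text{P}_{A_a^{\theta_n}}$ hidden inside $H_k$ collapses to the constant matrix $\text{P}_{A_a^\star}$, and the drift $\nabla_\mu v(\theta_k,\mu_k) = \mathbb E[\|\text{P}_{A_a^\star} G(\theta_k,X)\|^2 \nabla_\mu \ell(X,\mu_k)]$ becomes jointly continuous in $(\theta_k,\mu_k)$ near $(\theta^\star,\mu^\star)$ by dominated convergence under Assumption~\ref{assump:IS}\ref{ass:IS:dominated}. Repeating the martingale-plus-drift decomposition for $s_n^\mu$ yields $s_n^\mu/b_{n+1} \to \nabla_\mu v(\theta^\star,\mu^\star)$ almost surely, and the same projection--KKT argument---now applied to the $\mathcal{M}$-projection and invoking Assumption~\ref{assump:finite:time:ident}\ref{assump:finite:time:ident:2} to supply the strictly positive dual $\lambda_2$ at $\mu^\star$---produces a random finite $N_2$ beyond which the active constraints of \eqref{eq:optimal:IS:2} are identified. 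Setting $N := \max\{N_1,N_2\}$ delivers the claim.

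The main obstacle, as signalled, is the discontinuity of the map $\theta \mapsto \text{P}_{A_a^\theta}$ across transitions between faces of $\Theta$: applied naively to the $\mu$-iterate the argument breaks, since $\nabla_\mu v(\theta,\mu)$ is not globally continuous in $\theta$. The sequential identification strategy is exactly what resolves this---once $\theta_n$ has latched onto the correct face of $\Theta$ the projector is frozen and the $\mu$-analysis reduces to a standard NDA identification argument in the spirit of \citet{duchi2021asymptotic}.
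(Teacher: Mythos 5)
Your proposal follows the same essential route as the paper's proof: the inactive constraints are identified from the almost sure convergence of Theorem~\ref{thm:as:conv}, and the active ones are identified by showing that the normalized dual-averaging sums converge almost surely to $\nabla f(\theta^\star)$ and $\nabla_\mu v(\theta^\star,\mu^\star)$, which Assumption~\ref{assump:finite:time:ident} writes as $-{A_a^\star}^\top\lambda_1$ and $-{C_a^\star}^\top\lambda_2$ with strictly positive multipliers, after which complementary slackness of the projection forces the active set to be attained. The differences are organizational. The paper applies the identification lemma of \citet{duchi2021asymptotic} (Lemma~\ref{lemma:duchi:4.2}) once, to the joint iteration over $\Theta\times\mathcal M$, and obtains $(v_n,w_n)\to 0$ from Lemma~\ref{lemma:conv:gradients}; the fact that the projector inside $H_k$ eventually freezes is not made explicit in the proposition's proof but is supplied by Lemma~\ref{lemma:as:conv:theta}\ref{lemma:as:conv:theta:5} inside the drift estimate (Lemma~\ref{lemma:gradients:error:bound}, via Assumption~\ref{assump:as:conv}\ref{assump:as:conv:3}). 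You instead exploit separability, run the argument sequentially ($\theta$ first, then $\mu$ with the projector frozen), and re-derive the identification lemma inline from the KKT conditions of the projection. Making the sequential structure explicit is a useful clarification of why the coupling is harmless, and your martingale-plus-Ces\`aro treatment of $s_n^\theta/b_{n+1}$, $s_n^\mu/b_{n+1}$ is a legitimate substitute for Lemmas~\ref{lemma:noise:convergence}--\ref{lemma:conv:gradients}.

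Two steps need tightening. First, the continuity of the drift ``by dominated convergence under Assumption~\ref{assump:IS}\ref{ass:IS:dominated}'' is not backed by the stated assumptions: no pointwise continuity of $\theta\mapsto G(\theta,x)$ or of $\mu\mapsto\nabla_\mu\ell(x,\mu)$ is assumed. You do not need it, however: once the $\theta$-face is identified, Assumption~\ref{assump:as:conv}\ref{assump:as:conv:2} (part (ii.2)) and Assumption~\ref{assump:as:conv}\ref{assump:as:conv:3} give
\begin{align*}
\|\nabla_\mu v(\theta_k,\mu_k)-\nabla_\mu v(\theta^\star,\mu^\star)\|\le c_3\|\theta_k-\theta^\star\|^2+c_2\|\mu_k-\mu^\star\|\overset{\text{a.s.}}{\to}0,
\end{align*}
which is exactly what your averaging step requires (this is how the paper argues). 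Second, your inline KKT conclusion ``strict positivity of $\lambda_1$, together with complementary slackness, forces every active constraint of $\theta^\star$ to be active at $\theta_n$'' is immediate only when the rows of $A_a^\star$ are linearly independent, so that convergence of ${A_a^\star}^\top\lambda_a^{(n)}/b_{n+1}$ yields componentwise convergence of the multipliers; with linearly dependent active rows the projection multipliers are non-unique and the step requires the more careful argument behind Lemma~\ref{lemma:duchi:4.2}, which the paper uses as a black box. Either invoke that lemma (as the paper does, for both blocks) or supply the degenerate-case argument explicitly; the same remark applies to your $\mu$-block with $C_a^\star$ and $\lambda_2$.
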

\begin{proof}
First notice that the identification of the inactive constraints follows immediately from Theorem~\ref{thm:as:conv}. Indeed, since $(\theta_n,\mu_n) \overset{\text{a.s.}}{\to} (\theta^\star,\mu^\star)$, and since $A_{i}^\star \theta^\star < b_i^\star$ and $C_{i}^\star \mu^\star < d_i^\star$, there exists some random finite $N$ such that $A_{i}^\star \theta_n < b_i^\star$ and $C_{i}^\star \mu_n < d_i^\star$, for all $n \geq N$.

We will now show that the procedure~\eqref{eq:NDA:SA+IS:iteration} identifies the active constraints. This proof is a straightforward application of Lemma~\ref{lemma:duchi:4.2}, as explained in what follows. Following a similar reasoning as in \cite[Theorem~3]{duchi2021asymptotic}, we start by rewriting iteration~\eqref{eq:NDA:SA+IS:iteration} as
\begin{align*}
    \begin{bmatrix}\theta_{n+1} \\ \mu_{n+1} \end{bmatrix} = \argmin_{\substack{\theta \in \Theta \\ \mu \in \mathcal M}} \left\{ \inner{\begin{bmatrix}g_1 \\ g_2\end{bmatrix}}{\begin{bmatrix}\theta \\ \mu \end{bmatrix}} + \inner{\begin{bmatrix}v_n \\ w_n \end{bmatrix}}{\begin{bmatrix}\theta \\ \mu \end{bmatrix}} +\frac{1}{2 b_{n+1}} \left\| \begin{bmatrix}\theta \\ \mu \end{bmatrix} \right\|^2\right\},
\end{align*}
with 
\begin{align*}
    \begin{bmatrix}g_1 \\ g_2 \end{bmatrix} = \begin{bmatrix}\nabla {f}(\theta^\star) \\ \nabla_\mu v(\theta^\star,\mu^\star)\end{bmatrix} \quad \text{and} \quad  
    \begin{bmatrix}v_n \\ w_n \end{bmatrix} = \frac{1}{b_{n+1}} \left(\sum_{k=0}^n \alpha_{k+1} \begin{bmatrix}G_k \\ H_k \end{bmatrix} - b_{n+1} \begin{bmatrix}\nabla {f}(\theta^\star) \\ \nabla_\mu v(\theta^\star,\mu^\star)\end{bmatrix} \right).
\end{align*}
Now, from the KKT conditions for $(\theta^\star,\mu^\star)$ we have that there exist $\lambda_1 \in \mathbb R_{+}^{p_1}$ and $\lambda_2 \in \mathbb R_{+}^{p_2}$ such that $\nabla {f}(\theta^\star) + {A_a^\star}^\top \lambda_1 = 0$ and $\nabla_\mu v(\theta^\star,\mu^\star) + {C_a^\star}^\top \lambda_2 = 0$. Moreover, using Assumption~\ref{assump:finite:time:ident} we know that $\lambda_1, \lambda_2$ can be chosen strictly positive. Therefore, $g_1 = -{A_a^\star}^\top \lambda_1$ and $g_2 = - {C_a^\star}^\top \lambda_2$, for some $\lambda_1 \in \mathbb R_{++}^{p_1}$ and $\lambda_2 \in \mathbb R_{++}^{p_2}$. Additionally, using Lemma~\ref{lemma:conv:gradients} and the fact that $b_{n} \to \infty$, we have that $1/b_{n+1} \to 0$ and $(v_n,w_n) \to 0$ almost surely as $n \to \infty$. The result now follows from Lemma~\ref{lemma:duchi:4.2}.
\end{proof}


\subsection{Asymptotic Normality}

Armed with the almost sure convergence in Theorem~\ref{eq:as:conv:theta:mu} and the finite-time identification of the active and inactive constraints in Proposition~\ref{prop:fin:time:ident}, we are now ready to prove the second main result of this paper. For this, we require the following regularity assumptions.

\begin{assumption}[Regularity assumptions~III]~
\label{assump:asymp:norm}
\begin{enumerate}[label=(\roman*)]
    \item \label{assump:asymp:norm:1}  There exist $c,\varepsilon >0$ such that for all $\theta \in \Theta \cap \{\theta:\; \|\theta-\theta^\star\| \leq \varepsilon\}$,
    \begin{align*}
        \left\|\nabla {f}(\theta) -\nabla {f}(\theta^\star) - \nabla^2 {f}(\theta^\star) (\theta-\theta^\star)\right\| \leq c \left\|\theta-\theta^\star \right\|^2.
    \end{align*}

    \item \label{assump:asymp:norm:2} There exist $c,\varepsilon >0$ such that for all $\mu \in \mathcal M \cap \{\mu:\; \|\mu-\mu^\star\| \leq \varepsilon\}$,
    \begin{align*}
        \left\|\nabla_\mu v(\theta^\star,\mu) -\nabla_\mu v(\theta^\star,\mu^\star) - \nabla_\mu^2 v(\theta^\star,\mu^\star) (\mu-\mu^\star)\right\| \leq c \left\|\mu-\mu^\star \right\|^2.
    \end{align*}

    \item \label{assump:asymp:norm:3} There exists $\rho >0$ such that for all $x \in \text{Ker}(A^\star_a)$ and 
    $y \in \text{Ker}(C^\star_a)$,
    \begin{align*}
        x^\top \nabla^2 f(\theta^\star) x \geq \rho \|x\|^2 \quad \text{and} \quad y^\top \nabla_\mu^2 v(\theta^\star,\mu^\star) y \geq \rho \|y\|^2.
    \end{align*}
\end{enumerate}
\end{assumption}

Assumption~\ref{assump:asymp:norm}\ref{assump:asymp:norm:1} is a standard second-order regularity assumption needed to prove the asymptotic normality of SA algorithms \cite{polyak1992acceleration}. In our case, since we 
are dealing with a joint SA procedure, Assumption~\ref{assump:asymp:norm}\ref{assump:asymp:norm:2} is a natural symmetric requirement for the $\mu_n$ iterates. Assumption~\ref{assump:asymp:norm}\ref{assump:asymp:norm:3} is a standard restricted strong convexity assumption needed to prove the asymptotic normality of both the SAA and SA procedures \cite{shapiro1989asymptotic,duchi2021asymptotic}.

Additionally, as in any asymptotic normality study, we need to impose an assumption on the asymptotic negligibility of the martingale difference process. For this, we introduce the following notation. As in the proof of Theorem~\ref{thm:as:conv}, we consider the filtration $\mathcal F_n := \sigma(X_k^{(\mu_{k-1})}, X_k |\, k \leq n)$. Moreover, for ease of notation, we define the noise vector
\begin{align}
\label{eq:def:xi_k}
    \xi_k := \begin{bmatrix}G_k \\ H_k \end{bmatrix} - \begin{bmatrix}\nabla{f}(\theta_k) \\ \nabla_\mu v(\theta_k,\mu_k)\end{bmatrix}.
\end{align}
Notice that $\{\xi_k\}_{k \in \mathbb N}$ is a martingale difference process adapted to the filtration $\{\mathcal F_{k+1}\}_{k \in \mathbb N}$. We are now ready to state the last set of regularity assumptions.

\begin{assumption}[Regularity assumptions~IV]~
\label{assump:asymp:norm:unif:int}
\begin{enumerate}[label=(\roman*)]
    \item \label{assump:asymp:norm:1:1} For all $t \in \mathbb R^{s+m}$ and $\varepsilon >0$,
    \begin{align*}
        \frac{1}{n}\sum_{k=0}^{n-1} \mathbb E \left[\left(t^\top \xi_k \right)^2 \,\mathbf{1}_{\left\{|t^\top \xi_k| \, > \, \varepsilon \sqrt{n}  \right\}} | \mathcal F_{k}\right] 
        \overset{P}{\to} 
        0.
    \end{align*}

    \item \label{assump:asymp:norm:1:2} For all $t \in \mathbb R^{s+m}$,
    \begin{align}
    \label{eq:assump:asymp:norm:1:2}
        t^\top \left(\frac{1}{n} \sum_{k=0}^{n-1}  \mathbb E \left[ 
        \xi_k \xi_k^\top | \mathcal F_k \right]\right) t 
        \overset{P}{\to} 
        t^\top \begin{bmatrix}
        \text{Var}_{X^{(\mu^\star)} \sim \mathbb P_{\mu^\star}}\left[ G_{\mu^\star}(\theta^\star,X^{(\mu^\star)})\right] & 0 \\ 0 & \text{Var}_{X \sim \mathbb P}\left[ H(\theta^\star, \mu^\star, X)\right]
    \end{bmatrix} t.
    \end{align}
\end{enumerate}
\end{assumption}

Assumption~\ref{assump:asymp:norm:unif:int} is a common assumption needed to ensure that the noise sequence $\{\xi_k\}_{k \in \mathbb N}$ satisfies the CLT $\frac{1}{\sqrt{n}} \sum_{k=0}^{n-1} \xi_k \overset{d}{\to} \mathcal N(0,\Sigma)$, for some $\Sigma \geq 0$. In particular, Assumption~\ref{assump:asymp:norm:unif:int}\ref{assump:asymp:norm:1:1} is the standard conditional Lindeberg condition needed to prove martingale CLTs (see \cite[Chapter~3]{hall2014martingale}). Moreover, Assumption~\ref{assump:asymp:norm:unif:int}\ref{assump:asymp:norm:1:2} guarantees that $\Sigma$ is a block-diagonal matrix with $\text{Var}_{X^{(\mu^\star)} \sim \mathbb P_{\mu^\star}}\left[ G_{\mu^\star}(\theta^\star,X^{(\mu^\star)})\right]$ and $\text{Var}_{X \sim \mathbb P}\left[ H(\theta^\star, \mu^\star, X)\right]$ on the diagonal. 

\begin{remark}[Decomposition of Assumption~\ref{assump:asymp:norm:unif:int}\ref{assump:asymp:norm:1:2}]
\label{assump:decomp}
Due to the independence of $X_{k+1}^{(\mu_k)} \sim \mathbb P_{\mu_k}$ and $X_k \sim \mathbb P$, $\mathbb E \left[\xi_k \xi_k^\top | \mathcal F_k \right]$ is automatically a block-diagonal matrix, with $\text{Var}_{X_{k+1}^{(\mu_{k})} \sim \mathbb P_{\mu_k}}\left[ G_{\mu_k}(\theta_k,X_{k+1}^{(\mu_k)}) | \mathcal F_k\right]$ and $\text{Var}_{X_{k+1} \sim \mathbb P}\left[ H(\theta_k, \mu_k, X_{k+1})| \mathcal F_k\right]$ on the diagonal, where we have used the full expression of the stochastic gradients (instead of just $G_k, H_k$) for clarity. Therefore, the convergence in Assumption~\ref{assump:asymp:norm:unif:int}\ref{assump:asymp:norm:1:2} can be equivalently restated as the following two convergences:
\begin{align*}
    t_1^\top \left(\frac{1}{n} \sum_{k=0}^{n-1} \text{Var}_{X_{k+1}^{(\mu_{k})} \sim \mathbb P_{\mu_k}}\left[ G_{\mu_k}(\theta_k,X_{k+1}^{(\mu_k)}) | \mathcal F_k\right] \right) t_1 
    &\overset{P}{\to} 
    t_1^\top 
    \text{Var}_{X^{(\mu^\star)} \sim \mathbb P_{\mu^\star}}\left[ G_{\mu^\star}(\theta^\star,X^{(\mu^\star)})\right] t_1, \\
    t_2^\top \left(\frac{1}{n} \sum_{k=0}^{n-1} \text{Var}_{X_{k+1} \sim \mathbb P}\left[ H(\theta_k, \mu_k, X_{k+1})| \mathcal F_k\right]\right) t_2 
    &\overset{P}{\to} 
    t_2^\top  \text{Var}_{X \sim \mathbb P}\left[ H(\theta^\star, \mu^\star, X)\right] t_2,
\end{align*}
for all $t_1 \in \mathbb R^s$ and $t_2 \in \mathbb R^m$. \hfill $\clubsuit$
\end{remark}

Before stating the main result of this section, we recall the definition of the averaged iterates $\bar{\theta}_n = n^{-1} \sum_{k=0}^{n-1} \theta_k$ and $\bar{\mu}_n = n^{-1} \sum_{k=0}^{n-1} \mu_k$, and define the covariance matrix
\begin{align}
\label{eq:Sigma12}
    \begin{bmatrix}
        \Sigma_G^\star & 0 \\ 0 & \Sigma_H^\star
    \end{bmatrix}
    \defn\begin{bmatrix}
        \text{Q}^\dagger \text{Var}_{X^{(\mu^\star)} \sim \mathbb P_{\mu^\star}}\left[ G_{\mu^\star}(\theta^\star,X^{(\mu^\star)})\right] \text{Q}^\dagger & 0 \\ 0 & \text{R}^\dagger \text{Var}_{X \sim \mathbb P}\left[ H(\theta^\star, \mu^\star, X)\right] \text{R}^\dagger
    \end{bmatrix}
\end{align}
with $\text{Q} \defn \text{P}_{A^\star_{a}}\nabla^2  f(\theta^\star) \text{P}_{A^\star_{a}}$ and $\text{R} \defn \text{P}_{C^\star_{a}}\nabla_\mu^2 v(\theta^\star,\mu^\star) \text{P}_{C^\star_{a}}$. 

\begin{theorem}[Asymptotic optimality~I]
\label{thm:asymp:norm}
Let Assumptions~\ref{assump:SO}, \ref{assump:IS}, \ref{assump:as:conv}, \ref{assump:finite:time:ident}, \ref{assump:asymp:norm}, and \ref{assump:asymp:norm:unif:int} be satisfied. Then,
\begin{align*}
    \sqrt{n}\, \begin{bmatrix}
        \bar{\theta}_n - \theta^\star \\ \bar{\mu}_n - \mu^\star
    \end{bmatrix}  \overset{d}{\to} \mathcal N \left( \begin{bmatrix} 0 \\ 0 \end{bmatrix}, \begin{bmatrix}
        \Sigma_G^\star & 0 \\ 0 & \Sigma_H^\star
    \end{bmatrix}\right),
\end{align*}
with matrices $\Sigma_G^\star$ and $\Sigma_H^\star$ defined in \eqref{eq:Sigma12}, and where $\theta^\star$ is the optimal solution in \eqref{eq:so} and $\mu^\star$ is the optimal solution in \eqref{eq:optimal:IS:2}.
\end{theorem}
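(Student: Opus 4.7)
The plan is to use the finite-time identification result in Proposition~\ref{prop:fin:time:ident} to reduce, after a random but almost surely finite time $N$, the joint iteration~\eqref{eq:NDA:SA+IS:iteration} to an unconstrained joint Polyak-Ruppert recursion living in $\text{Ker}(A_a^\star) \times \text{Ker}(C_a^\star)$, and then invoke the standard averaging-plus-martingale-CLT machinery of \cite{polyak1992acceleration, duchi2021asymptotic} separately in each block. The block-diagonal structure of the limiting covariance will emerge naturally from the independence of the two sampling streams, captured in Remark~\ref{assump:decomp}.

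I would first condition on $n \geq N$, where $A_a^\star \theta_n = b_a^\star$ and $C_a^\star \mu_n = d_a^\star$ hold identically and the NDA step coincides with the unconstrained PR-SA step projected by $\text{P}_{A_a^\star}$ and $\text{P}_{C_a^\star}$. Decomposing the stochastic gradients as $G_n = \nabla f(\theta_n) + \xi_n^\theta$ and $H_n = \nabla_\mu v(\theta_n,\mu_n) + \xi_n^\mu$ with $\xi_n = (\xi_n^\theta,\xi_n^\mu)$ as in~\eqref{eq:def:xi_k}, I would linearize the drifts around $(\theta^\star, \mu^\star)$ using Assumption~\ref{assump:asymp:norm}\ref{assump:asymp:norm:1}-\ref{assump:asymp:norm:2}, absorb the discrepancy $\nabla_\mu v(\theta_n,\mu_n) - \nabla_\mu v(\theta^\star,\mu_n)$ into a higher-order residual via Assumption~\ref{assump:as:conv}\ref{assump:as:conv:3}, and arrive at the projected linear recursions
\begin{align*}
  \text{P}_{A_a^\star}(\theta_{n+1} - \theta^\star) &= \text{P}_{A_a^\star}(\theta_n - \theta^\star) - \alpha_{n+1}\bigl( \text{Q}\, (\theta_n - \theta^\star) + \text{P}_{A_a^\star} \xi_n^\theta \bigr) + r_n^\theta, \\
  \text{P}_{C_a^\star}(\mu_{n+1} - \mu^\star) &= \text{P}_{C_a^\star}(\mu_n - \mu^\star) - \alpha_{n+1}\bigl( \text{R}\, (\mu_n - \mu^\star) + \text{P}_{C_a^\star} \xi_n^\mu \bigr) + r_n^\mu,
\end{align*}
with $\text{Q}$ and $\text{R}$ as in the theorem and second-order residuals $r_n^\theta, r_n^\mu$.

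Next, I would handle the martingale noise. Under Assumption~\ref{assump:asymp:norm:unif:int} together with Remark~\ref{assump:decomp}, the conditional covariances $\mathbb E[\xi_n\xi_n^\top \mid \mathcal F_n]$ are block-diagonal, and their empirical average converges in probability to the block-diagonal matrix with blocks $\text{Var}_{X^{(\mu^\star)} \sim \mathbb P_{\mu^\star}}[G_{\mu^\star}(\theta^\star, X^{(\mu^\star)})]$ and $\text{Var}_{X \sim \mathbb P}[H(\theta^\star, \mu^\star, X)]$; combined with the Lindeberg condition, the martingale CLT \cite[Chapter 3]{hall2014martingale} yields $n^{-1/2}\sum_k \xi_k \overset{d}{\to} \mathcal N(0, \Sigma)$ with $\Sigma$ of this form. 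Finally, I would run the standard Polyak-Ruppert averaging argument on each block: using restricted strong convexity from Assumption~\ref{assump:asymp:norm}\ref{assump:asymp:norm:3}, a summation-by-parts manipulation, and the step-size schedule $\alpha_n = \alpha/n^\gamma$ with $\gamma \in (1/2,1)$, I would establish
\begin{equation*}
  \sqrt{n}(\bar\theta_n - \theta^\star) = -\text{Q}^\dagger\, \tfrac{1}{\sqrt{n}}\textstyle\sum_{k=0}^{n-1} \text{P}_{A_a^\star}\xi_k^\theta + o_P(1), \quad \sqrt{n}(\bar\mu_n - \mu^\star) = -\text{R}^\dagger\, \tfrac{1}{\sqrt{n}}\textstyle\sum_{k=0}^{n-1} \text{P}_{C_a^\star}\xi_k^\mu + o_P(1),
\end{equation*}
which, composed with the noise CLT and Slutsky's theorem, yields the stated joint Gaussian limit.

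The main obstacle is controlling the cross-coupling between the two updates. The stochastic gradient $H_n$ has conditional mean $\nabla_\mu v(\theta_n,\mu_n)$, not $\nabla_\mu v(\theta^\star, \mu_n)$, so a priori the $\mu$-dynamics inherits an $O(\|\theta_n - \theta^\star\|)$ drift bias that could both destroy the block-diagonal structure and degrade the averaged rate. Assumption~\ref{assump:as:conv}\ref{assump:as:conv:3} supplies the essential \emph{quadratic} bound $\|\nabla_\mu v(\theta_n,\mu_n) - \nabla_\mu v(\theta^\star,\mu_n)\| \leq c_3\|\theta_n - \theta^\star\|^2$, but only on the active-constraint manifold of~\eqref{eq:so}. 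This makes Proposition~\ref{prop:fin:time:ident} indispensable: without finite-time identification the bound would not apply, and a merely linear bound would not be summable at the rate required for the coupling to vanish from the averaged iterate at order $1/\sqrt{n}$.
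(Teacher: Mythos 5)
Your proposal is correct and follows essentially the same route as the paper: finite-time identification via Proposition~\ref{prop:fin:time:ident}, a projected linearized recursion with the coupling term $\nabla_\mu v(\theta_n,\mu_n)-\nabla_\mu v(\theta^\star,\mu_n)$ controlled by the quadratic bound of Assumption~\ref{assump:as:conv}\ref{assump:as:conv:3} on the active manifold, a block-diagonal martingale CLT from Assumption~\ref{assump:asymp:norm:unif:int}, and the Polyak--Ruppert averaging step (which the paper delegates to the generic result of Lemma~\ref{lemma:duchi:CLT} rather than re-deriving via summation by parts). You also correctly single out the key point that the quadratic coupling bound is only available after constraint identification, which is exactly why Proposition~\ref{prop:fin:time:ident} is indispensable in the paper's argument.
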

\begin{proof}
The proof builds upon Lemma~\ref{lemma:duchi:CLT} in the Appendix. From the KKT conditions for $(\theta_{n+1},\mu_{n+1})$ in \eqref{eq:NDA:SA+IS:iteration}, we have that there exist $\lambda_{A^\star_a,n}, \lambda_{C^\star_a,n}, \lambda_{A^\star_i,n}, \lambda_{C^\star_i,n} \geq 0$ such that
\begin{align*}
    \begin{bmatrix} \theta_{n+1} \\ \mu_{n+1} \end{bmatrix} + \sum_{k=0}^n \alpha_{k+1} \begin{bmatrix}G_k \\ H_k \end{bmatrix} + \begin{bmatrix} A_a^{\star \top} \lambda_{A^\star_a,n} \\ C_a^{\star\top} \lambda_{C^\star_a,n}\end{bmatrix}  + \begin{bmatrix} A_i^{\star\top} \lambda_{A^\star_i,n} \\ C_i^{\star\top} \lambda_{C^\star_i,n} \end{bmatrix}  = 0.
\end{align*}
Therefore, 
\begin{align*}
    \begin{bmatrix} \theta_{n+1} \\ \mu_{n+1} \end{bmatrix} = \begin{bmatrix} \theta_{n} \\ \mu_{n} \end{bmatrix} - \alpha_{n+1} \begin{bmatrix}G_n \\ H_n \end{bmatrix} + \begin{bmatrix} A_a^{\star\top} (\lambda_{A^\star_a,n-1} - \lambda_{A^\star_a,n}) \\ C_a^{\star\top} (\lambda_{C^\star_a,n-1} - \lambda_{C^\star_a,n})\end{bmatrix}  + \begin{bmatrix} A_i^{\star\top} (\lambda_{A^\star_i,n-1} - \lambda_{A^\star_i,n}) \\ C_i^{\star\top} (\lambda_{C^\star_i,n-1} - \lambda_{C^\star_i,n}) \end{bmatrix}.
\end{align*}
Subtracting $(\theta^\star,\mu^\star)$ and pre-multiplying by the block diagonal matrix $\text{diag}(\text{P}_{A^\star_a},\text{P}_{C^\star_a})$, we obtain
\begin{align}
\label{eq:proof:asymp:norm:iter:1}
    \begin{bmatrix} \text{P}_{A^\star_a}(\theta_{n+1} - \theta^\star) \\ \text{P}_{C^\star_a}(\mu_{n+1}-\mu^\star) \end{bmatrix} = \begin{bmatrix} \text{P}_{A^\star_a}(\theta_{n} - \theta^\star) \\ \text{P}_{C^\star_a}(\mu_{n}-\mu^\star) \end{bmatrix} - \alpha_{n+1} \begin{bmatrix} \text{P}_{A^\star_a} G_n \\ \text{P}_{C^\star_a} H_n \end{bmatrix} + \begin{bmatrix} \text{P}_{A^\star_a} A_i^{\star\top} (\lambda_{A^\star_i,n-1} - \lambda_{A^\star_i,n}) \\ \text{P}_{C^\star_a} C_i^{\star\top} (\lambda_{C^\star_i,n-1} - \lambda_{C^\star_i,n}) \end{bmatrix},
\end{align}
where we have used the fact that the fact that $\text{P}_{A_a^\star} {A_a^\star}^\top = 0$ and $\text{P}_{C_a^\star} {C_a^\star}^\top = 0$. We now define
\begin{align*}
    \Delta_{n} &:= \begin{bmatrix} \text{P}_{A^\star_a}(\theta_{n} - \theta^\star) \\ \text{P}_{C^\star_a}(\mu_{n}-\mu^\star) \end{bmatrix}, 
    \quad
    H := \begin{bmatrix} \nabla^2 {f}(\theta^\star) & 0\\ 0 & \nabla_\mu^2 v(\theta^\star,\mu^\star)\end{bmatrix}, 
    \quad
    \xi_n := \begin{bmatrix}G_n - \nabla f(\theta_n) \\ H_n - \nabla_\mu v(\theta_n,\mu_n)\end{bmatrix}
     \\
    \zeta_n &:= \begin{bmatrix} \nabla f(\theta_n) - \nabla f(\theta^\star) - \nabla^2 f(\theta^\star)(\theta_n - \theta^\star) \\  \nabla_\mu v(\theta_n,\mu_n) - \nabla_\mu v(\theta^\star,\mu^\star) - \nabla_\mu^2 v(\theta^\star,\mu^\star)(\mu_n - \mu^\star) \end{bmatrix}, \quad
    \text{P} = \begin{bmatrix} \text{P}_{A^\star_a} & 0 \\ 0 & \text{P}_{C^\star_a} \end{bmatrix}
     \\
    \epsilon_n &:= \begin{bmatrix} \text{P}_{A^\star_a} A_i^{\star\top} (\lambda_{A^\star_i,n-1} - \lambda_{A^\star_i,n}) \\ \text{P}_{C^\star_a} C_i^{\star\top} (\lambda_{C^\star_i,n-1} - \lambda_{C^\star_i,n}) \end{bmatrix} - \alpha_{n+1} \begin{bmatrix} \text{P}_{A^\star_a}\nabla^2 {f}(\theta^\star)(\text{I}- \text{P}_{A^\star_a}) (\theta_{n} - \theta^\star) \\ \text{P}_{C^\star_a} \nabla_\mu^2 v(\theta^\star,\mu^\star)(\text{I}- \text{P}_{C^\star_a})(\mu_{n}-\mu^\star) \end{bmatrix}.
\end{align*}
Using again the facts $\text{P}_{A_a^\star} {A_a^\star}^\top = 0$, $\text{P}_{C_a^\star} {C_a^\star}^\top = 0$ and the optimality conditions $\text{P}_{A_a^\star} \nabla f(\theta^\star) = 0$ and $\text{P}_{C_a^\star} \nabla_\mu v(\theta^\star,\mu^\star) = 0$, it can be easily checked that iteration \eqref{eq:proof:asymp:norm:iter:1} can be rewritten as
\begin{align}
\label{eq:iteration:Delta}
    \Delta_{n+1} = \Delta_n - \alpha_{n+1} \text{P} H \text{P} \Delta_n - \alpha_{n+1} \text{P} (\xi_n + \zeta_n) + \epsilon_n.
\end{align}
Now, using the finite-time identification of the active constraints in Proposition~\ref{prop:fin:time:ident}, we have that $\text{P}_{A^\star_a}(\theta_{n} - \theta^\star) = \theta_{n} - \theta^\star$ and $\text{P}_{C^\star_a}(\mu_{n}-\mu^\star) = \mu_{n}-\mu^\star$ with probability one for large enough $N$. Consequently, iteration~\eqref{eq:iteration:Delta} is in the form required by Lemma~\ref{lemma:duchi:CLT}. Therefore, in order to conclude the desired result, we need to verify that the Assumptions~\ref{assump:generic:CLT:1} and \ref{assump:generic:CLT:2} required by Lemma~\ref{lemma:duchi:CLT} are satisfied. 

We start with Assumption~\ref{assump:generic:CLT:1}. First, from Assumption~\ref{assump:asymp:norm}\ref{assump:asymp:norm:3}, we know that there exists $\rho > 0$ such that for all $w \in \mathcal T$, with
\begin{align*}
    \mathcal T := \left\{ w \in \mathbb R^{s+m}:\; \begin{bmatrix} A^\star_a & 0 \\ 0 & C^\star_a \end{bmatrix} w = 0 \right\},
\end{align*}
we have $w^\top H w \geq \rho \|w\|^2$. Recall that $G_n, H_n$ are adapted to the filtration $\mathcal F_{n+1} = \sigma(X_k^{(\mu_{k-1})}, X_k |\, k \leq n+1)$. Therefore, using Assumption~\ref{assump:IS}\ref{ass:IS:dominated}, we have that $\mathbb E[\|G_n\|^2 | \mathcal F_n] \leq G_M^2$ and $\mathbb E[\|H_n\|^2 | \mathcal F_n] \leq H_M^2$. Additionally, using Jensen's inequality, we have that $\|\nabla {f}(\theta_n)\|^2 \leq G_M^2$ and $\|\nabla_\mu v(\theta_n,\mu_n)\|^2 \leq H_M^2$. Therefore, for all $n \in \mathbb N$, $\mathbb E \left[ \|\xi_n\|^2 | \mathcal F_n\right] \leq 2 (G_M^2 + H_M^2)$. Finally, from Lemma~\ref{lemma:CLT:gradient:error} we know that 
\begin{align*}
    \frac{1}{\sqrt{n}} \sum_{k=0}^{n-1} \xi_k \overset{d}{\to} \mathcal N \left( \begin{bmatrix} 0 \\ 0 \end{bmatrix}, \begin{bmatrix}
        \text{Var}_{X^{(\mu^\star)} \sim \mathbb P_{\mu^\star}}\left[ G_{\mu^\star}(\theta^\star,X^{(\mu^\star)})\right] & 0 \\ 0 & \text{Var}_{X \sim \mathbb P}\left[ H(\theta^\star, \mu^\star, X)\right]
    \end{bmatrix}\right).
\end{align*}

We now focus on Assumption~\ref{assump:generic:CLT:2}, and start by showing that $n^{-1/2} \sum_{k=0}^{n-1} \|\text{P} \zeta_k\| \overset{\text{a.s.}}{\to} 0$. We first rewrite $\zeta_n$ as
\begin{align*}
    \zeta_n &:= \begin{bmatrix} \nabla {f}(\theta_n) - \nabla {f}(\theta^\star) - \nabla^2 {f}(\theta^\star)(\theta_n - \theta^\star) \\  \nabla_\mu v(\theta^\star,\mu_n) - \nabla_\mu v(\theta^\star,\mu^\star) - \nabla_\mu^2 v(\theta^\star,\mu^\star)(\mu_n - \mu^\star) \end{bmatrix} + \begin{bmatrix} 0 \\  \nabla_\mu v(\theta_n,\mu_n) - \nabla_\mu v(\theta^\star,\mu_n) \end{bmatrix}.
\end{align*}
Then, using Assumption~\ref{assump:asymp:norm}\ref{assump:asymp:norm:1}-\ref{assump:asymp:norm:2}, it can be immediately established that
\begin{align*}
    \left\| \begin{bmatrix} \nabla {f}(\theta_n) - \nabla {f}(\theta^\star) - \nabla^2 {f}(\theta^\star)(\theta_n - \theta^\star) \\  \nabla_\mu v(\theta^\star,\mu_n) - \nabla_\mu v(\theta^\star,\mu^\star) - \nabla_\mu^2 v(\theta^\star,\mu^\star)(\mu_n - \mu^\star) \end{bmatrix} \right\| \leq c \left\| \begin{bmatrix}
        \theta_n - \theta^\star \\ \mu_n - \mu^\star
    \end{bmatrix} \right\|^2.
\end{align*}
Moreover, using Assumption~\ref{assump:as:conv}\ref{assump:as:conv:3}, we have that
\begin{align*}
    \left\| \begin{bmatrix} 0 \\  \nabla_\mu v(\theta_n,\mu_n) - \nabla_\mu v(\theta^\star,\mu_n) \end{bmatrix} \right\| \leq c_3 \|\theta_n -\theta^\star \|^2 \leq c_3 \left\| \begin{bmatrix}
        \theta_n - \theta^\star \\ \mu_n - \mu^\star
    \end{bmatrix} \right\|^2,
\end{align*}
where the first inequality holds with probability one for large enough $n$ using Proposition~\ref{prop:fin:time:ident}. Using these bounds and the fact that $\|\text{P} \zeta_n\| \leq \|\zeta_n\|$ (since the projection operator is non-expansive), we obtain
\begin{align*}
    0 \leq \frac{1}{\sqrt{n}} \sum_{k=0}^{n-1} \|\text{P} \zeta_k\| \leq (c+c_3) \frac{1}{\sqrt{n}} \sum_{k=0}^{n-1} \left\| \begin{bmatrix}
        \theta_n - \theta^\star \\ \mu_n - \mu^\star
    \end{bmatrix} \right\|^2 \overset{\text{a.s.}}{\to} 0,
\end{align*}
where the convergence follows from Lemma~\ref{lemma:assump:conv:CLT}. 

Finally, in order to conclude the proof of Assumption~\ref{assump:generic:CLT:2} we only need to show that there exists a random variable $N < \infty$ such that $\epsilon_n = 0$ for $n \geq N$ (notice that the remaining two conditions are proven Theorem~\ref{thm:as:conv} and Lemma~\ref{lemma:assump:conv:CLT}). But this follows immediately from Proposition~\ref{prop:fin:time:ident}, as we now detail. Since there exists some random finite $N$ such that $A^\star_{a} \theta_n = b^\star_a$ and $C^\star_{a} \mu_n = d^\star_a$ for $n \geq N$, we have that $(\text{I}- \text{P}_{A^\star_a}) (\theta_{n} - \theta^\star) = 0$ and $(\text{I}- \text{P}_{C^\star_a})(\mu_{n}-\mu^\star) = 0$ for $n \geq N$. Moreover, since $A^\star_{i} \theta_n < b^\star_i$ and $C^\star_{i} \mu_n < d^\star_i$ for $n \geq N$, we have by complementary slackness that $\lambda_{A^\star_i,n} = \lambda_{C^\star_i,n} = 0$ for $n \geq N$. This concludes the proof of Assumption~\ref{assump:generic:CLT:2}, and with it the proof of Theorem~\ref{thm:asymp:norm}.
\end{proof}

Theorem~\ref{thm:asymp:norm} establishes the asymptotic normality of the coupled NDA procedure~\eqref{eq:NDA:SA+IS:iteration}. We highlight that the asymptotic covariance matrix~\eqref{eq:Sigma12} is block-diagonal, which is a natural consequence of the fact that the two stochastic gradients, $G_k$ and $H_k$, are sampled independently from the distributions $\mathbb P_{\mu_k}$ and $\mathbb P$, respectively. Importantly, from the asymptotic normality of the joint iterates, we can deduce the asymptotic optimality of the procedure for the iterates $\bar{\theta}_n$. Indeed, since the joint convergence in distribution implies the marginal convergence in distribution, we immediately recover the CLT
\begin{align*}
    \sqrt{n}\left( \bar{\theta}_n - \theta^\star \right) \overset{d}{\to} \mathcal N \left(0, \text{Q}^{\dagger}\, \text{Var}_{X^{(\mu^\star)} \sim \mathbb P_{\mu^\star}}\left[G_{\mu^\star}(\theta^\star, X^{(\mu^\star)})\right] \text{Q}^{\dagger} \right),
\end{align*}
with $\text{Q}:=\text{P}_{A^\star_{a}}\nabla^2  f(\theta^\star) \text{P}_{A^\star_{a}}$. Finally, invoking Assumption~\ref{assump:SO}\ref{assump:SO:twice:differentiable}, the delta method yields the projected gradient CLT as stated below.

\begin{corollary}[Asymptotic optimality~II]
\label{cor:asymp:norm}
Consider the setting of Theorem~\ref{thm:asymp:norm}. Then,
\begin{align*}
    \sqrt{n}\, \text{P}_{A^\star_{a}} \nabla f(\bar{\theta}_n)  \overset{d}{\to} \mathcal N \left(0, \text{Var}_{X^{(\mu^\star)} \sim \mathbb P_{\mu^\star}}\left[\text{P}_{A^\star_{a}} G_{\mu^\star}(\theta^\star, X^{(\mu^\star)})\right] \right).
\end{align*}
where $\theta^\star$ is the optimal solution in \eqref{eq:so} and $\mu^\star$ the optimal solution in (\ref{eq:optimal:IS:2}) satisfying
\begin{align*}
    \mu^\star = \argmin_{\mu \in \mathcal M} \trace\left(\text{Var}_{X^{(\mu)} \sim \mathbb P_\mu}\left[\text{P}_{A^\star_{a}} G_\mu(\theta^\star, X^{(\mu)})\right]\right).
\end{align*}
\end{corollary}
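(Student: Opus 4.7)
The corollary is a direct consequence of Theorem~\ref{thm:asymp:norm} combined with the delta method, so my plan is to essentially replay the argument in Lemma~\ref{lemma:kkt-residual} but with the improved asymptotic variance that Theorem~\ref{thm:asymp:norm} provides. The first step is to extract the marginal CLT from the joint convergence proved in Theorem~\ref{thm:asymp:norm}: since convergence in distribution of the pair $(\bar\theta_n,\bar\mu_n)$ implies convergence of each marginal, we immediately obtain
\begin{align*}
    \sqrt{n}\left( \bar{\theta}_n - \theta^\star \right) \overset{d}{\to} \mathcal N \left(0, \text{Q}^{\dagger}\, \text{Var}_{X^{(\mu^\star)} \sim \mathbb P_{\mu^\star}}\!\left[G_{\mu^\star}(\theta^\star, X^{(\mu^\star)})\right] \text{Q}^{\dagger} \right),
\end{align*}
with $\text{Q}=\text{P}_{A^\star_{a}}\nabla^2  f(\theta^\star) \text{P}_{A^\star_{a}}$.

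Next, I would apply the delta method to the map $h(\theta) := \text{P}_{A^\star_{a}}\nabla f(\theta)$. Assumption~\ref{assump:SO}\ref{assump:SO:twice:differentiable} ensures that $h$ is continuously differentiable in a neighborhood of $\theta^\star$, with Jacobian $J=\text{P}_{A^\star_{a}} \nabla^2 f(\theta^\star)$. Moreover, Fact~\ref{fact:optimality:conditions} gives $h(\theta^\star)=\text{P}_{A^\star_{a}}\nabla f(\theta^\star)=0$. The delta method therefore yields
\begin{align*}
    \sqrt{n}\, \text{P}_{A^\star_{a}} \nabla f(\bar{\theta}_n) \overset{d}{\to} \mathcal N\!\left(0,\;\text{P}_{A^\star_{a}}\nabla^2 f(\theta^\star)\, \text{Q}^{\dagger} \text{Var}_{X^{(\mu^\star)}\sim \mathbb P_{\mu^\star}}\!\left[G_{\mu^\star}(\theta^\star, X^{(\mu^\star)})\right] \text{Q}^{\dagger}\,\nabla^2 f(\theta^\star)\text{P}_{A^\star_{a}}\right).
\end{align*}

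The remaining step is the sandwich simplification, which is the only slightly delicate piece but which was already carried out inside the proof of Lemma~\ref{lemma:kkt-residual}. Using the facts established there, namely $\text{P}_{A^\star_{a}}\text{Q}^{\dagger}=\text{Q}^{\dagger}=\text{Q}^{\dagger}\text{P}_{A^\star_{a}}$ and $\text{Q}\text{Q}^{\dagger}=\text{Q}^{\dagger}\text{Q}=\text{P}_{A^\star_{a}}$, I can write
\begin{align*}
    \text{P}_{A^\star_{a}}\nabla^2 f(\theta^\star)\,\text{Q}^{\dagger} = \text{P}_{A^\star_{a}}\nabla^2 f(\theta^\star)\, \text{P}_{A^\star_{a}} \text{Q}^{\dagger} = \text{Q} \text{Q}^{\dagger} = \text{P}_{A^\star_{a}},
\end{align*}
and symmetrically for the right factor. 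The asymptotic covariance therefore collapses to $\text{P}_{A^\star_{a}} \text{Var}_{X^{(\mu^\star)}\sim \mathbb P_{\mu^\star}}[G_{\mu^\star}(\theta^\star, X^{(\mu^\star)})] \text{P}_{A^\star_{a}} = \text{Var}_{X^{(\mu^\star)}\sim \mathbb P_{\mu^\star}}[\text{P}_{A^\star_{a}}G_{\mu^\star}(\theta^\star, X^{(\mu^\star)})]$, which is exactly the target expression. The characterization of $\mu^\star$ as the minimizer of $\trace\bigl(\text{Var}_{X^{(\mu)} \sim \mathbb P_\mu}[\text{P}_{A^\star_{a}} G_\mu(\theta^\star, X^{(\mu)})]\bigr)$ is just the restatement of problem~\eqref{eq:optimal:IS:2}, as already noted in equation~\eqref{eq:CLT2:conv}. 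No step here presents a real obstacle; the only thing to be careful about is invoking Lemma~\ref{lemma:kkt-residual}'s pseudoinverse identities rather than re-deriving them.
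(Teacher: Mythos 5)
Your proposal is correct and follows essentially the same route as the paper: the paper's proof extracts the marginal CLT for $\sqrt{n}(\bar\theta_n-\theta^\star)$ from Theorem~\ref{thm:asymp:norm} and then invokes Lemma~\ref{lemma:kkt-residual}, whose internal argument (delta method applied to $\theta\mapsto \text{P}_{A^\star_a}\nabla f(\theta)$ plus the identities $\text{Q}\text{Q}^\dagger=\text{Q}^\dagger\text{Q}=\text{P}_{A^\star_a}$) is exactly what you replay with the IS variance in place of the nominal one. Your unpacking of that lemma is accurate, so there is no gap.
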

\begin{proof}
The result follows from Lemma~\ref{lemma:kkt-residual}, after noticing that Theorem~\ref{thm:asymp:norm} readily implies the CLT
\begin{align*}
    \sqrt{n}\,(\bar{\theta}_n - \theta^\star) \overset{d}{\to} \mathcal N (0, (\text{P}_{A^\star_{a}}\nabla^2 f(\theta^\star) \text{P}_{A^\star_{a}})^\dagger \text{Var}_{X^{(\mu^\star)} \sim \mathbb P_{\mu^\star}}\left[G_{\mu^\star}(\theta^\star, X^{(\mu^\star)})\right] (\text{P}_{A^\star_{a}}\nabla^2 f(\theta^\star) \text{P}_{A^\star_{a}})^\dagger).
\end{align*}
\end{proof}

This concludes the asymptotic analysis of the coupled NDA procedure. Together, Theorem~\ref{thm:asymp:norm} and Corollary~\ref{cor:asymp:norm} establish the asymptotic normality of the averaged iterates and the projected gradient, under the regularity conditions introduced above. These results verify the asymptotic efficiency of the proposed algorithm when combined with adaptive importance sampling.


\section{Numerical Illustration}
\label{sec:numerics}

We illustrate the behavior of the proposed joint stochastic approximation and importance sampling scheme on a simple yet revealing instance of the quantile estimation problem from Example~\ref{example:nqe}. The goal is to highlight both adaptivity and asymptotic variance reduction in a rare-event regime.

\begin{figure}[t]
    \centering
    \begin{subfigure}[t]{0.49\linewidth}
        \centering
        \includegraphics[width=\linewidth]{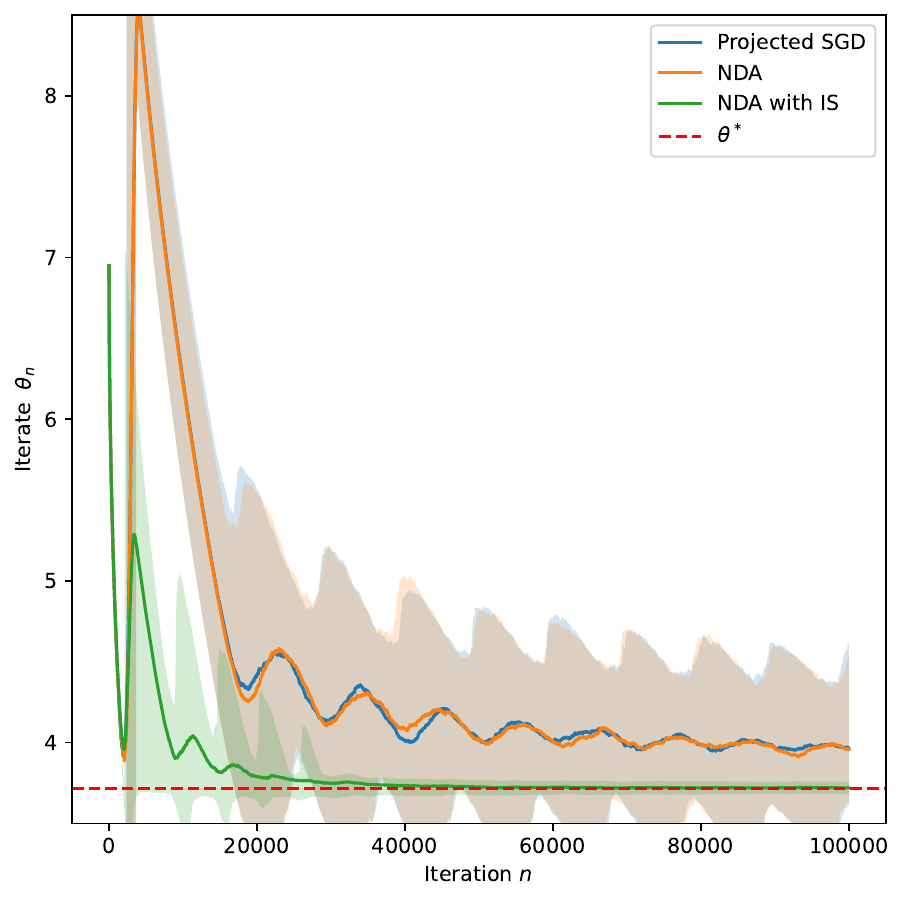}
        \caption{Raw iterates $\theta_n$}
    \end{subfigure}\hfill
    \begin{subfigure}[t]{0.49\linewidth}
        \centering
        \includegraphics[width=\linewidth]{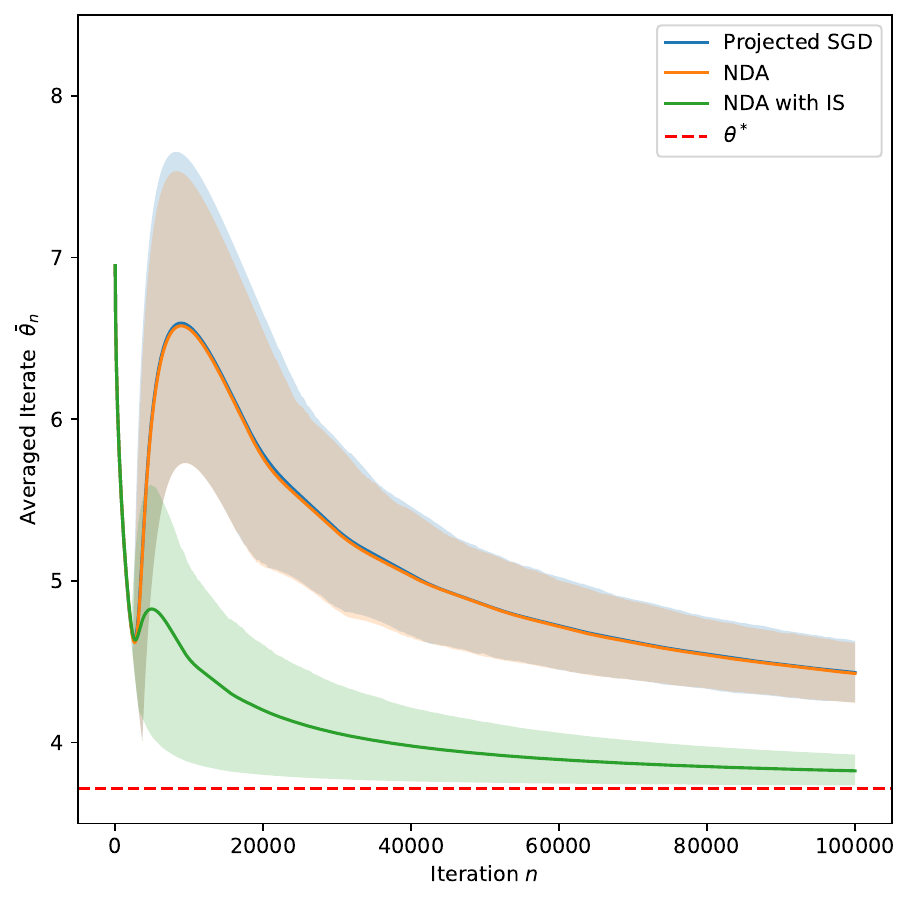}
        \caption{Averaged iterates $\bar\theta_n$}
    \end{subfigure}
    \caption{Evolution of the decision variable. The dashed red line indicates $\theta^\star$. NDA with IS exhibits rapid variance contraction once the IS parameter adapts, whereas Projected SGD and vanilla NDA do not exhibit comparable concentration.}
    \label{fig:theta}
\end{figure}

\medskip
\noindent
\textbf{Setup.}
We consider the estimation of the $0.9999$-quantile of a standard normal random variable $X \sim \mathcal N(0,1)$, formulated as the stochastic optimization problem in Example~\ref{example:nqe}. The unique minimizer is $\theta^\star \approx 3.72$. The decision variable is constrained to $\Theta=[-10,10]$, which is sufficiently large so that projection never becomes active in the simulation. In contrast, the importance sampling (IS) parameter is constrained to $\mathcal M=[-1.7,1.7]$. This choice is deliberate: for this problem, the unconstrained optimal exponential tilting parameter is larger (typically around $2$), but restricting to $\mathcal M$ forces the constrained optimum to lie at the boundary $\mu^\star=1.7$. This allows us to explicitly observe finite-time identification of the active constraint for the IS parameter, as predicted by Proposition~\ref{prop:fin:time:ident}. The IS family is exponential tilting, which in this setting corresponds to sampling
\[
    X_{n+1}^{(\mu_n)} \sim \mathcal N(\mu_n,1)
\]
at iteration $n$. The decision and IS parameters are updated using the NDA recursion \eqref{eq:NDA:SA+IS:iteration:extension}, with stepsizes of the form $\alpha_k=\alpha_0 k^{-\gamma}$, where $\gamma=0.55$, $\alpha_0=0.05$ for the $\theta$-update, and $\alpha_0=3\times 10^{-6}$ for the $\mu$-update. We initialize at $\theta_0=7$ and $\mu_0=0.2$. All plots are based on $1000$ independent trajectories. We display the empirical mean together with the $10\%$--$90\%$ quantile band.

\medskip
\noindent
\textbf{Decision variable dynamics.}
Figure~\ref{fig:theta} reports the evolution of the decision variable. Figure~\ref{fig:theta}(left) shows the raw iterates $\theta_n$, while Figure~\ref{fig:theta}(right) shows the averaged iterates $\bar\theta_n$. For reference, we include Projected SGD, vanilla NDA (without IS), and the proposed NDA with adaptive IS. For the raw iterates, NDA with IS initially exhibits relatively high variability, reflecting the exploratory phase during which the IS parameter is still adapting. Once the IS parameter approaches its optimal value, however, the variance of $\theta_n$ contracts rapidly and the iterates concentrate near $\theta^\star$ after roughly $4\times 10^4$ iterations. In contrast, both Projected SGD and vanilla NDA retain substantial variance and do not exhibit comparable concentration around $\theta^\star$ even after $10^5$ iterations (and similarly for longer runs up to $2\times 10^5$ iterations). Turning to the averaged iterates, we observe a similar trend. While the early variability of $\theta_n$ is naturally inherited by $\bar\theta_n$, the adaptive IS scheme nonetheless leads to significantly faster stabilization and lower dispersion. This behavior is consistent with Theorem~\ref{thm:as:conv}, which guarantees convergence of both $\theta_n$ and $\bar\theta_n$ to $\theta^\star$, and illustrates how variance reduction accelerates entry into the asymptotic regime.

\begin{figure}[t]
\centering
\begin{minipage}[t]{0.49\linewidth}
\centering
\includegraphics[width=\linewidth]{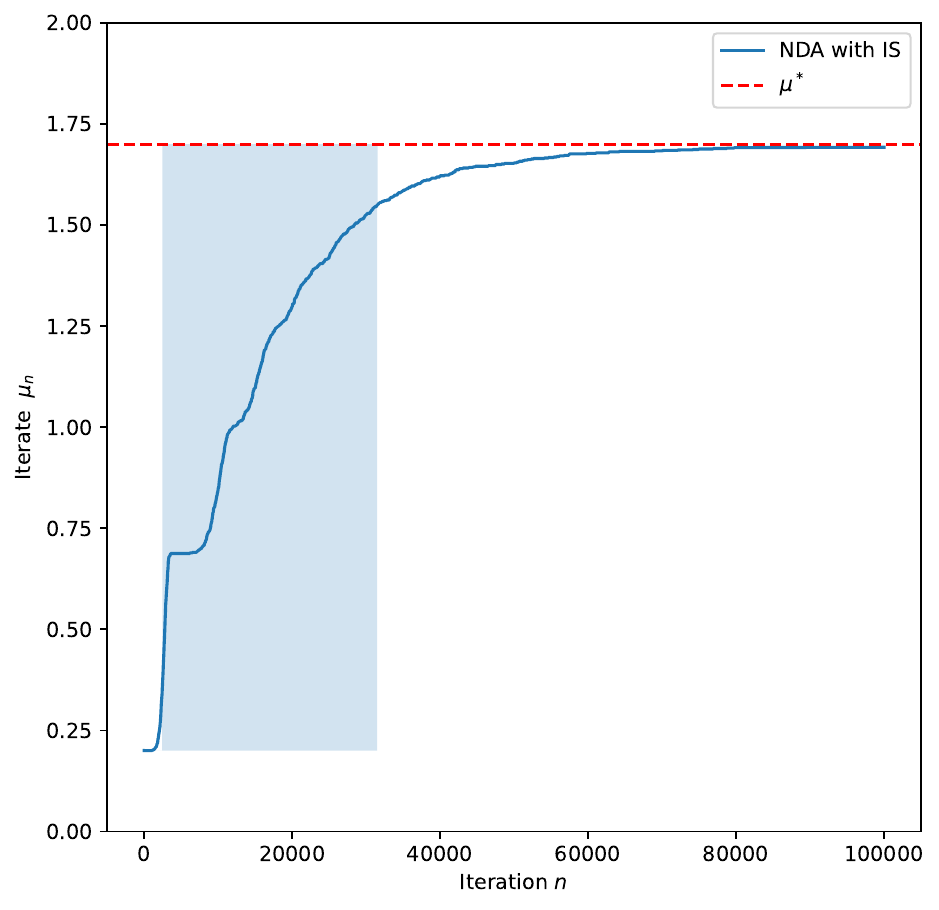}
\subcaption{Raw iterates $\mu_n$}
\end{minipage}\hfill
\begin{minipage}[t]{0.49\linewidth}
\centering
\includegraphics[width=\linewidth]{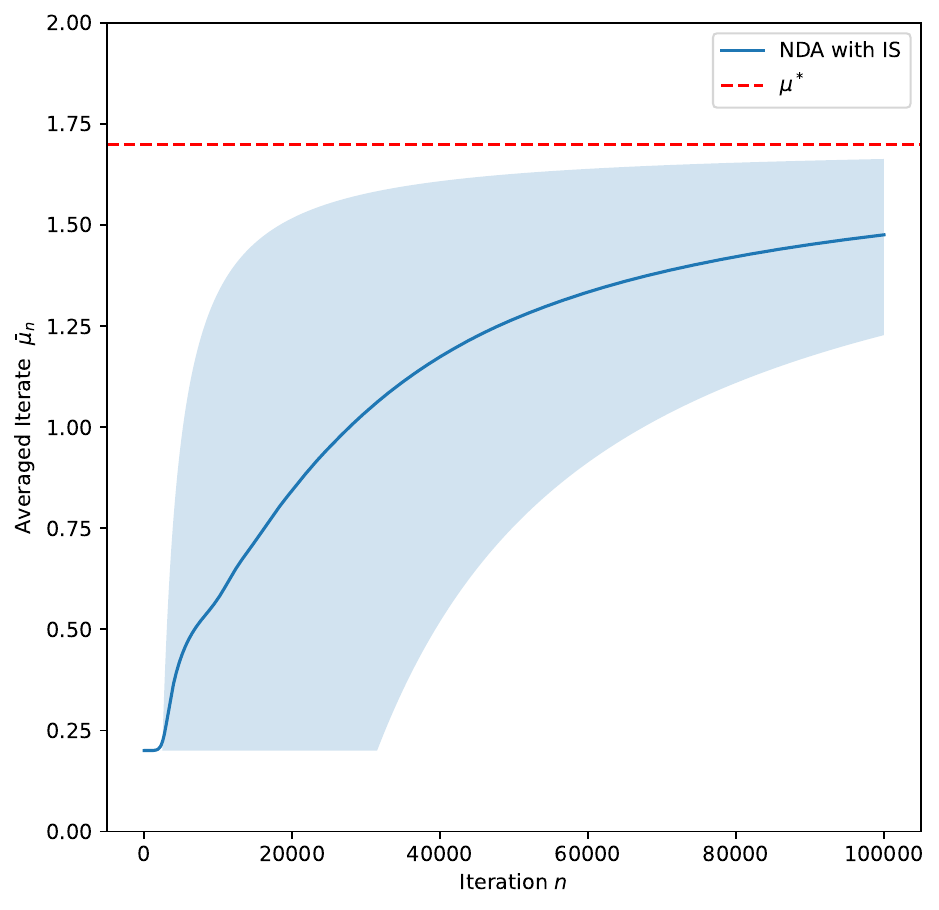}
\subcaption{Averaged iterates $\bar\mu_n$}
\end{minipage}
\caption{Evolution of the importance sampling parameter. The dashed red line indicates $\mu^\star=1.7$, corresponding to the constrained optimum over $\mathcal M$. The IS parameter identifies the active constraint in finite time, as predicted by Proposition~\ref{prop:fin:time:ident}.}
\label{fig:mu}
\end{figure}

\medskip
\noindent
\textbf{Importance sampling parameter dynamics and active constraint identification.}
Figure~\ref{fig:mu} reports the evolution of the importance sampling (IS) parameter. Figure~\ref{fig:mu}(left) shows the raw iterates $\mu_n$, while Figure~\ref{fig:mu}(right) shows the averaged iterates $\bar\mu_n$. Initially, the IS parameter exhibits substantial variability, reflecting the noise inherent in stochastic calibration of the tilting distribution. After a finite number of iterations, however, the iterates identify the active constraint $\mu=1.7$ (the constrained optimum over $\mathcal M$) and remain on this boundary thereafter. This finite-time active constraint identification is precisely the behavior predicted by Proposition~\ref{prop:fin:time:ident}. The averaged IS iterates display larger dispersion due to the influence of early iterations, but converge to the same limiting value. It is important to note that sampling at iteration $n$ is governed by the raw iterate $\mu_n$, since $X_{n+1}^{(\mu_n)} \sim \mathcal N(\mu_n,1)$, and therefore the rapid stabilization and active constraint identification of $\mu_n$ are the critical properties for effective importance sampling. Taken together with Figure~\ref{fig:theta}, these plots demonstrate the adaptivity of the proposed scheme: contraction of the decision iterates occurs simultaneously with stabilization and active constraint identification of the IS parameter, with both processes operating on the same time scale and without any form of time-scale separation, in line with Remark~\ref{remark:as:conv:identification:active}. We note that the residual variance in $\bar\mu_n$ inherited from early iterations could be further reduced by a secondary importance sampling scheme for the IS parameter itself, as discussed in Section~\ref{subsec:secondary:IS}.

\begin{figure}[t]
    \centering
    \begin{minipage}[t]{0.49\linewidth}
        \centering
        \includegraphics[width=\linewidth]{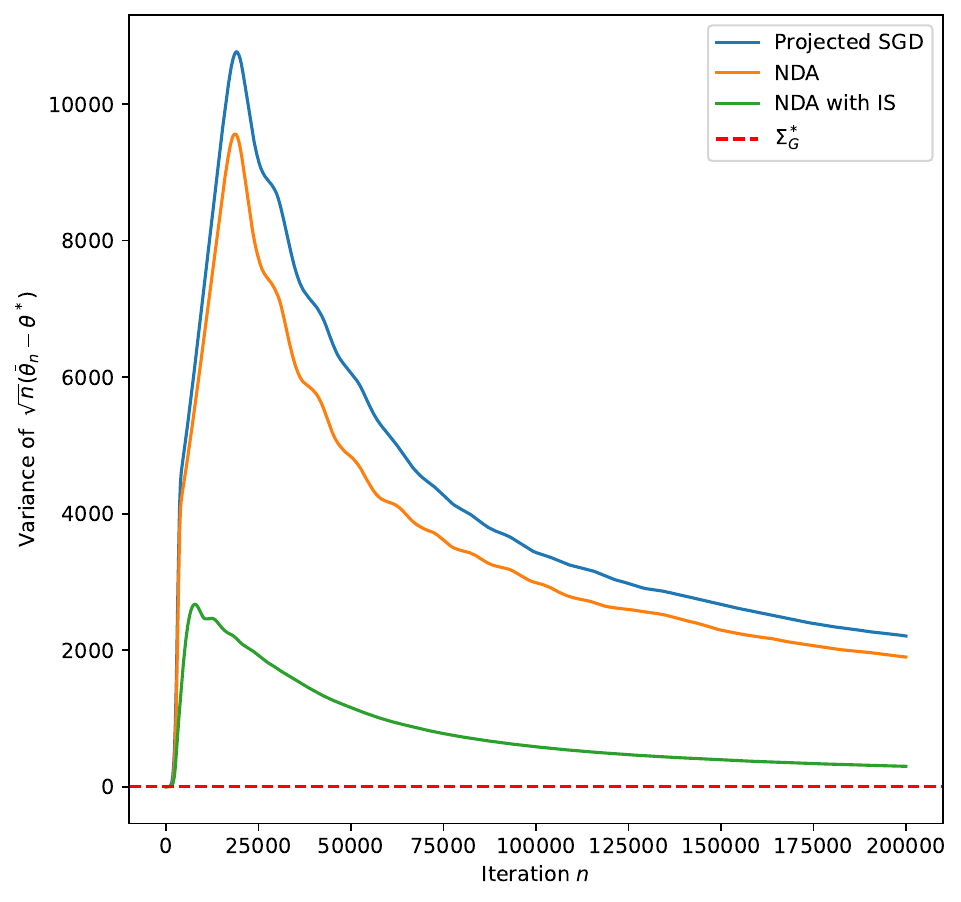}
        \subcaption{Without burn-in}
    \end{minipage}\hfill
    \begin{minipage}[t]{0.49\linewidth}
        \centering
        \includegraphics[width=\linewidth]{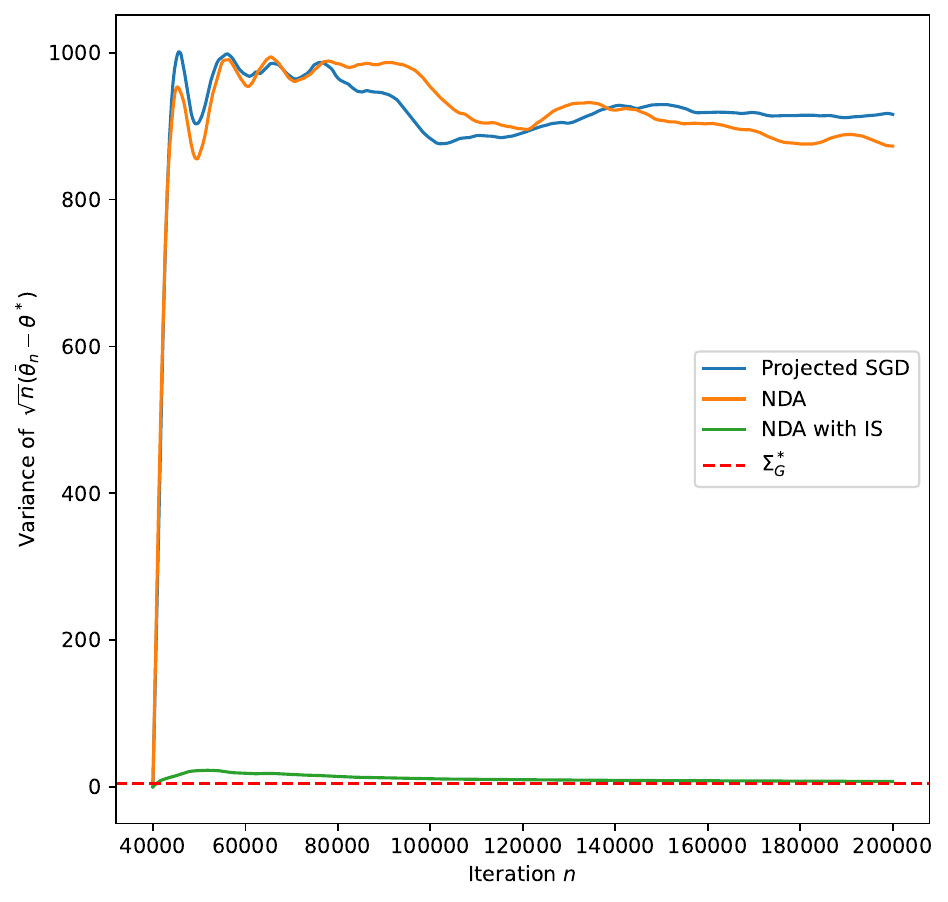}
        \subcaption{With burn-in}
    \end{minipage}
    \caption{Empirical variance of $\sqrt{n}(\bar\theta_n-\theta^\star)$. NDA with adaptive importance sampling attains a variance close to the theoretical optimum, while Projected SGD and vanilla NDA remain orders of magnitude larger.}
    \label{fig:variance}
\end{figure}

\medskip
\noindent
\textbf{Asymptotic variance comparison.}
Finally, Figure~\ref{fig:variance} compares the empirical variance of the scaled error process $\sqrt{n}(\bar\theta_n-\theta^\star)$ across methods. To isolate the asymptotic regime, Figure~\ref{fig:variance}(right) reports variance estimates computed after a burn-in period of $4\times 10^4$ iterations, by which time the importance sampling parameter has stabilized near its constrained optimum and the asymptotic normality regime described in Theorem~\ref{thm:asymp:norm} becomes relevant. In this regime, NDA with adaptive importance sampling empirically attains a variance that is orders of magnitude smaller than those of Projected SGD and vanilla NDA, closely matching the optimal asymptotic variance predicted by Theorem~\ref{thm:asymp:norm}. Even after $2\times 10^5$ iterations, the baseline methods exhibit variances that are roughly three orders of magnitude larger. For completeness, Figure~\ref{fig:variance}(left) reports the same variance estimates without burn-in. While the transient phase leads to slower convergence due to accumulated early variability, the adaptive importance sampling scheme still achieves at least a two-order-of-magnitude improvement over the baselines.




\section*{Acknowledgments}

Liviu Aolaritei acknowledges support from the Swiss National Science Foundation through the Postdoc.Mobility Fellowship (grant agreement P500PT\_222215). Henry Lam acknowledges support from the InnoHK initiative of the Innovation and Technology Commission of the Hong Kong Special Administrative Region Government, and Laboratory for AI-Powered Financial Technologies. Michael I. Jordan was funded in part by the European Union (ERC-2022-SYG-OCEAN-101071601). Views and opinions expressed are however those of the author(s) only and do not necessarily reflect those of the European Union or the European Research Council Executive Agency. Neither the European Union nor the granting authority can be held responsible for them.

We are grateful to Wasin Meesena for running the numerical simulations for this paper.


\bibliographystyle{abbrvnat} 
\bibliography{bibfile.bib}


\appendix

\section{Technical Preliminaries}

\begin{lemma}[{\citet[Theorem 1]{robbins1971convergence}}]
\label{lemma:robbins:siegmund}
Let $R_n$, $A_n$, $B_n$, $C_n$ be nonnegative random variables adapted to a filtration $\mathcal F_n$. Assume that 
\begin{align*}
    \mathbb E[R_{n+1}| \mathcal F_{n}] \leq (1+A_n)R_n + B_n - C_n.
\end{align*}
Then, on the event $\{\sum_n A_n < \infty, \sum_n B_n < \infty\}$, there exists a random variable $R_\infty < \infty$ such that $R_n \overset{\text{a.s.}}{\to} R_\infty$ and $\sum_n C_n < \infty$ almost surely.
\end{lemma}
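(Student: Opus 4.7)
The plan is to reduce the ``almost supermartingale'' recursion to a genuine supermartingale by multiplying through by an $\mathcal F_n$-measurable compensating factor, and then invoke Doob's convergence theorem for supermartingales bounded below. The key observation is that the multiplicative perturbation $1+A_n$ can be absorbed by the discount $a_n \defn \prod_{k=0}^{n-1}(1+A_k)^{-1}$, which is nonincreasing in $n$ and, on the event $\{\sum_n A_n < \infty\}$, converges to a strictly positive limit $a_\infty > 0$ (using $\sum_n \log(1+A_n) \leq \sum_n A_n$).

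First, I would multiply the hypothesized inequality by $a_{n+1}$ to obtain $\mathbb E[a_{n+1} R_{n+1} \mid \mathcal F_n] \leq a_n R_n + a_{n+1} B_n - a_{n+1} C_n$, and then introduce the auxiliary process
$$Z_n \defn a_n R_n + \sum_{k=0}^{n-1} a_{k+1}(C_k - B_k).$$
A direct calculation shows that $\mathbb E[Z_{n+1} \mid \mathcal F_n] \leq Z_n$, so $\{Z_n\}$ is a supermartingale adapted to $\mathcal F_n$. Since $a_k \leq 1$ and $R_n, C_k \geq 0$, one has the pathwise lower bound $Z_n \geq -\sum_{k=0}^{\infty} B_k$, which is finite on the event $\mathcal{E} \defn \{\sum_n A_n < \infty,\; \sum_n B_n < \infty\}$.

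Second, I would carry out the convergence argument on $\mathcal{E}$. By Doob's supermartingale convergence theorem, $Z_n$ converges almost surely to a finite limit on $\mathcal{E}$. Since $\sum_{k=0}^{n-1} a_{k+1} B_k \leq \sum_{k=0}^{n-1} B_k$ also converges there, it follows that $Y_n \defn a_n R_n + \sum_{k=0}^{n-1} a_{k+1} C_k$ converges to a finite limit on $\mathcal{E}$. Both summands in $Y_n$ are nonnegative and $\sum_{k=0}^{n-1} a_{k+1} C_k$ is monotone nondecreasing, so this sum converges to a finite limit, which together with the convergence of $Y_n$ forces $a_n R_n$ to converge as well. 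Combining with $a_n \to a_\infty > 0$ yields $R_n \overset{\text{a.s.}}{\to} R_\infty < \infty$, and combining $\sum_n a_{n+1} C_n < \infty$ with $a_{n+1} \geq a_\infty > 0$ gives $\sum_n C_n < \infty$ almost surely on $\mathcal{E}$, as claimed.

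The main technical subtlety is that the pathwise lower bound on $Z_n$ holds only on $\mathcal{E}$, which need not be of full probability, so one cannot directly apply the supermartingale convergence theorem globally. The standard remedy is a localization argument: introduce the $\mathcal F_n$-stopping times $\tau_N \defn \inf\{n:\; \sum_{k \leq n} A_k > N \text{ or } \sum_{k \leq n} B_k > N\}$ and apply the convergence argument to the stopped supermartingale $Z_{n \wedge \tau_N}$, which is bounded below by the deterministic constant $-N$ uniformly in $n$. Since $\mathcal{E} = \bigcup_{N \in \mathbb N} \{\tau_N = \infty\}$, taking $N \to \infty$ transfers the conclusion back to $\mathcal{E}$.
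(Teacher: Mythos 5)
Your proof is correct and is essentially the original Robbins--Siegmund argument, which the paper itself does not reproduce (it only cites \citet[Theorem 1]{robbins1971convergence}): discount by $a_n=\prod_{k<n}(1+A_k)^{-1}$, form the compensated supermartingale $Z_n$, localize with the stopping times $\tau_N$ to obtain the uniform lower bound $-N$, apply supermartingale convergence, and then undo the discounting using $a_\infty>0$ on the event $\{\sum_n A_n<\infty\}$. The only point worth noting is the implicit integrability needed to speak of a (stopped) supermartingale when the $R_n$ are merely nonnegative; this is handled exactly as in the original reference via generalized conditional expectations, since the stopped process has negative part bounded by $N$.
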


\begin{lemma}[\citet{duchi2021asymptotic}, Lemma 4.2]
\label{lemma:duchi:4.2}
Let $x^\star \in \mathbb R^s$ satisfy $A_{a}^\star x^\star = b_a^\star$ and $A_{i}^\star x^\star < b_i^\star$, and define $g = - A_a^{\star\top} \lambda \in \mathbb R^s$ for some $\lambda >0$. Moreover, let $x_n$ be the unique minimizer of the following optimization problem
\begin{align}
\label{eq:opt:x_k:v_k:delta_k}
\begin{array}{cl}
    \min &\inner{g}{x} + \inner{v_n}{x} + \frac{\delta_n}{2} \|x - x_0\|^2 \\
    \text{s.t.} & x \in \mathbb R^s \\
    ~ & \begin{bmatrix} A_{a}^\star \\ A_{i}^\star \end{bmatrix}x \leq \begin{bmatrix} b_a^\star \\ b_i^\star \end{bmatrix},
\end{array}
\end{align}
with $(v_n,\delta_n) \in \mathbb R^s \times \mathbb R_{++}$, and $x^\star$ be a minimizer of \eqref{eq:opt:x_k:v_k:delta_k} for $v_n = 0$, $\delta_n = 0$. If $(\delta_n, v_n) \to 0$ and $x_n \to x^\star$ as $n \to \infty$, then there exists $N \in \mathbb N$ such that $A_{a}^\star x_n = b_a^\star$ for all $n \geq N$.
\end{lemma}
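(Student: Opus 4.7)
The plan is to analyze the first-order optimality conditions of the strongly convex quadratic program defining $x_n$, use the convergence $x_n \to x^\star$ to eliminate the multipliers of the strictly inactive constraints, and then exploit the strict positivity of $\lambda$ to argue that every active constraint of $x^\star$ is tight at $x_n$ for $n$ large.

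By convex optimality there exist Lagrange multipliers $\mu_{a,n}, \mu_{i,n} \geq 0$ with
\begin{equation*}
    g + v_n + \delta_n (x_n - x_0) + A_a^{\star\top}\mu_{a,n} + A_i^{\star\top}\mu_{i,n} = 0
\end{equation*}
together with componentwise complementary slackness. Since $x_n \to x^\star$ and $A_i^\star x^\star - b_i^\star < 0$, continuity furnishes $N_1$ with $A_i^\star x_n < b_i^\star$ for $n \geq N_1$; complementary slackness then forces $\mu_{i,n} = 0$. Substituting $g = -A_a^{\star\top}\lambda$ and setting $e_n := v_n + \delta_n (x_n - x_0)$, the stationarity condition collapses to $A_a^{\star\top}(\mu_{a,n} - \lambda) = -e_n$, and $e_n \to 0$ because $v_n, \delta_n \to 0$ and $\|x_n - x_0\|$ is bounded.

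The crux is to show that for $n$ large, $\mu_{a,n,j} > 0$ for every $j$, whereupon complementary slackness yields $A_{a,j}^\star x_n = b_{a,j}^\star$. I would argue by contradiction: if not, the active set $J_n := \{j : A_{a,j}^\star x_n = b_{a,j}^\star\}$ takes values in a finite collection, so we may refine to a subsequence on which $J_n = J \subsetneq \{1,\dots,p_1\}$ is constant, and $\mu_{a,n}$ is then supported on $J$. The stationarity equation becomes $A_J^\top (\mu_{a,n}|_J) = A_a^{\star\top}\lambda - e_n$; bounding the family $\{\mu_{a,n}|_J\}$ via closedness and recession-cone structure of $\mathrm{cone}(A_J^\top)$ and passing to a subsubsequential limit produces $\tilde\mu \geq 0$ supported on $J$ with $A_a^{\star\top}\tilde\mu = A_a^{\star\top}\lambda$. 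Combined with $\lambda > 0$, this contradicts the interpretation of $-g = A_a^{\star\top}\lambda$ as lying in the relative interior of the normal cone of $\Theta$ at $x^\star$, so the assumption must fail.

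The main obstacle is precisely this last contradiction step when $A_a^\star$ is rank deficient, since non-unique conic representations make the relative-interior argument delicate. Under the standard LICQ assumption at $x^\star$ (rows of $A_a^\star$ linearly independent) the argument is transparent: multipliers are unique and $\mu_{a,n} = \lambda - (A_a^\star A_a^{\star\top})^{-1} A_a^\star e_n \to \lambda > 0$ componentwise, so strict positivity holds for $n$ large. Without LICQ, one must enumerate the finitely many candidate proper subsets $J$ and rule each out using $\lambda > 0$ together with the specific combinatorial structure of the polyhedron $\Theta$; this is the technically delicate step of the proof.
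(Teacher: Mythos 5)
The paper does not prove this statement at all—it is imported verbatim as Lemma 4.2 of \citet{duchi2021asymptotic}—so your attempt can only be judged against the known argument, and it has a genuine gap exactly where you flag one. Your reduction is fine up to the point where all multipliers of the strictly inactive constraints vanish and stationarity reads $A_a^{\star\top}(\mu_{a,n}-\lambda)=-e_n$ with $e_n\to 0$, and under LICQ your conclusion $\mu_{a,n}\to\lambda>0$ is correct. But the lemma assumes no linear independence of the rows of $A_a^\star$, and your proposed contradiction in the degenerate case does not work: producing a limit multiplier $\tilde\mu\ge 0$ supported on a proper subset $J$ with $A_a^{\star\top}\tilde\mu=A_a^{\star\top}\lambda$ contradicts nothing. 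If, say, the rows satisfy $a_3=a_1+a_2$, then $a_1+a_2+a_3=2a_1+2a_2$, so the very same vector $-g$, which does lie in the relative interior of the normal cone (images of relative interiors are relative interiors), also admits a nonnegative representation supported on $J=\{1,2\}$. Hence "strict complementarity of $\lambda$'' cannot be turned into strict positivity of \emph{the} multipliers at $x_n$, because multipliers are not unique; the combinatorial enumeration you defer to is not a technicality but the missing proof, and a multiplier-based route does not obviously close it.

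The standard way to prove the lemma (and, in essence, how \citet{duchi2021asymptotic} argue) bypasses multipliers entirely. Let $y_n$ be the Euclidean projection of $x_n$ onto the face $\{x: A_a^\star x=b_a^\star,\ A_i^\star x\le b_i^\star\}$; since $x_n\to x^\star$ and the inactive constraints are slack at $x^\star$, a Hoffman-type error bound for polyhedra gives $\|y_n-x_n\|\le C\,\|A_a^\star x_n-b_a^\star\|$ for $n$ large, with $C$ depending only on $A_a^\star$. First-order optimality of $x_n$ with the feasible comparison point $y_n$ yields $\langle g+v_n+\delta_n(x_n-x_0),\,y_n-x_n\rangle\ge 0$, while $\langle g,\,y_n-x_n\rangle=-\lambda^\top(b_a^\star-A_a^\star x_n)\le-\lambda_{\min}\|b_a^\star-A_a^\star x_n\|_1$ because $A_a^\star y_n=b_a^\star$ and $A_a^\star x_n\le b_a^\star$. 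Combining the two displays and the error bound forces $\lambda_{\min}\|b_a^\star-A_a^\star x_n\|_1\le C(\|v_n\|+\delta_n\|x_n-x_0\|)\,\|b_a^\star-A_a^\star x_n\|$, which is impossible for a nonzero residual once $\|v_n\|+\delta_n\|x_n-x_0\|$ is small; hence $A_a^\star x_n=b_a^\star$ for all large $n$, with no constraint qualification needed. I recommend replacing your contradiction step with this projection-plus-Hoffman argument (or explicitly adding LICQ as a hypothesis, which would weaken the lemma relative to the cited version).
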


\begin{lemma}[\citet{dembo2016lecture}, Exercise 5.3.35]
\label{lemma:conv:martingale}
Let $M_n$ be a martingale adapted to the filtration $\mathcal F_n$, and let $\{d_n\}_{n\in \mathbb N}$ be a positive, non-random sequence satisfying $d_n \uparrow \infty$. If $\sum_{n=1}^\infty d_n^{-2} \mathbb E[\|M_{n}-M_{n-1}\|^2 | \mathcal F_{n-1}] < \infty$, then $d_n^{-1}M_n \overset{\text{a.s.}}{\to} 0$.
\end{lemma}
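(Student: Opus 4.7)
The plan is to prove the lemma by combining a martingale convergence result for the rescaled increments with Kronecker's lemma.

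First, I define the auxiliary process $N_n := \sum_{k=1}^n d_k^{-1}(M_k - M_{k-1})$. Because the $d_k$ are deterministic and strictly positive, and because $M_n$ is a martingale with respect to $\{\mathcal F_n\}$, the process $N_n$ is itself a martingale adapted to $\{\mathcal F_n\}$. Its predictable quadratic variation is
\[
  \langle N \rangle_n \;=\; \sum_{k=1}^n d_k^{-2}\, \mathbb E\!\left[\|M_k - M_{k-1}\|^2 \,\middle|\, \mathcal F_{k-1}\right],
\]
which by hypothesis satisfies $\langle N \rangle_\infty < \infty$ almost surely (in the vector case, the scalar argument is applied componentwise, or $\|\cdot\|^2$ is read as the sum of squared component increments).

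Second, I show that $N_n$ converges almost surely to a finite limit $N_\infty$. If the hypothesis were $\sum_{n} d_n^{-2}\mathbb E\|M_n-M_{n-1}\|^2 < \infty$, then $N_n$ would be $L^2$-bounded and Doob's $L^2$ convergence theorem would apply immediately. To obtain a.s.\ convergence under only the a.s.\ finiteness of $\langle N \rangle_\infty$, I localize: set $\tau_K := \inf\{n : \langle N \rangle_{n+1} > K\}$, which is a stopping time since $\langle N \rangle_{n+1}$ is $\mathcal F_n$-measurable. The stopped martingale $N_{n \wedge \tau_K}$ has $\mathbb E\|N_{n\wedge \tau_K}\|^2 \le K$, so it is $L^2$-bounded and converges a.s.\ by Doob. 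Since $\langle N \rangle_\infty < \infty$ a.s., we have $\{\tau_K = \infty\} \uparrow \Omega$ as $K \to \infty$, so on a set of full probability $N_n$ equals the stopped martingale for $K$ large enough, giving $N_n \to N_\infty$ a.s.

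Third, I invoke Kronecker's lemma: if $\sum_{k=1}^\infty x_k$ converges (in $\mathbb R^d$) and $b_n \uparrow \infty$ is a positive scalar sequence, then $b_n^{-1} \sum_{k=1}^n b_k x_k \to 0$. Taking $x_k := d_k^{-1}(M_k - M_{k-1})$ and $b_k := d_k$, the a.s.\ convergence of $N_n = \sum_{k=1}^n x_k$ established above yields, almost surely,
\[
  \frac{1}{d_n}\sum_{k=1}^n d_k\, x_k \;=\; \frac{M_n - M_0}{d_n} \;\longrightarrow\; 0.
\]
Since $d_n \uparrow \infty$ and $M_0$ is a fixed (finite a.s.) random variable, $M_0/d_n \to 0$ a.s., and hence $d_n^{-1} M_n \overset{\text{a.s.}}{\to} 0$, as desired.

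The main obstacle is the second step: upgrading from the a.s.\ finiteness of the conditional quadratic variation to a.s.\ convergence of the martingale $N_n$. The $L^2$-boundedness route is too strong; the clean solution is the localization argument sketched above, which reduces the problem to Doob's theorem on $L^2$-bounded martingales applied to each stopped process $N^{\tau_K}$. Everything else—the martingale property of $N_n$ and the application of Kronecker's lemma—is mechanical once a.s.\ convergence of $N_n$ is secured.
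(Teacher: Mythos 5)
Your proof is correct: the paper itself states this lemma without proof, citing it as an exercise in \citet{dembo2016lecture}, and your argument (pass to the rescaled martingale $N_n$, localize with the stopping times $\tau_K$ determined by the predictable quadratic variation, apply Doob's convergence theorem to the $L^2$-bounded stopped martingales, and finish with Kronecker's lemma) is exactly the standard solution of that exercise. The only points worth making explicit are that the bound $\mathbb E\|N_{n\wedge\tau_K}\|^2\le K$ rests on the orthogonality of the stopped increments, each of which is square-integrable because $\{k\le\tau_K\}\in\mathcal F_{k-1}$ and $\langle N\rangle_{n\wedge\tau_K}\le K$ pointwise, and that $\bigcup_K\{\tau_K=\infty\}$ has probability one (rather than equalling $\Omega$); neither affects the validity of the argument.
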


\begin{lemma}[\citet{hall2014martingale}, Corollary~3.1]
\label{lemma:clt:martingale}
Let $\{S_{n,i}, \mathcal F_{n,i}, 1\leq i \leq k_n, n \geq 1\}$ be a zero-mean, square-integrable martingale array with differences $Y_{n,i} = S_{n,i} - S_{n,i-1}$, and let $\eta^2$ be an a.s. finite random variable. It is assumed that $k_n \uparrow \infty$ as $n \to \infty$. Suppose that:
\begin{itemize}
    \item[(i)] for all $\varepsilon >0$, $\sum_{i=1}^{k_n} \mathbb E \left[ Y_{n,i}^2 \, \mathbf{1}_{\{|Y_{n,i}| \, > \, \varepsilon\}} | \mathcal F_{n,i-1}\right] \overset{P}{\to} 0$ as $n \to \infty$,

    \item[(ii)] $\sum_{i=1}^{k_n} \mathbb E \left[ Y_{n,i}^2 | \mathcal F_{n,i-1}\right] \overset{P}{\to} \eta^2$ as $n \to \infty$,

    \item[(ii)] the $\sigma$-fields are nested: $\mathcal F_{n,i} \subseteq \mathcal F_{n,i+1}$ for $1 \leq i \leq k_n$, $n \geq 1$.
\end{itemize}
Then, $S_{n,k_n} = \sum_{i=1}^{k_n} Y_{n,i} \overset{d}{\to} Z$ (stably), where the random variable $Z$ has characteristic function $\mathbb E \left[ \text{exp}(-\frac{1}{2}\eta^2 t^2) \right]$.
\end{lemma}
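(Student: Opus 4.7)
The plan is to establish stable convergence by proving the characteristic-function identity $\mathbb E[g\,e^{itS_{n,k_n}}]\to \mathbb E[g\,e^{-t^2\eta^2/2}]$ for every bounded random variable $g$ measurable with respect to $\mathcal F_\infty := \sigma(\bigcup_{n,i}\mathcal F_{n,i})$, and every $t\in\mathbb R$. This identifies the stable limit $Z$ as a mixture of centered Gaussians with random variance $\eta^2$, matching the claimed characteristic function $\mathbb E[\exp(-\tfrac12 t^2\eta^2)]$.

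First, I would truncate: set $\tilde Y_{n,i}:=Y_{n,i}\mathbf 1_{\{|Y_{n,i}|\le \delta_n\}}-\mathbb E[Y_{n,i}\mathbf 1_{\{|Y_{n,i}|\le \delta_n\}}\mid\mathcal F_{n,i-1}]$ for a sequence $\delta_n\downarrow 0$ chosen slowly enough. Recentering preserves the martingale-difference property relative to $\{\mathcal F_{n,i}\}$. The Lindeberg condition (i) guarantees that the replacement error $S_{n,k_n}-\tilde S_{n,k_n}$ vanishes in probability and that the conditional-variance convergence (ii) transfers to the truncated array, i.e. $\tilde V_{n,k_n} := \sum_{i}\mathbb E[\tilde Y_{n,i}^2\mid\mathcal F_{n,i-1}]\overset{P}{\to}\eta^2$.

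The heart of the argument is the auxiliary process $M_{n,i}:=\exp(it\tilde S_{n,i}+\tfrac12 t^2 \tilde V_{n,i})$, and I would show $M_{n,k_n}\to 1$ in $L^1$ after localizing to the event $\{\tilde V_{n,k_n}\le K\}$ for some large deterministic $K$. A telescoping decomposition gives $M_{n,k_n}-1=\sum_i M_{n,i-1}\bigl(e^{it\tilde Y_{n,i}+\tfrac12 t^2\mathbb E[\tilde Y_{n,i}^2\mid\mathcal F_{n,i-1}]}-1\bigr)$. Expanding the inner exponential to second order, and exploiting the nested-$\sigma$-field condition (iii) together with $\mathbb E[\tilde Y_{n,i}\mid\mathcal F_{n,i-1}]=0$, the first- and second-order terms cancel under conditional expectation; the residual is bounded in modulus by $C t^3 \delta_n \sum_i\mathbb E[\tilde Y_{n,i}^2\mid\mathcal F_{n,i-1}]+o_P(1)$, which tends to zero by the boundedness of $\tilde V_{n,k_n}$ and $\delta_n\downarrow 0$. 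On the localization event the prefactor $|M_{n,i-1}|$ is bounded by $e^{t^2 K/2}$, so the remainder converges to zero in $L^1$.

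Finally, combining the $L^1$ convergence $M_{n,k_n}\to 1$ with the probability convergence $\tilde V_{n,k_n}\to\eta^2$, bounded convergence yields $\mathbb E[g\,e^{it\tilde S_{n,k_n}}\mathbf 1_{\{\tilde V_{n,k_n}\le K\}}]\to \mathbb E[g\,e^{-t^2\eta^2/2}\mathbf 1_{\{\eta^2\le K\}}]$. Letting $K\to\infty$ removes the localization (since $\eta^2$ is a.s.\ finite), and the first-step truncation error then lets one pass back to $S_{n,k_n}$. The main obstacle is the $L^1$ (not merely in-probability) control of the telescoping sum: unbounded exponentials must be tamed by truncating $\tilde V_{n,k_n}$ at a deterministic level, and one must verify that the Taylor-expansion cancellations produce a remainder summable against the conditional variances uniformly in $n$. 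Extending characteristic-function convergence against all bounded $\mathcal F_\infty$-measurable $g$ (rather than only $g\equiv 1$) is precisely what upgrades plain convergence in distribution to stable convergence, and follows from the same estimates applied to $g$ as a fixed weight.
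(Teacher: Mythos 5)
The paper does not actually prove this lemma: it is imported verbatim as Corollary~3.1 of Hall and Heyde, so there is no internal proof to compare against. Your proposal reconstructs the classical textbook argument---truncation via the conditional Lindeberg condition, recentering, the exponential process $\exp(it\tilde S_{n,i}+\tfrac12 t^2\tilde V_{n,i})$, a telescoping identity with a third-order Taylor remainder of size $O(\delta_n\tilde V_{n,k_n})$, localization of the conditional variance, and a final decoupling step---which is essentially the route Hall and Heyde themselves take (they work with the product $\prod_i(1+itY_{n,i})$ rather than the exponential, but the two differ only in bookkeeping of the same Taylor estimates).

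Two points need repair. First, localizing ``to the event $\{\tilde V_{n,k_n}\le K\}$'' breaks the conditional-expectation cancellations you rely on, because that event is not $\mathcal F_{n,i-1}$-measurable for $i<k_n$; since the conditional-variance process $i\mapsto\tilde V_{n,i}$ is predictable, the correct device is the stopping time $\tau_n=\min\{i:\tilde V_{n,i+1}>K\}$ and stopping all sums there (then $\mathbb P(\tau_n<k_n)$ is controlled by $\mathbb P(\eta^2>K)$ plus an $o(1)$ term). This is standard but must be said. Second, and more substantively, you misattribute the role of hypothesis (iii): the first- and second-order cancellations in the telescoping step use only the within-row martingale-difference property and require no nesting at all. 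The nesting condition (which, incidentally, is garbled in the paper's transcription---it should read $\mathcal F_{n,i}\subseteq\mathcal F_{n+1,i}$, i.e.\ nesting across rows, the within-row inclusion being automatic) is consumed precisely in the \emph{stable} part of the conclusion: to insert a bounded $\mathcal F_\infty$-measurable weight $g$ and still exploit martingale orthogonality, you must approximate $g$ by variables measurable with respect to $\sigma$-fields attached to earlier rows, and it is exactly the across-row nesting that licenses this approximation. Your closing sentence, that the weighted convergence ``follows from the same estimates applied to $g$ as a fixed weight,'' hides this step: without it your argument delivers only plain convergence in distribution (which happens to be all the paper needs downstream, since in its application $\eta^2=t^\top\Sigma^\star t$ is deterministic), not stable convergence as stated.
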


\subsection{A Generic Asymptotic Normality Result}
\label{subsec:generic:asym:nor}

To keep the paper self-contained, in what follows we recall the generic asymptotic normality result from Section~13 (pages~12-13) in the Supplementary material of \cite{duchi2021asymptotic}. There, Duchi and Ruan generalize Polyak and Juditsky’s results \cite{polyak1992acceleration} on asymptotic normality in averaged stochastic gradient methods restricted to an arbitrary subspace of $\mathbb R^s$. Before stating the result, we need to introduce some notation, as well as two assumptions.

Given $\mathcal T := \{x \in \mathbb R^s:\; A x = 0\}$ a subspace of $\mathbb R^s$, we denote by $\text{P} \in \mathbb R^{s \times s}$ the orthogonal projector onto $\mathcal T$. Moreover, let $\xi_n$ be a martingale difference process adapted to a filtration $\mathcal F_n$, and let $\{x_n\}_{n \in \mathbb N}$, $\{\zeta_n\}_{n \in \mathbb N}$, $\{\epsilon_n\}_{n \in \mathbb N}$, and $\{\Delta_n := x_n -x^\star\}_{n \in \mathbb N}$ be sequences of vectors in $\mathbb R^s$ adapted to the same filtration $\mathcal F_n$. Here, $x^\star$ denotes some vector in $\mathbb R^s$. Assume that for a matrix $H \in \mathbb R^{s \times s}$ we have the recursion
\begin{align}
\label{eq:generic:iteration:CLT}
    \Delta_{n+1} = \Delta_n - \alpha_{n+1} \text{P} H \text{P} \Delta_n - \alpha_{n+1} \text{P} (\xi_n + \zeta_n) + \epsilon_n,
\end{align}
where $\Delta_0 \in \mathcal T$ and $\epsilon_n \in \mathcal T$, for all $n \in \mathbb N$. The asymptotic normality result requires the following two assumptions.

\begin{assumption}[Generic Asymptotic Normality I]~
\label{assump:generic:CLT:1}
\begin{enumerate}[label=(\roman*)]
    \item \label{assump:generic:CLT:1:1} There exists $c>0$ such that for all $w \in \mathcal T$ we have $w^\top H w \geq c \|w\|^2$.

    \item \label{assump:generic:CLT:1:2} There exists $C<\infty$ such that $\mathbb E\left[ \|\xi_n\|^2 | \mathcal F_{n-1} \right] \leq C$. Moreover, for some $\Sigma \geq 0$,
    \begin{align*}
        \frac{1}{\sqrt{n}} \sum_{k=0}^{n-1} \xi_k \overset{d}{\to} \mathcal N(0,\Sigma).
    \end{align*}
\end{enumerate}
\end{assumption}

We would like to highlight that Assumption~\ref{assump:generic:CLT:1}\ref{assump:generic:CLT:1:2} is slightly weaker compared to Assumption~S.A in Section~13 in the Supplementary material of \cite{duchi2021asymptotic}. Under Assumption~\ref{assump:IS}\ref{ass:IS:dominated}, this weaker version will be enough for us to prove the CLT in Theorem~\ref{thm:asymp:norm}.

\begin{assumption}[Generic Asymptotic Normality II]~
\label{assump:generic:CLT:2}
\begin{enumerate}[label=(\roman*)]
    \item \label{assump:generic:CLT:2:1} The sequence $\{\zeta_n\}_{n \in \mathbb N}$ satisfies 
    \begin{align*}
        \frac{1}{\sqrt{n}} \sum_{k=0}^{n-1} \| \text{P} \zeta_k\| \overset{\text{a.s.}}{\to} 0.
    \end{align*}
    
    \item \label{assump:generic:CLT:2:2} There exists a random variable $N < \infty$ such that $\epsilon_n = 0$ for $n \geq N$.

    \item \label{assump:generic:CLT:2:3} The iterates $x_n$ satisfy $x_n \overset{\text{a.s.}}{\to} x^\star$ and
    \begin{align*}
        \frac{1}{\sqrt{n}} \sum_{k=0}^{n-1} \| x_k - x^\star \|^2 \overset{\text{a.s.}}{\to} 0.
    \end{align*}
\end{enumerate}
\end{assumption}

We are now ready to state the generic asymptotic normality result.

\begin{lemma}[\citet{duchi2021asymptotic}, Proposition S.1]
\label{lemma:duchi:CLT}
Let Assumptions~\ref{assump:generic:CLT:1} and \ref{assump:generic:CLT:2} hold for the recursion \eqref{eq:generic:iteration:CLT} with $\Delta_n := x_n -x^\star$. Then,
\begin{align}
\label{eq:lemma:duchi:CLT}
    \frac{1}{\sqrt{n}} \sum_{k=0}^{n-1} \Delta_k \overset{d}{\to} \mathcal N(0, (\text{P} H \text{P})^\dagger \Sigma (\text{P} H \text{P})^\dagger).
\end{align}
\end{lemma}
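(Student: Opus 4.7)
The plan is to adapt the classical Polyak–Juditsky argument \citep{polyak1992acceleration} to the subspace $\mathcal T$ onto which $\text{P}$ projects. First, a short induction on the recursion~\eqref{eq:generic:iteration:CLT} shows that $\Delta_n \in \mathcal T$ for every $n$: we have $\Delta_0 \in \mathcal T$, the stochastic and bias increments are left-multiplied by $\text{P}$, $\tilde H := \text{P}H\text{P}$ stabilizes $\mathcal T$, and $\epsilon_n \in \mathcal T$ by hypothesis. Assumption~\ref{assump:generic:CLT:1}\ref{assump:generic:CLT:1:1} then says that $\tilde H$ is invertible as a map from $\mathcal T$ to $\mathcal T$, so its Moore–Penrose pseudoinverse $\tilde H^\dagger$ satisfies $\tilde H^\dagger \tilde H\,w = w$ for every $w\in \mathcal T$ together with $\tilde H^\dagger \text{P} = \text{P}\tilde H^\dagger = \tilde H^\dagger$. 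Solving the recursion pointwise for $\tilde H \Delta_k$ and applying $\tilde H^\dagger$ on both sides yields the key identity
\[
  \Delta_k = \tilde H^\dagger \left[\frac{\Delta_k - \Delta_{k+1}}{\alpha_{k+1}} - \text{P}\xi_k - \text{P}\zeta_k + \frac{\epsilon_k}{\alpha_{k+1}}\right].
\]

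Summing this identity for $k=0,\ldots,n-1$ and dividing by $\sqrt n$ expresses $\sqrt n\, \bar\Delta_n$ as a sum of four contributions, which I would dispatch one by one. The martingale contribution $-\tilde H^\dagger\, n^{-1/2}\sum_{k<n}\text{P}\xi_k$ converges in distribution to $\mathcal N(0, \tilde H^\dagger\, \text{P}\Sigma\text{P}\, \tilde H^\dagger)$ by Assumption~\ref{assump:generic:CLT:1}\ref{assump:generic:CLT:1:2} and the continuous mapping theorem; absorbing the projectors into the pseudoinverse via $\tilde H^\dagger \text{P} = \tilde H^\dagger$ reduces the limiting covariance to exactly the target $(\text{P}H\text{P})^\dagger \Sigma (\text{P}H\text{P})^\dagger$ in~\eqref{eq:lemma:duchi:CLT}. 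The bias contribution $-\tilde H^\dagger\, n^{-1/2}\sum_{k<n}\text{P}\zeta_k$ vanishes almost surely by Assumption~\ref{assump:generic:CLT:2}\ref{assump:generic:CLT:2:1}, and the constraint-error contribution $\tilde H^\dagger\, n^{-1/2}\sum_{k<n}\epsilon_k/\alpha_{k+1}$ vanishes because Assumption~\ref{assump:generic:CLT:2}\ref{assump:generic:CLT:2:2} makes $\epsilon_k = 0$ beyond a random but almost surely finite index $N$, so the scaled sum is a fixed random vector multiplied by $n^{-1/2}$.

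The hard part will be the telescoping contribution $\tilde H^\dagger\, n^{-1/2}\sum_{k<n}(\Delta_k - \Delta_{k+1})/\alpha_{k+1}$. Abel summation rewrites it as
\[
  \tilde H^\dagger \left\{\frac{\Delta_0}{\alpha_1 \sqrt n} - \frac{\Delta_n}{\alpha_n \sqrt n} + \frac{1}{\sqrt n}\sum_{k=1}^{n-1}\!\left(\frac{1}{\alpha_{k+1}} - \frac{1}{\alpha_k}\right)\!\Delta_k\right\},
\]
and each of the three pieces must be shown to be $o_P(1)$. With $\alpha_n = \alpha/n^\gamma$ and $\gamma\in(1/2,1)$ the first piece is deterministically negligible, while the boundary term scales like $n^{\gamma-1/2}\|\Delta_n\|$ and the weighted sum has coefficients $|1/\alpha_{k+1} - 1/\alpha_k| = O(k^{\gamma-1})$. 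Both pieces therefore require a quantitative pointwise decay of $\|\Delta_n\|$ that is strictly stronger than the Ces\`aro $L^2$ decay provided by Assumption~\ref{assump:generic:CLT:2}\ref{assump:generic:CLT:2:3}. The standard resolution, which I would import from the Polyak–Juditsky toolbox, is to unroll the recursion as $\Delta_n = \Phi_{n,0}\Delta_0 - \sum_{j<n}\Phi_{n,j+1}\alpha_{j+1}\text{P}(\xi_j+\zeta_j)$ (valid for $n\ge N$) with transition operators $\Phi_{n,j} := \prod_{i=j+1}^n(\text{I} - \alpha_i \tilde H)$ whose norms contract like $\exp(-c\sum_{i>j}\alpha_i)$ by Assumption~\ref{assump:generic:CLT:1}\ref{assump:generic:CLT:1:1}, and then to invoke a martingale maximal inequality together with the uniform second-moment bound of Assumption~\ref{assump:generic:CLT:1}\ref{assump:generic:CLT:1:2} to obtain the required rate $\|\Delta_n\| = o_P(n^{1/2-\gamma})$. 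The weighted-sum piece is then controlled by Cauchy–Schwarz using this rate jointly with Assumption~\ref{assump:generic:CLT:2}\ref{assump:generic:CLT:2:3}. Combining the four contributions via Slutsky's theorem delivers the CLT~\eqref{eq:lemma:duchi:CLT}.
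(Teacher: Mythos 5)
The paper itself does not prove this lemma: it is imported verbatim from the supplementary material of \citet{duchi2021asymptotic} (Proposition S.1), with only a remark that $(\text{P}H\text{P})^\dagger\Sigma(\text{P}H\text{P})^\dagger$ equals $(\text{P}H\text{P})^\dagger\text{P}\Sigma\text{P}(\text{P}H\text{P})^\dagger$. So your attempt has to be judged against the Polyak--Juditsky/Duchi--Ruan argument it tries to reproduce. Your skeleton is the right one: the induction showing $\Delta_n\in\mathcal T$, the invertibility of $\tilde H=\text{P}H\text{P}$ on $\mathcal T$ with $\tilde H^\dagger\tilde H w=w$ for $w\in\mathcal T$ and $\tilde H^\dagger\text{P}=\text{P}\tilde H^\dagger=\tilde H^\dagger$, the identity $\Delta_k=\tilde H^\dagger[(\Delta_k-\Delta_{k+1})/\alpha_{k+1}-\text{P}\xi_k-\text{P}\zeta_k+\epsilon_k/\alpha_{k+1}]$, and the disposal of the $\xi$, $\zeta$ and $\epsilon$ contributions are all correct, and the covariance bookkeeping matches the paper's remark.

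The genuine gap is in the telescoping term, which you rightly call the hard part but do not actually close. First, the rate you propose to establish, $\|\Delta_n\|=o_P(n^{1/2-\gamma})$ for each $n$, is a pointwise-in-$n$ in-probability statement; it controls the boundary term $\Delta_n/(\alpha_n\sqrt n)$ but not the weighted sum $n^{-1/2}\sum_{k}(1/\alpha_{k+1}-1/\alpha_k)\|\Delta_k\|$, whose coefficients are of order $k^{\gamma-1}$: weighted averages of a sequence that merely tends to $0$ in probability need not vanish. Second, the fallback you invoke---Cauchy--Schwarz against Assumption~\ref{assump:generic:CLT:2}\ref{assump:generic:CLT:2:3}---provably does not cover the whole range $\gamma\in(1/2,1)$: any split $k^{\gamma-1}\|\Delta_k\|=(k^{\gamma-1+\beta})(k^{-\beta}\|\Delta_k\|)$ combined with the Ces\`aro bound $\sum_{k\le n}\|\Delta_k\|^2=o(\sqrt n)$ forces $\beta\in(1/4,\,1-\gamma)$, which is empty once $\gamma\ge 3/4$ (e.g.\ $\|\Delta_k\|\asymp k^{-1/4}/\sqrt{\log k}$ satisfies (iii) yet makes the weighted sum diverge for $\gamma>3/4$). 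What is actually needed is trajectory-level control, e.g.\ $\mathbb E[\|\Delta_k\|^2\mathbf 1_{\mathcal E_k}]=O(\alpha_k)$ uniformly in $k$ on high-probability events, and obtaining it from your unrolled recursion is obstructed precisely where you wave at "standard tools": the weights $\Phi_{n,j}$ depend on $n$, so the partial sums are not a martingale in $n$ and no off-the-shelf maximal inequality applies; the random horizon $N$ (before which $\epsilon_k\neq0$ with no moment control) and the $\zeta_k$, controlled only in an almost-sure Ces\`aro sense, block clean $L^2$ recursions. The Polyak--Juditsky/Duchi--Ruan proof sidesteps this entirely with a different decomposition: unroll the recursion, exchange the order of summation, and show that the averaged transition matrices $\alpha_{j+1}\sum_{k>j}\Phi_{k,j+1}$ converge (in a Ces\`aro sense) to $(\text{P}H\text{P})^\dagger$, so that no pointwise rate on $\|\Delta_n\|$ is ever required. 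Either adopt that route or supply the uniform-in-$k$ moment bound honestly; as written, the argument does not go through for $\gamma\ge 3/4$.
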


We highlight the difference between the asymptotic variance in \eqref{eq:lemma:duchi:CLT} and the asymptotic variance in \cite[Proposition S.1 in Supplementary material]{duchi2021asymptotic}, which is $(\text{P} H \text{P})^\dagger \text{P} \Sigma \text{P} (\text{P} H \text{P})^\dagger$. The two expressions are equal. This follows from the fact that $\text{P}$ is an orthogonal projection matrix, which guarantees that $\text{P} (\text{P} H \text{P})^\dagger = (\text{P} H \text{P})^\dagger = (\text{P} H \text{P})^\dagger \text{P}$.



\section{Supporting Lemmas for Theorem~\ref{thm:as:conv}}
\label{appendix:proof:as:conv}

\begin{lemma}
\label{lemma:as:conv:theta}
Consider the iteration
\begin{align}
\label{eq:NDA:theta}
    \theta_{n+1} &= \argmin_{\theta \in \Theta} \left\{ \inner{\sum_{k=0}^n \alpha_{k+1} G_k}{\theta} +\frac{1}{2} \| \theta\|^2\right\}
\end{align}
with $G_k\defn G_{\mu_k}(\theta_k, X_{k+1}^{(\mu_k)}) \defn G(\theta_k, X_{k+1}^{(\mu_k)})\ell(X_{k+1}^{(\mu_k)},\mu_k)$, for some arbitrary importance sampling parameter sequence $\{\mu_k\}_{k\in\mathbb N} \subset \mathcal M$. Moreover, let Assumptions~\ref{assump:as:conv}\ref{assump:as:conv:1}, \ref{assump:as:conv:2}, \ref{assump:as:conv:4} and Assumption~\ref{assump:IS}\ref{ass:IS:dominated} be satisfied, and let $b_{n+1} = \sum_{k=0}^n \alpha_{k+1}$. Then, 
\begin{enumerate}[label=(\roman*)]
    \item \label{lemma:as:conv:theta:1} $\sum_{n=0}^\infty \alpha_{n+1}\|\theta_n - \theta^\star\|^2 < \infty$;
    
    \item \label{lemma:as:conv:theta:2} $b_{n+1}^{-1/2}\sum_{k=0}^n \alpha_{k+1}(G_k - \nabla f(\theta_k)) \overset{\text{a.s.}}{\to} 0$;

    \item \label{lemma:as:conv:theta:3} $\left\|\sum_{k=0}^n \alpha_{k+1} (\nabla f(\theta_k) - \nabla f(\theta^\star))  \right\|^2 \leq C b_{n+1}$, with probability one, for some (random) finite $C$;

    \item \label{lemma:as:conv:theta:4} $\theta_n \overset{\text{a.s.}}{\to} \theta^\star$;

    \item \label{lemma:as:conv:theta:5} There exists some random $N < \infty$ such that $A_{a}^{\theta_n} = A_{a}^\star$ and $A_{i}^{\theta_n} = A_{i}^\star$, for all $n \geq N$.
\end{enumerate}
\end{lemma}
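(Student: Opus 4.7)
The plan is to treat \eqref{eq:NDA:theta} as a single-block NDA iteration driven by an unbiased gradient estimator, with the IS sequence $\{\mu_k\}$ playing the role of an arbitrary exogenous adapted process. The two stochastic facts I rely on are $\mathbb E[G_k\mid\mathcal F_k]=\nabla f(\theta_k)$ (valid for every choice of $\mu_k$, since $\ell(\cdot,\mu_k)=\mathrm d\mathbb P/\mathrm d\mathbb P_{\mu_k}$) and the uniform second-moment bound $\mathbb E[\|G_k\|^2\mid\mathcal F_k]\leq G_M^2$ from Assumption~\ref{assump:IS}\ref{ass:IS:dominated}. The five claims are obtained in the order (i)$\to$(ii)$\to$(iii)$\to$(iv)$\to$(v): (i)--(iv) via the Robbins--Siegmund template used in the proof of Theorem~\ref{thm:as:conv} restricted to the $\theta$-block, and (v) via a rescaled application of Lemma~\ref{lemma:duchi:4.2}.

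For (i) I would set
\[
R_{n+1}\defn\inner{\textstyle\sum_{k=0}^n\alpha_{k+1}G_k+\theta_{n+1}}{\theta^\star-\theta_{n+1}}+\tfrac12\|\theta_{n+1}-\theta^\star\|^2,
\]
which is nonnegative by the first-order optimality condition for \eqref{eq:NDA:theta} tested at $\theta^\star$ and satisfies $R_{n+1}\geq\tfrac12\|\theta_{n+1}-\theta^\star\|^2$. Running the same conjugate/Moreau-envelope bookkeeping as in the proof of Theorem~\ref{thm:as:conv} and taking $\mathbb E[\cdot\mid\mathcal F_n]$ yields
\[
\mathbb E[R_{n+1}\mid\mathcal F_n]\leq R_n-\alpha_{n+1}\inner{\nabla f(\theta_n)}{\theta_n-\theta^\star}+\tfrac12 G_M^2\alpha_{n+1}^2,
\]
in which the cross term is nonnegative by convexity of $f$. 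Lemma~\ref{lemma:robbins:siegmund} then furnishes $R_n\overset{\text{a.s.}}{\to}R_\infty<\infty$ and $\sum_n\alpha_{n+1}\inner{\nabla f(\theta_n)}{\theta_n-\theta^\star}<\infty$; combined with the quadratic growth in Assumption~\ref{assump:as:conv}\ref{assump:as:conv:1} this gives (i).

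Assertions (ii) and (iii) are then quick. (ii) is Lemma~\ref{lemma:conv:martingale} applied to $M_n\defn\sum_{k=0}^{n-1}\alpha_{k+1}(G_k-\nabla f(\theta_k))$ with $d_n\defn\sqrt{b_n}$: the conditional increment variance is $O(\alpha_n^2)$, and with $\alpha_n=\alpha/n^\gamma$, $\gamma\in(1/2,1)$, one has $b_n\asymp n^{1-\gamma}$, so $\sum_n\alpha_n^2/b_n\asymp\sum_n n^{-(1+\gamma)}<\infty$. (iii) is one line of Cauchy--Schwarz: using Assumption~\ref{assump:as:conv}\ref{assump:as:conv:2},
\[
\Bigl\|\textstyle\sum_{k=0}^n\alpha_{k+1}(\nabla f(\theta_k)-\nabla f(\theta^\star))\Bigr\|^2\leq b_{n+1}\,c_2^2\sum_{k=0}^n\alpha_{k+1}\|\theta_k-\theta^\star\|^2,
\]
and the rightmost sum is a.s.\ finite by (i). For (iv), the divergence $\sum_n\alpha_{n+1}/b_{n+1}=\infty$ together with (i) forces a subsequence $\{n_i\}$ on which $b_{n_i+1}\|\theta_{n_i}-\theta^\star\|^2\to 0$; decomposing $R_{n_i+1}$ exactly as in the proof of Theorem~\ref{thm:as:conv} and using (ii)--(iii) together with the first-order optimality of $\theta^\star$, every term vanishes along this subsequence, so $R_{n_i}\to 0$, hence $R_\infty=0$, and the lower bound $R_n\geq\tfrac12\|\theta_n-\theta^\star\|^2$ yields $\theta_n\overset{\text{a.s.}}{\to}\theta^\star$.

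For (v), rescaling \eqref{eq:NDA:theta} by $1/b_{n+1}$ casts it in the template of Lemma~\ref{lemma:duchi:4.2},
\[
\theta_{n+1}=\argmin_{\theta\in\Theta}\Bigl\{\inner{g}{\theta}+\inner{v_n}{\theta}+\tfrac{1}{2b_{n+1}}\|\theta\|^2\Bigr\},
\]
with $g\defn\nabla f(\theta^\star)=-A_a^{\star\top}\lambda$ and $\lambda\in\mathbb R_{++}^{p_1}$ supplied by Assumption~\ref{assump:as:conv}\ref{assump:as:conv:4}, and $v_n\defn b_{n+1}^{-1}\sum_{k=0}^n\alpha_{k+1}G_k-\nabla f(\theta^\star)$. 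Splitting $v_n$ into $b_{n+1}^{-1}\sum\alpha_{k+1}(G_k-\nabla f(\theta_k))$ plus $b_{n+1}^{-1}\sum\alpha_{k+1}(\nabla f(\theta_k)-\nabla f(\theta^\star))$, (ii) controls the first piece as $o(b_{n+1}^{-1/2})$ and (iii) the second as $O(b_{n+1}^{-1/2})$, so $v_n\to 0$ almost surely; combined with $1/b_{n+1}\to 0$ and $\theta_n\to\theta^\star$ from (iv), Lemma~\ref{lemma:duchi:4.2} delivers $A_a^\star\theta_n=b_a^\star$ for all large $n$. The accompanying $A_i^\star\theta_n<b_i^\star$ for large $n$ is automatic from (iv) and the strict inequality $A_i^\star\theta^\star<b_i^\star$ by continuity. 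The main obstacle I anticipate is the pinning $R_\infty=0$ step in (iv): unlike in Theorem~\ref{thm:as:conv}, there is no $\mu$-block available to absorb stray terms, so the nonnegativity of the cross term must come directly from convexity of $f$ alone, and one has to verify that the subsequence argument drives every piece of $R_{n_i+1}$ to zero without appealing to any cross-cancellation with the $\mu$-block.
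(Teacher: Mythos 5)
Your proposal reconstructs the paper's proof essentially verbatim: the same $R_{n+1}$ and Robbins--Siegmund template for (i), the same martingale convergence argument via Lemma~\ref{lemma:conv:martingale} for (ii), the same Jensen/Cauchy--Schwarz bound plus (i) for (iii), the same subsequence argument driving $R_\infty=0$ for (iv), and the same rescaling into the form of Lemma~\ref{lemma:duchi:4.2} with the $v_n$ split via (ii)--(iii) for (v). The ``obstacle'' you flag at the end is, as you suspect, not one---the cross term $\inner{\nabla f(\theta_n)}{\theta_n-\theta^\star}$ is nonnegative by convexity alone, exactly as in the paper, so no cross-cancellation with a $\mu$-block is needed.
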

\begin{proof}
The proof follows along similar lines as \citet[Theorems~2 and 3]{duchi2021asymptotic}. For completeness, and since these results are fundamental for proving Theorem~\ref{thm:as:conv}, we provide a full proof for Lemma~\ref{lemma:as:conv:theta}. 

\medskip

\emph{Assertion \ref{lemma:as:conv:theta:1}.} We start by defining $R_{n+1}$ as
\begin{align*}
    R_{n+1} := \inner{\sum_{k=0}^n \alpha_{k+1} G_k + \theta_{n+1}}{\theta^\star-\theta_{n+1}} + \frac{1}{2}\|\theta_{n+1}-\theta^\star\|^2.
\end{align*}
Since $\theta_{n+1}$ is the optimal solution in the minimization problem \eqref{eq:NDA:theta}, the first-order optimality condition guarantees that 
\begin{align*}
    \inner{\sum_{k=0}^n \alpha_{k+1} G_k + \theta_{n+1}}{\theta-\theta_{n+1}} \geq 0,
\end{align*}
for all $\theta\in \Theta$. In particular, this holds true for $\theta^\star \in \Theta$, showing that $R_{n+1} \geq 0$. Standard algebraic manipulations show that $R_{n+1}$ can be rewritten as
\begin{align*}
    R_{n+1} &= \inner{\sum_{k=0}^n \alpha_{k+1} G_k}{\theta^\star} + \frac{1}{2}\|\theta^\star\|^2 + \inner{-\sum_{k=0}^n \alpha_{k+1} G_k}{\theta_{n+1}} - \frac{1}{2}\|\theta_{n+1}\|^2\\
    &= \inner{\sum_{k=0}^n \alpha_{k+1} G_k}{\theta^\star} + \frac{1}{2}\|\theta^\star\|^2 + \max_{\theta \in \Theta} \left\{ -\inner{\sum_{k=0}^n \alpha_{k+1} G_k}{\theta} - \frac{1}{2}\|\theta\|^2\right\}\\
    &= \inner{\sum_{k=0}^n \alpha_{k+1} G_k}{\theta^\star} + \frac{1}{2}\|\theta^\star\|^2 + \max_{\theta \in \mathbb R^s} \left\{ \inner{-\sum_{k=0}^n \alpha_{k+1} G_k}{\theta} - \frac{1}{2}\|\theta\|^2 - \delta_{\Theta}(\theta)\right\},
\end{align*}
with $\delta_\Theta$ the indicator function of the set $\Theta$. Defining $\ell(\theta):= \frac{1}{2}\|\theta\|^2 + \delta_{\Theta}(\theta)$, we have that the above maximum is precisely the convex conjugate $\ell^*(-\sum_{k=0}^n \alpha_{k+1} G_k)$. Now, since $\min_{\theta \in \mathbb R^s} \{ \inner{\sum_{k=0}^n \alpha_{k+1} G_k}{\theta} + \frac{1}{2}\|\theta\|^2 + \delta_{\Theta}(\theta)\}$ is the Moreau envelope of $\sum_{k=0}^n \alpha_{k+1} G_k + \delta_{\Theta}(\theta)$ evaluated at zero, we have that the gradient $\nabla \ell^*(-\sum_{k=0}^n \alpha_{k+1} G_k)$ is equal to proximal map evaluated at zero \cite[Theorem~2.26]{rockafellar2009variational}, i.e.,
\begin{align*}
    \nabla \ell^*\left(-\sum_{k=0}^n \alpha_{k+1} G_k\right) = \argmax_{\theta \in \mathbb R^s} \left\{ \inner{\sum_{k=0}^n \alpha_{k+1} G_k}{\theta} + \frac{1}{2}\|\theta\|^2 + \delta_{\Theta}(\theta) \right\}=\theta_{n+1}.
\end{align*}
Moreover, since $\ell$ is $1$-strongly convex, we have that $\ell^*$ is $1$-smooth. Therefore, we can upper-bound $\ell^*(-\sum_{k=0}^n \alpha_{k+1} G_k)$ as
\begin{align*}
    \ell^*\left( -\sum_{k=0}^n \alpha_{k+1} G_k \right) \leq \ell^*\left( -\sum_{k=0}^{n-1} \alpha_{k+1} G_k \right) - \alpha_{n+1} \inner{G_n}{\theta_n} + \frac{1}{2} \|\alpha_{n+1} G_n\|^2.
\end{align*}
Using this, we can upper-bound $R_{n+1}$ as
\begin{align*}
    R_{n+1} &\leq \inner{\sum_{k=0}^n \alpha_{k+1} G_k}{\theta^\star} + \frac{1}{2}\|\theta^\star\|^2 + \ell^*\left( -\sum_{k=0}^{n-1} \alpha_{k+1} G_k \right) - \alpha_{n+1} \inner{G_n}{\theta_n} + \frac{1}{2} \|\alpha_{n+1} G_n\|^2 \\
    &= R_n - \alpha_{n+1}\inner{G_n}{\theta_n - \theta^\star} + \frac{\alpha_{n+1}^2}{2}\|G_n\|^2.
\end{align*}
Since $R_n$ is adapted to the filtration $\mathcal F_n = \sigma(X_k^{(\mu_{k-1})} |\, k \leq n)$, we can take the conditional expectation $\mathbb E[\cdot | \mathcal F_n]$ on both sides and obtain
\begin{align}
\label{eq:R_n:RobbinsSiegmund}
    \mathbb E[R_{n+1} | \mathcal F_n] \leq R_n - \alpha_{n+1}\inner{\nabla f(\theta_n)}{\theta_n - \theta^\star} + \frac{\alpha_{n+1}^2}{2} \mathbb E[\|G_n\|^2 | \mathcal F_n],
\end{align}
where we have used the fact that
\begin{align*}
    \mathbb E[G_n | \mathcal F_n] = \mathbb E_{X \sim \mathbb P_{\mu_n}}\left[ G_{\mu_n}(\theta_n, X)  \ell(X,\mu_n) \right] = \mathbb E_{X \sim \mathbb P}\left[ G(\theta_n, X) \right] = \nabla f(\theta_n).
\end{align*}
Now, from the first-order optimality condition in \eqref{eq:so}, we have that $\inner{\nabla f(\theta_n)}{\theta_n - \theta^\star} \geq 0$. Using this and the facts $\mathbb E[\|G_n\|^2 | \mathcal F_n] \leq G_M^2$ (which follows from Assumption~\ref{assump:IS}\ref{ass:IS:dominated}) and $\sum_{n}\alpha_{n}^2 < \infty$, we have that \eqref{eq:R_n:RobbinsSiegmund} is as in Lemma~\ref{lemma:robbins:siegmund} with $A_n = 0$, $B_n = G_M^2 \alpha_{n+1}^2/2$, and $C_n = \alpha_{n+1}\inner{\nabla f(\theta_n)}{\theta_n - \theta^\star}$. In particular, notice that $C_n$ is $\mathcal F_n$-adapted. Therefore, Lemma~\ref{lemma:robbins:siegmund} guarantees that there is a random variable $R_\infty < \infty$ such that $R_n \overset{\text{a.s.}}{\to} R_\infty$, and that, with probability one,
\begin{align}
\label{eq:C_n}
    \sum_{n=0}^\infty \alpha_{n+1}\inner{\nabla f(\theta_n)}{\theta_n - \theta^\star} < \infty.
\end{align}
Using Inequality \eqref{eq:C_n} and Assumption \ref{assump:as:conv}\ref{assump:as:conv:1}, we obtain
\begin{align*}
    \sum_{n=0}^\infty \alpha_{n+1}\|\theta_n - \theta^\star\|^2 \leq \frac{1}{c_1}\sum_{n=0}^\infty \alpha_{n+1} ({f}(\theta_n) - {f}(\theta^\star)) \leq  \frac{1}{c_1} \sum_{n=0}^\infty \alpha_{n+1}\inner{\nabla {f}(\theta_n)}{\theta_n - \theta^\star} < \infty.
\end{align*}
This finishes the proof of Assertion \ref{lemma:as:conv:theta:1}.

\medskip

\emph{Assertion \ref{lemma:as:conv:theta:2}.} First notice that 
\begin{align*}
    M_{n+1} := \sum_{k=0}^{n} \alpha_{k+1}(G_k - \nabla f(\theta_k) )
\end{align*}
is a martingale adapted to the filtration $\mathcal F_{n+1} = \sigma(X_k^{(\mu_{k-1})} |\, k \leq n+1)$. Letting $d_{n} = \sqrt{b_{n}}$, we want to prove that $d_{n+1}^{-1} M_{n+1}  \overset{\text{a.s.}}{\to} 0$. From Lemma~\ref{lemma:conv:martingale}, we know that this holds if $\sum_{n=1}^\infty d_{n+1}^{-2} \mathbb E[\|M_{n+1}-M_{n}\|^2 | \mathcal F_{n}] < \infty$. In our notation, this is equivalent to showing that
\begin{align*}
    \sum_{n=0}^\infty \frac{1}{b_{n+1}} \mathbb E \left.\left[\left\|\alpha_{n+1} (G_n - \nabla f(\theta_n) ) \right\|^2 \right| \mathcal F_{n}  \right] < \infty.
\end{align*}
Due to Assumption~\ref{assump:IS}\ref{ass:IS:dominated}, we know that $\mathbb E[\|G_n\|^2 | \mathcal F_n] \leq G_M^2$, and using Jensen's inequality we have that $\|\nabla f(\theta_n)\|^2 \leq G_M^2$. Therefore, there exists some constant $c$ such that 
\begin{align*}
    \mathbb E \left.\left[\left\| G_n - \nabla f(\theta_n)\right\|^2 \right| \mathcal F_{n}  \right] \leq c,
\end{align*}
and consequently we only need to show that $\sum_{n=1}^\infty c {\alpha_{n+1}^2}/{b_{n+1}} < \infty$. This follows immediately from $\sum_{n=1}^\infty {\alpha_{n+1}^2} < \infty$, concluding the proof of assertion \ref{lemma:as:conv:theta:2}.

\medskip

\emph{Assertion \ref{lemma:as:conv:theta:3}.} We start by rewriting and upper bounding $\left\|\sum_{k=0}^n \alpha_{k+1} (\nabla {f}(\theta_k) - \nabla{f}(\theta^\star))  \right\|^2$ as
\begin{align*}
    b_{n+1}^2\left\|\sum_{k=0}^n \frac{\alpha_{k+1}}{b_{n+1}}  (\nabla {f}(\theta_k) - \nabla{f}(\theta^\star) ) \right\|^2 \leq b_{n+1}\sum_{k=0}^n \alpha_{k+1}\left\| \nabla {f}(\theta_k) - \nabla{f}(\theta^\star) \right\|^2,
\end{align*}
where the inequality follows from Jensen's inequality. Now, from Assumption~\ref{assump:as:conv}(ii.1) we have that
\begin{align*}
    \sum_{k=0}^n \alpha_{k+1}\|\nabla f(\theta_k) - \nabla f(\theta^\star)\|^2 \leq \sum_{k=0}^n \alpha_{k+1} c_2^2 \|\theta_k - \theta^\star\|^2 \leq \sum_{k=0}^\infty \alpha_{k+1} c_2^2 \|\theta_k - \theta^\star\|^2 < \infty,
\end{align*}
where the last inequality follows from assertion \ref{lemma:as:conv:theta:1}. This concludes the proof of assertion \ref{lemma:as:conv:theta:3}.

\medskip

\emph{Assertion \ref{lemma:as:conv:theta:4}.} For this, we will prove that $R_\infty = 0$, which implies that $\theta_n \overset{\text{a.s.}}{\to} \theta^\star$. We start by defining $b_{n+1} := \sum_{k=0}^n \alpha_{k+1}$. Since $\alpha_n = \alpha/n^\gamma$, for $\gamma \in (1/2,1)$, it can be shown that $\sum_{k=0}^\infty \left(\alpha_{k+1}/b_{k+1}\right) = \infty$. Moreover, using the inequality in the last display, we know that 
\begin{align*}
    \sum_{n=0}^\infty \frac{\alpha_{n+1}}{b_{n+1}} \left(b_{n+1}\left\|\theta_{n}-\theta^\star \right\|^2\right) < \infty.
\end{align*}
Therefore, there exists a subsequence $\{\theta_{n_i}\}_{i \in \mathbb N}$ for which, with probability one,
\begin{align}
\label{eq:conv:on:subsequence:1}
    \lim_{i \to \infty} b_{n_i+1}\left\|\theta_{n_i}-\theta^\star \right\|^2 = 0.
\end{align}
We are now ready to prove that $R_\infty = 0$. We start by bounding $R_{n+1}$ as
\begin{align}
\label{eq:bounds:R_n+1:1}
    R_{n+1} \leq \inner{\sum_{k=0}^n \alpha_{k+1} G_k}{\theta^\star-\theta_{n+1}}
    + \left\|\theta_{n+1}\right\|\left\|\theta^\star-\theta_{n+1}\right\|
    + \frac{1}{2}\left\| \theta_{n+1}-\theta^\star \right\|^2.
\end{align}
By restricting our attention to the subsequence $\{\theta_{n_i}\}_{i \in \mathbb N}$, and by using \eqref{eq:conv:on:subsequence:1} and the compactness of $\Theta$, we have that
\begin{align}
\label{eq:bounds:R_n+1:last:two:terms:1}
    0\leq \left\|\theta_{n_i+1}\right\|\left\|\theta^\star-\theta_{n_i+1}\right\|
    + \frac{1}{2}\left\| \theta_{n_i+1}-\theta^\star  \right\|^2 \overset{\text{a.s.}}{\to} 0.
\end{align}
We now focus on the first term on the right-hand side in \eqref{eq:bounds:R_n+1:1}, which we rewrite as
\begin{align*}
    \inner{\sum_{k=0}^n \alpha_{k+1} \left( (G_k - \nabla {f}(\theta_k)) + (\nabla {f}(\theta_k) - \nabla{f}(\theta^\star))  + \nabla{f}(\theta^\star)\right)}{\theta^\star-\theta_{n+1}}.
\end{align*}
By restricting our attention to the subsequence $\{\theta_{n_i}\}_{i \in \mathbb N}$, we have that
\begin{align*}
    \frac{1}{\sqrt{b_{n_i+1}}} \left\| \sum_{k=0}^{n_i} \alpha_{k+1} (G_k - \nabla {f}(\theta_k))  \right\| \sqrt{b_{n_i+1}} \left\| \theta^\star-\theta_{n_i+1}  \right\| \overset{\text{a.s.}}{\to} 0,
\end{align*}
using Lemma~\ref{lemma:noise:convergence} and \eqref{eq:conv:on:subsequence:1}. Moreover, 
\begin{align*}
    \left\| \sum_{k=0}^{n_i} \alpha_{k+1}(\nabla {f}(\theta_k) - \nabla{f}(\theta^\star)) \right\| \left\| \theta^\star-\theta_{n_i+1} \right\| \leq \sqrt{C} \sqrt{b_{n_i+1}} \left\| \theta^\star-\theta_{n_i+1}  \right\|\overset{\text{a.s.}}{\to} 0,
\end{align*}
where the inequality follows from Lemma~\ref{lemma:gradients:error:bound} and the convergence follows from \eqref{eq:conv:on:subsequence:1}. Finally, 
\begin{align*}
    \inner{\sum_{k=0}^n \alpha_{k+1} \nabla{f}(\theta^\star)}{\theta^\star-\theta_{n_i+1} } \leq 0
\end{align*}
follows from the first-order optimality conditions for $\theta^\star$. The last three display equations show that, with probability one,
\begin{align}
\label{eq:bounds:R_n+1:first:term:1}
    \limsup_{i \to \infty}\; \inner{\sum_{k=0}^{n_i} \alpha_{k+1} G_k}{\theta^\star-\theta_{n_i+1}} \leq 0.
\end{align}
Now, from \eqref{eq:bounds:R_n+1:last:two:terms:1} and \eqref{eq:bounds:R_n+1:first:term:1}, we have that $R_{n_i} \overset{\text{a.s.}}{\to} 0$, and since $R_n \overset{\text{a.s.}}{\to} R_\infty$, we obtain $R_\infty = 0$. This guarantees the desired convergence, and concludes the proof of assertion \ref{lemma:as:conv:theta:4}.

\medskip

\emph{Assertion \ref{lemma:as:conv:theta:5}.} First notice that since $\theta_n \overset{\text{a.s.}}{\to} \theta^\star$, and since $A_{i}^\star \theta^\star < b_i^\star$, there exists some random $N < \infty$ such that $A_{i}^\star \theta_n < b_i^\star$, for all $n \geq N$. Therefore, $A_{i}^{\theta_n} = A_{i}^\star$, for all $n \geq N$. We will now show that there exists some random $N < \infty$ such that $A_{a}^{\theta_n} = A_{a}^\star$, for all $n \geq N$. We start by rewriting iteration~\eqref{eq:NDA:theta} as
\begin{align*}
    \theta_{n+1} = \argmin_{\theta \in \Theta} \left\{ \inner{g}{\theta} + \inner{v_n}{\theta} +\frac{1}{2 b_{n+1}} \left\| \theta \right\|^2\right\},
\end{align*}
with 
\begin{align*}
    g = \nabla {f}(\theta^\star) \quad \text{and} \quad  
    v_n = \frac{1}{b_{n+1}} \left(\sum_{k=0}^n \alpha_{k+1} G_k - b_{n+1} \nabla {f}(\theta^\star) \right).
\end{align*}
From the KKT conditions for $\theta^\star$ we have that there exist $\lambda \in \mathbb R_{+}^{p_1}$ (with $p_1$ the dimension of $b_a^\star$) such that $\nabla {f}(\theta^\star) + A_a^{\star \top} \lambda = 0$. Moreover, using Assumptions~\ref{assump:as:conv}(v) we know that $\lambda$ can be chosen strictly positive. Therefore, $g = -{A_a^\star}^\top \lambda$, for some $\lambda \in \mathbb R_{++}^{p_1}$. We will now prove that $v_n \to 0$. For this, we first upper bound $v_n$ by
\begin{align*}
    \left\| \frac{1}{b_{n+1}} \sum_{k=0}^n \alpha_{k+1} \left(G_k - \nabla{f}(\theta_k) \right) \right\| + \left\| \frac{1}{b_{n+1}} \sum_{k=0}^n \alpha_{k+1} \left(\nabla{f}(\theta_k) - \nabla{f}(\theta^\star) \right) \right\|,
\end{align*}
and then notice that the two terms converge almost surely to zero using assertions \ref{lemma:as:conv:theta:2} and \ref{lemma:as:conv:theta:3}. Finally, since $b_{n} \to \infty$, we have that $1/b_{n+1} \to 0$ and the result now follows from Lemma~\ref{lemma:duchi:4.2}. This concludes the proof of assertion \ref{lemma:as:conv:theta:5}.
\end{proof}

\begin{lemma}
\label{lemma:noise:convergence}
Consider the setting of Theorem~\ref{thm:as:conv}, and let $b_{n+1} = \sum_{k=0}^n \alpha_{k+1}$. Then, 
\begin{align}
\label{eq:noise:convergence}
    \frac{1}{\sqrt{b_{n+1}}}\sum_{k=0}^n \alpha_{k+1}\begin{bmatrix}G_k - \nabla {f}(\theta_k) \\ H_k - \nabla_\mu v(\theta_k,\mu_k)\end{bmatrix} \overset{\text{a.s.}}{\to} 0.
\end{align}
\end{lemma}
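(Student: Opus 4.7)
The plan is to extend the argument used for assertion~\ref{lemma:as:conv:theta:2} in Lemma~\ref{lemma:as:conv:theta} from the one-dimensional noise $G_k - \nabla f(\theta_k)$ to the stacked noise vector
\[
\xi_k := \begin{bmatrix}G_k - \nabla f(\theta_k) \\ H_k - \nabla_\mu v(\theta_k,\mu_k)\end{bmatrix}.
\]
First, I would verify that $\{\xi_k\}_{k\in\mathbb N}$ is a martingale difference sequence with respect to the filtration $\mathcal F_{k+1} = \sigma(X_j^{(\mu_{j-1})}, X_j \mid j \leq k+1)$: indeed, using that $X_{k+1}^{(\mu_k)}\sim \mathbb P_{\mu_k}$ and $X_{k+1}\sim \mathbb P$ conditional on $\mathcal F_k$, the standard IS identity gives $\mathbb E[G_k\mid \mathcal F_k] = \nabla f(\theta_k)$, while Lemma~\ref{lemma:conv:diff:V} gives $\mathbb E[H_k\mid \mathcal F_k] = \nabla_\mu v(\theta_k,\mu_k)$. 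Consequently, the partial sums $M_{n+1} := \sum_{k=0}^{n}\alpha_{k+1}\xi_k$ form a martingale adapted to $\mathcal F_{n+1}$.

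The next step is to invoke Lemma~\ref{lemma:conv:martingale} with the non-random scaling $d_n = \sqrt{b_n}$, which is positive and satisfies $d_n\uparrow\infty$ (since $\alpha_n = \alpha/n^\gamma$ with $\gamma\in(1/2,1)$ implies $b_n\to\infty$). To apply the lemma I need a summable bound on $d_{n+1}^{-2}\mathbb E[\|M_{n+1}-M_n\|^2\mid \mathcal F_n] = b_{n+1}^{-1}\alpha_{n+1}^2\,\mathbb E[\|\xi_n\|^2\mid \mathcal F_n]$. For the conditional second moment, Assumption~\ref{assump:IS}\ref{ass:IS:dominated} yields $\mathbb E[\|G_n\|^2\mid \mathcal F_n]\leq G_M^2$ and $\mathbb E[\|H_n\|^2\mid \mathcal F_n]\leq H_M^2$; Jensen's inequality then gives $\|\nabla f(\theta_n)\|^2\leq G_M^2$ and $\|\nabla_\mu v(\theta_n,\mu_n)\|^2\leq H_M^2$. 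Combining these with the elementary bound $\|a-b\|^2\leq 2\|a\|^2+2\|b\|^2$ componentwise yields a uniform constant $C = 4(G_M^2+H_M^2)$ such that $\mathbb E[\|\xi_n\|^2\mid \mathcal F_n]\leq C$ almost surely.

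It then remains to check that $\sum_{n=0}^\infty \alpha_{n+1}^2/b_{n+1} < \infty$. Since $b_{n+1}\geq b_1 = \alpha_1>0$, this reduces to the already-assumed square-summability $\sum_n \alpha_n^2<\infty$, and Lemma~\ref{lemma:conv:martingale} concludes that $b_{n+1}^{-1/2}M_{n+1}\overset{\text{a.s.}}{\to}0$, which is precisely \eqref{eq:noise:convergence}. I do not anticipate any real obstacle in this argument: the block-diagonal structure of the conditional covariance (noted in Remark~\ref{assump:decomp}) means the two coordinates decouple, and the bound on the $H$-coordinate is furnished directly by Assumption~\ref{assump:IS}\ref{ass:IS:dominated}(iii.2) combined with Jensen's inequality, exactly mirroring the treatment of the $G$-coordinate in the proof of Lemma~\ref{lemma:as:conv:theta}\ref{lemma:as:conv:theta:2}.
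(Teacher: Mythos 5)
Your proposal is correct and follows essentially the same route as the paper: form the martingale $M_{n+1}=\sum_{k=0}^n\alpha_{k+1}\xi_k$, invoke Lemma~\ref{lemma:conv:martingale} with $d_n=\sqrt{b_n}$, bound $\mathbb E[\|\xi_n\|^2\mid\mathcal F_n]$ by a constant of the form $4(G_M^2+H_M^2)$ via Assumption~\ref{assump:IS}\ref{ass:IS:dominated} and Jensen, and reduce the required summability to $\sum_n\alpha_n^2<\infty$. No gaps.
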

\begin{proof}
First notice that 
\begin{align*}
    M_{n+1} := \sum_{k=0}^{n} \alpha_{k+1}\begin{bmatrix}G_k - \nabla {f}(\theta_k) \\ H_k - \nabla_\mu v(\theta_k,\mu_k)\end{bmatrix}
\end{align*}
is a martingale adapted to the filtration $\mathcal F_{n+1} = \sigma(X_k^{(\mu_{k-1})}, X_k^{(\nu_{k-1})} |\, k \leq n+1)$. Letting $d_{n} = \sqrt{b_{n}}$, \eqref{eq:noise:convergence} is equivalent to proving that $d_{n+1}^{-1} M_{n+1}  \overset{\text{a.s.}}{\to} 0$. From Lemma~\ref{lemma:conv:martingale}, we know that this holds if $\sum_{n=1}^\infty d_{n+1}^{-2} \mathbb E[\|M_{n+1}-M_{n}\|^2 | \mathcal F_{n}] < \infty$. In our notation, this is equivalent to showing that
\begin{align*}
    \sum_{n=0}^\infty \frac{1}{b_{n+1}} \mathbb E \left.\left[\left\|\alpha_{n+1} \begin{bmatrix}G_n - \nabla {f}(\theta_n) \\ H_n - \nabla_\mu v(\theta_n,\mu_n)\end{bmatrix} \right\|^2 \right| \mathcal F_{n}  \right] < \infty.
\end{align*}
Due to Assumption~\ref{assump:IS}\ref{ass:IS:dominated}, we know that $\mathbb E[\|G_n\|^2 | \mathcal F_n] \leq G_M^2$ and $\mathbb E[\|H_n\|^2 | \mathcal F_n] \leq H_M^2$. Moreover, using Jensen's inequality, we have also then that $\|\nabla {f}(\theta_n)\|^2 \leq G_M^2$ and $\|\nabla_\mu v(\theta_n,\mu_n)\|^2 \leq H_M^2$. Therefore, for $c:=4G_M^2 + 4H_M^2$ we have that 
\begin{align*}
    \mathbb E \left.\left[\left\| \begin{bmatrix}G_n - \nabla {f}(\theta_n) \\ H_n - \nabla_\mu v(\theta_n,\mu_n)\end{bmatrix} \right\|^2 \right| \mathcal F_{n}  \right] \leq c,
\end{align*}
and consequently we only need to show that $\sum_{n=1}^\infty c {\alpha_{n+1}^2}/{b_{n+1}} < \infty$. This follows immediately from $\sum_{n=1}^\infty {\alpha_{n+1}^2} < \infty$.
\end{proof}

\begin{lemma}
\label{lemma:gradients:error:bound}
Consider the setting of Theorem~\ref{thm:as:conv}, and let $b_{n+1} = \sum_{k=0}^n \alpha_{k+1}$. Then, 
\begin{align}
\label{eq:gradients:error:bound}
    \left\|\sum_{k=0}^n \alpha_{k+1} \begin{bmatrix}\nabla {f}(\theta_k) - \nabla {f}(\theta^\star) \\ \nabla_\mu v(\theta_k,\mu_k) - \nabla_\mu v(\theta^\star,\mu^\star)\end{bmatrix} \right\|^2 \leq C b_{n+1},
\end{align}
with probability one, for some almost surely finite $C$.
\end{lemma}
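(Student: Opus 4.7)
The plan is to first apply Jensen's inequality to the weighted sum and then bound each block of the joint vector separately using the smoothness assumptions on $f$ and $v$, falling back on the almost-sure active-set identification from Lemma~\ref{lemma:as:conv:theta}\ref{lemma:as:conv:theta:5} to handle the one cross-term where $\theta_k\neq\theta^\star$ makes things delicate. Writing $Y_k$ for the block vector inside the norm, Jensen's inequality with the convex weights $\alpha_{k+1}/b_{n+1}$ immediately gives
\[
  \left\| \sum_{k=0}^n \alpha_{k+1} Y_k \right\|^2 \leq b_{n+1} \sum_{k=0}^n \alpha_{k+1} \|Y_k\|^2,
\]
so the task reduces to exhibiting an almost surely finite random variable $C$ with $\sum_{k=0}^\infty \alpha_{k+1} \|Y_k\|^2 \leq C$. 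I would then decompose $\|Y_k\|^2$ into its $\theta$-block and $\mu$-block and control each separately.

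For the $\theta$-block, Assumption~\ref{assump:as:conv}\ref{assump:as:conv:2}(ii.1) yields the Lipschitz bound $\|\nabla f(\theta_k) - \nabla f(\theta^\star)\|^2 \leq c_2^2 \|\theta_k - \theta^\star\|^2$, and the summability of $\sum_k \alpha_{k+1} \|\theta_k - \theta^\star\|^2$ is exactly Lemma~\ref{lemma:as:conv:theta}\ref{lemma:as:conv:theta:1}. For the $\mu$-block I would telescope by $\pm\nabla_\mu v(\theta^\star,\mu_k)$ and apply $\|a+b\|^2 \leq 2\|a\|^2+2\|b\|^2$. The easy piece $\|\nabla_\mu v(\theta^\star,\mu_k) - \nabla_\mu v(\theta^\star,\mu^\star)\|^2$ is controlled by Assumption~\ref{assump:as:conv}\ref{assump:as:conv:2}(ii.2) as $c_2^2\|\mu_k-\mu^\star\|^2$, whose weighted sum is finite thanks to \eqref{eq:sum:alpha:theta^2:mu^2}, already established earlier in the proof of Theorem~\ref{thm:as:conv} before the present lemma is invoked.

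The main obstacle is the remaining cross term $\|\nabla_\mu v(\theta_k,\mu_k) - \nabla_\mu v(\theta^\star,\mu_k)\|^2$, because Assumption~\ref{assump:as:conv}\ref{assump:as:conv:3} only supplies the quadratic bound $c_3\|\theta_k-\theta^\star\|^2$ on the event $\{A^\star_a\theta_k = b^\star_a,\, A^\star_i\theta_k < b^\star_i\}$. My plan is to split the sum at the random index $N$ furnished by Lemma~\ref{lemma:as:conv:theta}\ref{lemma:as:conv:theta:5}, beyond which this event holds. For $k \geq N$, Assumption~\ref{assump:as:conv}\ref{assump:as:conv:3} applies and gives $\|\cdot\|^2 \leq c_3^2 \|\theta_k-\theta^\star\|^4$, which I further weaken to $c_3^2\diam(\Theta)^2\|\theta_k-\theta^\star\|^2$ via the boundedness of $\Theta$ (Assumption~\ref{assump:SO}\ref{assump:SO:bounded}); summability then follows yet again from Lemma~\ref{lemma:as:conv:theta}\ref{lemma:as:conv:theta:1}. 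For $k < N$, applying Jensen's inequality to Assumption~\ref{assump:IS}\ref{ass:IS:dominated}(iii.2) yields the uniform bound $\|\nabla_\mu v(\theta_k,\mu_k)\| \leq H_M$, so the initial chunk contributes at most $(2H_M)^2 \sum_{k<N}\alpha_{k+1}$, which is almost surely finite because $N$ is. Summing all four contributions produces the random variable $C$ required by the statement.
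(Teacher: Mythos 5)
Your proof is correct and follows essentially the same route as the paper's: Jensen's inequality to pass from the squared norm of the weighted sum to $b_{n+1}\sum_k\alpha_{k+1}\|Y_k\|^2$, a block decomposition with the telescoping term $\pm\nabla_\mu v(\theta^\star,\mu_k)$, the Lipschitz bounds of Assumption~\ref{assump:as:conv}\ref{assump:as:conv:2} combined with the summability of $\sum_k\alpha_{k+1}\left(\|\theta_k-\theta^\star\|^2+\|\mu_k-\mu^\star\|^2\right)$, and Assumption~\ref{assump:as:conv}\ref{assump:as:conv:3} for the cross term $\nabla_\mu v(\theta_k,\mu_k)-\nabla_\mu v(\theta^\star,\mu_k)$, with the quartic bound weakened to a quadratic one via the boundedness of $\Theta$.

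The one point where you diverge is the justification of the active-set event needed to apply Assumption~\ref{assump:as:conv}\ref{assump:as:conv:3}: the paper invokes Proposition~\ref{prop:fin:time:ident} and simply asserts that the bound ``holds with probability $1$ for large enough $n$,'' leaving the finitely many terms with $k<N$ implicit, whereas you invoke Lemma~\ref{lemma:as:conv:theta}\ref{lemma:as:conv:theta:5}, split the sum at the random index $N$, and bound the prefix using the uniform bound $\|\nabla_\mu v(\theta,\mu)\|\le H_M$ obtained from Assumption~\ref{assump:IS}\ref{ass:IS:dominated} via Jensen. Your choice is arguably the better one: Proposition~\ref{prop:fin:time:ident} is itself proved using Theorem~\ref{thm:as:conv}, which relies on the present lemma, while Lemma~\ref{lemma:as:conv:theta}\ref{lemma:as:conv:theta:5} is established independently of Theorem~\ref{thm:as:conv}; your argument therefore avoids that forward reference and makes the contribution of the first $N$ terms fully explicit. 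No gaps.
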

\begin{proof}
We start by rewriting the left-hand side of \eqref{eq:gradients:error:bound} as
\begin{align*}
    b_{n+1}^2\left\|\sum_{k=0}^n \frac{\alpha_{k+1}}{b_{n+1}}  \begin{bmatrix}\nabla {f}(\theta_k) - \nabla {f}(\theta^\star) \\ \nabla_\mu v(\theta_k,\mu_k) -\nabla_\mu v(\theta^\star,\mu_k) + \nabla_\mu v(\theta^\star,\mu_k) - \nabla_\mu v(\theta^\star,\mu^\star)\end{bmatrix} \right\|^2,
\end{align*}
which, using Jensen's inequality, can be upper bounded by
\begin{align*}
    b_{n+1}\sum_{k=0}^n \alpha_{k+1}\left\| \begin{bmatrix}\nabla {f}(\theta_k) - \nabla {f}(\theta^\star) \\ \nabla_\mu v(\theta_k,\mu_k) -\nabla_\mu v(\theta^\star,\mu_k) + \nabla_\mu v(\theta^\star,\mu_k) - \nabla_\mu v(\theta^\star,\mu^\star)\end{bmatrix} \right\|^2.
\end{align*}
The result now follows from Assumption~\ref{assump:as:conv}\ref{assump:as:conv:2}, as we now show. From the first bound in this assumption we have that 
\begin{align*}
    \sum_{k=0}^n \alpha_{k+1}\|\nabla {f}(\theta_k) - \nabla{f}(\theta^\star)\|^2 \leq \sum_{k=0}^n \alpha_{k+1} c_2^2 \|\theta_k - \theta^\star\|^2 \leq \sum_{k=0}^\infty \alpha_{k+1} c_2^2 \|\theta_k - \theta^\star\|^2 < \infty,
\end{align*}
where the last inequality follows from \eqref{eq:sum:alpha:theta^2:mu^2} in the proof of Theorem~\ref{thm:as:conv}. Moreover, from Assumption~\ref{assump:as:conv}\ref{assump:as:conv:3} we have that
\begin{align*}
    \sum_{k=0}^n \alpha_{k+1}\|\nabla_\mu v(\theta_k,\mu_k) -\nabla_\mu v(\theta^\star,\mu_k)\|^2 \leq \sum_{k=0}^n \alpha_{k+1} c_2^2 \|\theta_k - \theta^\star\|^4 \leq \sum_{k=0}^n \alpha_{k+1} \tilde{c} \|\theta_k - \theta^\star\|^2 < \infty,
\end{align*}
where the first inequality holds with probability one for large enough $n$ using Proposition~\ref{prop:fin:time:ident}, and the second inequality holds for some appropriate constant $\tilde{c}$, and follows from the compactness of $\Theta$. Finally, from the second bound in Assumption~\ref{assump:as:conv}\ref{assump:as:conv:2} we have that
\begin{align*}
    \sum_{k=0}^n \alpha_{k+1}\|\nabla_\mu v(\theta^\star,\mu_k) - \nabla_\mu v(\theta^\star,\mu^\star)\|^2 \leq \sum_{k=0}^n \alpha_{k+1} c_2^2 \|\mu_k - \mu^\star\|^2 \leq \sum_{k=0}^\infty \alpha_{k+1} c_2^2 \|\mu_k - \mu^\star\|^2 < \infty,
\end{align*}
where the last inequality follows from \eqref{eq:sum:alpha:theta^2:mu^2} in the proof of Theorem~\ref{thm:as:conv}.
Putting the last four display equations together, we obtain \eqref{eq:gradients:error:bound}.
\end{proof}


\section{Supporting Lemmas for Proposition~\ref{prop:fin:time:ident}}
\label{appendix:proof:fin:time:ident}

\begin{lemma}
\label{lemma:conv:gradients}
Consider the setting of Proposition~\ref{prop:fin:time:ident}, and let $b_{n+1} = \sum_{k=0}^n \alpha_{k+1}$. Then, 
\begin{align*}
    \frac{1}{b_{n+1}} \sum_{k=0}^n \alpha_{k+1} \begin{bmatrix}G_k \\ H_k \end{bmatrix} \overset{\text{a.s.}}{\to} \begin{bmatrix}\nabla{f}(\theta^\star) \\ \nabla v(\theta^\star,\mu^\star)\end{bmatrix}.
\end{align*}
\end{lemma}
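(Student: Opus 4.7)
The plan is to split the weighted average into a martingale (noise) part and a ``bias'' part capturing the deviation of the gradients at $(\theta_k,\mu_k)$ from those at $(\theta^\star,\mu^\star)$, and then handle each part separately. Concretely, write
\begin{align*}
    \frac{1}{b_{n+1}} \sum_{k=0}^n \alpha_{k+1} \begin{bmatrix}G_k \\ H_k \end{bmatrix}
    = \frac{1}{b_{n+1}} \sum_{k=0}^n \alpha_{k+1} \begin{bmatrix}G_k - \nabla f(\theta_k) \\ H_k - \nabla_\mu v(\theta_k,\mu_k) \end{bmatrix}
    + \frac{1}{b_{n+1}} \sum_{k=0}^n \alpha_{k+1} \begin{bmatrix}\nabla f(\theta_k) \\ \nabla_\mu v(\theta_k,\mu_k) \end{bmatrix}.
\end{align*}

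For the noise term, Lemma~\ref{lemma:noise:convergence} already establishes that the sum normalized by $\sqrt{b_{n+1}}$ converges almost surely to zero; since $b_{n+1}\to \infty$, dividing again by $\sqrt{b_{n+1}}$ gives the stronger statement that the first summand on the right-hand side converges almost surely to zero.

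For the bias term, I would first show pointwise almost sure convergence of the integrand. By Theorem~\ref{thm:as:conv}, $(\theta_k,\mu_k)\overset{\text{a.s.}}{\to}(\theta^\star,\mu^\star)$. Continuity of $\nabla f$ (Assumption~\ref{assump:SO}\ref{assump:SO:differentiable}) yields $\nabla f(\theta_k)\overset{\text{a.s.}}{\to} \nabla f(\theta^\star)$. For the second component, I would combine Assumption~\ref{assump:as:conv}\ref{assump:as:conv:3} (applied on the event that the active constraints in \eqref{eq:so} have already been identified, which holds eventually almost surely by Proposition~\ref{prop:fin:time:ident}) with Assumption~\ref{assump:as:conv}\ref{assump:as:conv:2} to obtain the triangle bound
\begin{align*}
    \|\nabla_\mu v(\theta_k,\mu_k)-\nabla_\mu v(\theta^\star,\mu^\star)\|
    \leq c_3\|\theta_k-\theta^\star\|^2 + c_2\|\mu_k-\mu^\star\|,
\end{align*}
which converges to zero almost surely by Theorem~\ref{thm:as:conv}.

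The remaining step is to convert pointwise almost sure convergence into convergence of the weighted average, i.e.\ to invoke a Toeplitz-type lemma: if $y_k\to y^\star$ (in $\mathbb R^{s+m}$) and $\alpha_{k+1}>0$ with $b_{n+1}=\sum_{k=0}^n\alpha_{k+1}\to\infty$, then $b_{n+1}^{-1}\sum_{k=0}^n \alpha_{k+1}y_k\to y^\star$. This follows by fixing $\varepsilon>0$, choosing $K$ such that $\|y_k-y^\star\|<\varepsilon$ for $k\geq K$, and bounding
\begin{align*}
    \Bigl\|\tfrac{1}{b_{n+1}}\sum_{k=0}^n\alpha_{k+1}(y_k-y^\star)\Bigr\|
    \leq \tfrac{1}{b_{n+1}}\sum_{k=0}^{K-1}\alpha_{k+1}\|y_k-y^\star\| + \varepsilon,
\end{align*}
and letting $n\to\infty$ (so the first term vanishes since the numerator is fixed while $b_{n+1}\to\infty$) and then $\varepsilon\to 0$. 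Applying this with $y_k=(\nabla f(\theta_k),\nabla_\mu v(\theta_k,\mu_k))$ on the almost sure event where both pointwise convergences hold completes the argument. The main (minor) subtlety is checking that the argument works pathwise on a single almost sure event, but since all invoked convergences hold on a common almost sure event — intersection of the one from Theorem~\ref{thm:as:conv}, the one from Proposition~\ref{prop:fin:time:ident}, and the one from Lemma~\ref{lemma:noise:convergence} — this presents no real difficulty.
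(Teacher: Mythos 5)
Your proposal is correct in substance and shares the paper's skeleton: the same split into a martingale-noise part, killed by Lemma~\ref{lemma:noise:convergence} together with $b_{n+1}\to\infty$, and a gradient-deviation part. Where you diverge is in the treatment of the second part. The paper bounds $\bigl\|\sum_{k\le n}\alpha_{k+1}(\nabla f(\theta_k)-\nabla f(\theta^\star),\,\nabla_\mu v(\theta_k,\mu_k)-\nabla_\mu v(\theta^\star,\mu^\star))\bigr\|^2\le C\,b_{n+1}$ via Jensen's inequality and the summability $\sum_k\alpha_{k+1}\|(\theta_k-\theta^\star,\mu_k-\mu^\star)\|^2<\infty$ established in the proof of Theorem~\ref{thm:as:conv} (this is Lemma~\ref{lemma:gradients:error:bound}), so the normalized term decays like $b_{n+1}^{-1/2}$. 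You instead prove pointwise almost sure convergence of $(\nabla f(\theta_k),\nabla_\mu v(\theta_k,\mu_k))$ to $(\nabla f(\theta^\star),\nabla_\mu v(\theta^\star,\mu^\star))$ and then apply a Toeplitz/Ces\`aro averaging lemma. Both routes are valid; the paper's gives a quantitative $O(b_{n+1}^{-1/2})$ rate and reuses a lemma already needed elsewhere, while yours is more elementary and only needs continuity of $\nabla f$ plus the Lipschitz-type bounds in Assumption~\ref{assump:as:conv}\ref{assump:as:conv:2}--\ref{assump:as:conv:3}.

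One wrinkle you should fix: you invoke Proposition~\ref{prop:fin:time:ident} to justify applying Assumption~\ref{assump:as:conv}\ref{assump:as:conv:3} for large $k$, but this lemma is itself a supporting ingredient in the proof of Proposition~\ref{prop:fin:time:ident} (the proposition uses it, via Lemma~\ref{lemma:duchi:4.2}, to show $(v_n,w_n)\to 0$), so as written your argument is circular. The non-circular reference is Lemma~\ref{lemma:as:conv:theta}\ref{lemma:as:conv:theta:5}, which establishes finite-time identification of the active set of \eqref{eq:so} for the $\theta_n$ iterates alone, using only Assumption~\ref{assump:as:conv}\ref{assump:as:conv:4} and the separability of the joint update; that is exactly the fact needed to place $\theta_k$ in the domain of validity of Assumption~\ref{assump:as:conv}\ref{assump:as:conv:3} for all large $k$. (The paper's own Lemma~\ref{lemma:gradients:error:bound} cites Proposition~\ref{prop:fin:time:ident} at the analogous step, so the dependency graph there is equally tangled, but in a self-contained proof you should route through Lemma~\ref{lemma:as:conv:theta}\ref{lemma:as:conv:theta:5}.) With that substitution your argument is complete.
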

\begin{proof}
We start by upper-bounding
\begin{align*}
    \left\| \frac{1}{b_{n+1}} \sum_{k=0}^n \alpha_{k+1} \begin{bmatrix}G_k \\ H_k \end{bmatrix} - \begin{bmatrix}\nabla f(\theta^\star) \\ \nabla v(\theta^\star,\mu^\star)\end{bmatrix} \right\| 
\end{align*}
by
\begin{align*}
    \left\| \frac{1}{b_{n+1}} \sum_{k=0}^n \alpha_{k+1} \left(\begin{bmatrix}G_k \\ H_k \end{bmatrix} - \begin{bmatrix}\nabla f(\theta_k) \\ \nabla_\mu v(\theta_k,\mu_k)\end{bmatrix}\right) \right\| + \left\| \frac{1}{b_{n+1}} \sum_{k=0}^n \alpha_{k+1} \left(\begin{bmatrix}\nabla f(\theta_k) \\ \nabla_\mu v(\theta_k,\mu_k)\end{bmatrix} - \begin{bmatrix}\nabla f(\theta^\star) \\ \nabla_\mu v(\theta^\star,\mu^\star)\end{bmatrix}\right) \right\|.
\end{align*}
Then, the first term converges almost surely to zero using Lemma~\ref{lemma:noise:convergence}, and the second term converges almost surely to zero using Lemma~\ref{lemma:gradients:error:bound}.
\end{proof}


\section{Supporting Lemmas for Theorem~\ref{thm:asymp:norm}}

\begin{lemma}
\label{lemma:assump:conv:CLT}
Consider the setting of Theorem~\ref{thm:asymp:norm}. Then, 
\begin{align*}
    \frac{1}{\sqrt{n}} \sum_{k=0}^{n-1} \left\| \begin{bmatrix}
        \theta_k - \theta^\star \\ \mu_k - \mu^\star
    \end{bmatrix} \right\|^2 \overset{\text{a.s.}}{\to} 0.
\end{align*}
\end{lemma}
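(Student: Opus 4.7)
The plan is to derive a per-iterate second-moment rate $\mathbb{E}\|\Delta_n\|^2 = \mathcal O(\alpha_n)$ with $\Delta_n := (\theta_n-\theta^\star,\mu_n-\mu^\star)$, sum it to get an $L^1$-bound on $n^{-1/2}\sum_{k<n}\|\Delta_k\|^2$, and then lift this bound to an almost-sure statement via a Borel--Cantelli argument along a polynomial subsequence. Simply invoking the already-established summability $\sum_n \alpha_{n+1}\|\Delta_n\|^2 < \infty$ from the proof of Theorem~\ref{thm:as:conv} is not enough: with step sizes $\alpha_n = \alpha/n^\gamma$ and $\gamma > 1/2$, Kronecker's lemma only yields $n^{-\gamma}\sum_{k<n}\|\Delta_k\|^2 \to 0$, and $n^{\gamma-1/2}\to\infty$. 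A genuine rate on $\|\Delta_n\|^2$ is needed.

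\textbf{Step 1: per-iterate rate.} Starting from the Robbins--Siegmund inequality \eqref{eq:R_n:RobbinsSiegmund:theta:mu:1} used in the proof of Theorem~\ref{thm:as:conv}, Proposition~\ref{prop:fin:time:ident} guarantees that, for $n$ larger than the (random) identification time $N$, the cross-term $\operatorname{diam}(\mathcal M)\alpha_{n+1}\|\nabla_\mu v(\theta_n,\mu_n)-\nabla_\mu v(\theta^\star,\mu_n)\|$ is dominated by $c_3 \operatorname{diam}(\mathcal M)\alpha_{n+1}\|\theta_n-\theta^\star\|^2$ via Assumption~\ref{assump:as:conv}\ref{assump:as:conv:3} and can be absorbed into the strong-convexity term supplied by Assumption~\ref{assump:as:conv}\ref{assump:as:conv:1}. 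Taking full expectations on the event $\{N\leq N_0\}$ for a fixed deterministic $N_0$ yields, for $n\geq N_0$, a scalar recursion of the form
$$u_{n+1} \leq (1 - c\,\alpha_{n+1})\,u_n + D\,\alpha_{n+1}^2,$$
and the standard induction for SA with $\gamma\in(1/2,1)$ delivers $u_n \leq K\alpha_n$. Since $\Theta\times\mathcal M$ is bounded, the complementary event $\{N > N_0\}$ contributes at most $\operatorname{diam}(\Theta\times\mathcal M)^2\,\mathbb P(N>N_0)$, which can be made arbitrarily small by enlarging $N_0$. Hence $\mathbb E\|\Delta_n\|^2 = \mathcal O(\alpha_n) = \mathcal O(n^{-\gamma})$, possibly up to an arbitrarily small additive constant.

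\textbf{Step 2 and 3: summation and Borel--Cantelli upgrade.} Summing Step~1,
$$\mathbb E\!\left[\tfrac{1}{\sqrt n}\sum_{k=0}^{n-1}\|\Delta_k\|^2\right] = \mathcal O\!\left(n^{-1/2}\sum_{k=1}^n k^{-\gamma}\right) = \mathcal O(n^{1/2-\gamma}) \to 0,$$
since $\gamma>1/2$. By Markov's inequality, for every $\varepsilon>0$,
$$\mathbb{P}\!\left(\tfrac{1}{\sqrt n}\sum_{k=0}^{n-1}\|\Delta_k\|^2 > \varepsilon\right) \leq \tfrac{C}{\varepsilon}\,n^{1/2-\gamma}.$$
Fix $p > 1/(\gamma-1/2)$ and set $n_j := \lceil j^p\rceil$; then $\sum_j n_j^{1/2-\gamma} \leq \sum_j j^{p(1/2-\gamma)} < \infty$, so Borel--Cantelli yields
$$\tfrac{1}{\sqrt{n_j}}\sum_{k=0}^{n_j-1}\|\Delta_k\|^2 \overset{\text{a.s.}}{\to} 0.$$
For arbitrary $n$, pick $j$ with $n_j\leq n < n_{j+1}$; using monotonicity of the partial sum together with $n_{j+1}/n_j = (1+1/j)^p\to 1$,
$$\tfrac{1}{\sqrt n}\sum_{k=0}^{n-1}\|\Delta_k\|^2 \leq \sqrt{\tfrac{n_{j+1}}{n_j}}\cdot \tfrac{1}{\sqrt{n_{j+1}}}\sum_{k=0}^{n_{j+1}-1}\|\Delta_k\|^2 \overset{\text{a.s.}}{\to} 0,$$
as claimed.

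\textbf{Main obstacle.} The real work is Step~1. The classical Polyak--Juditsky second-moment bound $\mathbb E\|\theta_n-\theta^\star\|^2 = \mathcal O(\alpha_n)$ for projected/unconstrained SA must be adapted to the joint NDA scheme \eqref{eq:NDA:SA+IS:iteration}, whose two updates are coupled through the dependence of $\nabla_\mu v(\theta_n,\mu_n)$ on $\theta_n$. This coupling is precisely what Assumption~\ref{assump:as:conv}\ref{assump:as:conv:3} is designed to neutralize, but that assumption only becomes usable once $A_a^{\theta_n}=A_a^\star$, which in turn requires the finite-time active-set identification of Proposition~\ref{prop:fin:time:ident}. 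Handling the random activation time $N$ inside an expectation-based rate argument --- rather than an almost-sure one --- is the most delicate aspect, and the plan above resolves it by conditioning on $\{N\leq N_0\}$ and exploiting the boundedness of $\Theta\times\mathcal M$ to control the residual event.
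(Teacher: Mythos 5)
Your plan takes a completely different route from the paper. The paper's own proof is short: by Kronecker's lemma it suffices to show $\sum_k k^{-1/2}\|(\theta_k-\theta^\star,\mu_k-\mu^\star)\|^2<\infty$ almost surely, and this is deduced directly from the $\alpha$-weighted summability \eqref{eq:sum:alpha:theta^2:mu^2} established in the proof of Theorem~\ref{thm:as:conv}. Your opening observation — that \eqref{eq:sum:alpha:theta^2:mu^2} by itself cannot yield a $\sqrt{n}$-normalized statement because $\alpha_k$ is of order $k^{-\gamma}$ with $\gamma>1/2$, so the weights $k^{-1/2}$ are \emph{larger} than $\alpha_{k+1}$ — is substantive: the factor $\sum_k 1/(\alpha_{k+1}\sqrt{k})$ invoked in the paper's bound behaves like $\sum_k k^{\gamma-1/2}$ and is not finite, so some additional rate information on $\|\theta_k-\theta^\star\|^2+\|\mu_k-\mu^\star\|^2$ does appear to be needed. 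Your Steps 2--3 are fine as stated: given a genuine bound $\mathbb{E}\|\Delta_k\|^2\le K k^{-\gamma}$ (writing $\Delta_k$ for the stacked error), the summation, Markov, Borel--Cantelli along $n_j=\lceil j^p\rceil$ and the monotone interpolation all go through (indeed one could skip Borel--Cantelli entirely: $\mathbb{E}\sum_k k^{-1/2}\|\Delta_k\|^2\le K\sum_k k^{-1/2-\gamma}<\infty$ gives a.s. finiteness of the weighted sum, and Kronecker finishes).

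The genuine gap is Step 1, which is where all the work lies. First, \eqref{eq:R_n:RobbinsSiegmund:theta:mu:1} is a recursion for $R_n$ as defined in \eqref{eq:R_{n+1}:expression}, with negative drift $-\alpha_{n+1}\langle(\nabla f(\theta_n),\nabla_\mu v(\theta^\star,\mu_n)),\Delta_n\rangle\le -c_1\alpha_{n+1}\|\Delta_n\|^2$. To close this into $u_{n+1}\le(1-c\,\alpha_{n+1})u_n+D\alpha_{n+1}^2$ you need the reverse comparison $R_n\le C\|\Delta_n\|^2$, which is not available for dual averaging: $R_n$ also contains the nonnegative inner product $\langle\sum_{k\le n-1}\alpha_{k+1}(G_k,H_k)+(\theta_n,\mu_n),\,(\theta^\star,\mu^\star)-(\theta_n,\mu_n)\rangle$, which is first order in $\|\Delta_n\|$ with a coefficient driven by the fluctuations of the aggregated gradient, not $O(\|\Delta_n\|^2)$. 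If instead you take $u_n=\mathbb{E}\|\Delta_n\|^2$ directly, the implicit one-step inequality $\|\Delta_{n+1}\|^2\le\|\Delta_n\|^2-2\alpha_{n+1}\langle\cdot,\Delta_n\rangle+\alpha_{n+1}^2\|\cdot\|^2$ is a property of projected SGD, not of the NDA update \eqref{eq:NDA:SA+IS:iteration}, whose prox-center is fixed at the initial point and whose linear term aggregates all past gradients; the classical Polyak--Juditsky second-moment rate does not transfer without a dedicated dual-averaging analysis (this is exactly the nontrivial part of \citet{duchi2021asymptotic}). Second, the truncation on $\{N\le N_0\}$ is not legitimate inside the conditional-expectation inequality: $N$ is defined through ``$A_a^\star\theta_n=b_a^\star$ and $C_a^\star\mu_n=d_a^\star$ for all $n\ge N$'', so it is not a stopping time, $\mathbf{1}_{\{N\le N_0\}}$ is not $\mathcal{F}_n$-measurable, and it cannot be passed through $\mathbb{E}[\,\cdot\,|\,\mathcal{F}_n]$. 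Third, even granting a bound $\mathbb{E}\|\Delta_n\|^2\le K\alpha_n+\delta$ with a fixed (however small) $\delta>0$ coming from $\mathbb{P}(N>N_0)$, the normalized sum inherits a term $\delta\sqrt{n}\to\infty$, so the Markov/Borel--Cantelli step collapses; the remainder event must be handled pathwise (e.g., proving the a.s. limit on each $\{N\le N_0\}$ and letting $N_0\to\infty$), not through a uniform-in-$n$ expectation bound. A smaller issue: absorbing the cross term $\mathrm{diam}(\mathcal{M})\,c_3\,\alpha_{n+1}\|\theta_n-\theta^\star\|^2$ into the drift requires $c_1>\mathrm{diam}(\mathcal{M})\,c_3$, which is not assumed; avoiding this again uses $\|\mu_n-\mu^\star\|\to0$, i.e., yet another random time. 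In short, the outline is reasonable, but the per-iterate rate for the coupled NDA iteration is a substantial missing piece rather than a routine adaptation, so the proof as proposed does not stand.
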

\begin{proof}
Notice that, by Kronecker's lemma, it is enough to prove that with probability one,
\begin{align*}
    \sum_{k=0}^{\infty} \frac{1}{\sqrt{k}} \left\| \begin{bmatrix}
        \theta_k - \theta^\star \\ \mu_k - \mu^\star
    \end{bmatrix} \right\|^2 < \infty.
\end{align*}
The term on the left-hand side can be upper bounded as
\begin{align*}
    \sum_{k=0}^{\infty} \frac{1}{\sqrt{k}} \left\| \begin{bmatrix}
        \theta_k - \theta^\star \\ \mu_k - \mu^\star
    \end{bmatrix} \right\|^2 
    \leq 
    \left( \sum_{k=0}^{\infty} \frac{1}{\alpha_{k+1} \sqrt{k}} \right) 
    \left( \sum_{k=0}^{\infty} \alpha_{k+1} \left\| \begin{bmatrix}
        \theta_k - \theta^\star \\ \mu_k - \mu^\star
    \end{bmatrix} \right\|^2 \right)<\infty,
\end{align*}
with probability one, where the last inequality follows from equation~\eqref{eq:sum:alpha:theta^2:mu^2} in the proof of Theorem~\ref{thm:as:conv} and the fact that $\alpha_n = \alpha/n^\gamma$, for $\gamma \in (1/2,1)$, which guarantees that $\sum_{k=0}^\infty 1/ (\alpha_{k+1} \sqrt{k}) < \infty$.
\end{proof}

\begin{lemma}
\label{lemma:CLT:gradient:error}
Consider the setting of Theorem~\ref{thm:asymp:norm}. Then, 
\begin{align*}
    \frac{1}{\sqrt{n}} \sum_{k=0}^{n-1} \left(\begin{bmatrix}G_k \\ H_k \end{bmatrix} - \begin{bmatrix}\nabla{f}(\theta_k) \\ \nabla_\mu v(\theta_k,\mu_k)\end{bmatrix}\right) \overset{d}{\to} \mathcal N \left( 0, \Sigma^\star \right),
\end{align*}
with
\begin{align*}
    \Sigma^\star = \begin{bmatrix}
        \text{Var}_{X^{(\mu^\star)} \sim \mathbb P_{\mu^\star}}\left[ G_{\mu^\star}(\theta^\star,X^{(\mu^\star)})\right] & 0 \\ 0 & \text{Var}_{X \sim \mathbb P}\left[ H(\theta^\star, \mu^\star, X)\right]
    \end{bmatrix}.
\end{align*}
\end{lemma}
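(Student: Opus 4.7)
The plan is to verify that the sequence $\{\xi_k\}_{k \in \mathbb N}$ defined in~\eqref{eq:def:xi_k} is a zero-mean, square-integrable martingale difference process with respect to the filtration $\{\mathcal F_{k+1}\}_{k \in \mathbb N}$, and then to apply the martingale central limit theorem (Lemma~\ref{lemma:clt:martingale}) to obtain the claimed CLT. The Cram\'er--Wold device reduces the multivariate statement to the one-dimensional convergence $\frac{1}{\sqrt n} \sum_{k=0}^{n-1} t^\top \xi_k \overset{d}{\to} \mathcal N(0, t^\top \Sigma^\star t)$ for every $t \in \mathbb R^{s+m}$, which is exactly the form handled by Lemma~\ref{lemma:clt:martingale}.

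The first step is to set up the triangular array. I would define $Y_{n,k} := t^\top \xi_{k-1}/\sqrt n$ for $1 \leq k \leq n$, together with the nested $\sigma$-fields $\mathcal F_{n,k} := \mathcal F_k$. The fact that $\xi_{k-1}$ is $\mathcal F_k$-measurable with $\mathbb E[\xi_{k-1} \mid \mathcal F_{k-1}]=0$ follows from the identities $\mathbb E[G_k \mid \mathcal F_k]=\nabla f(\theta_k)$ and $\mathbb E[H_k \mid \mathcal F_k]=\nabla_\mu v(\theta_k,\mu_k)$ established in the proof of Theorem~\ref{thm:as:conv}, so that $\{Y_{n,k}\}$ is indeed a martingale difference array. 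Square-integrability, and in fact a uniform conditional second-moment bound, is a consequence of Assumption~\ref{assump:IS}\ref{ass:IS:dominated} combined with Jensen's inequality applied to $\nabla f(\theta_k)$ and $\nabla_\mu v(\theta_k,\mu_k)$, exactly as recalled in the proof of Theorem~\ref{thm:asymp:norm}.

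The next step is to verify the two hypotheses of Lemma~\ref{lemma:clt:martingale}. The conditional Lindeberg condition
\[
\sum_{k=1}^{n} \mathbb E\!\left[ Y_{n,k}^2\, \mathbf{1}_{\{|Y_{n,k}| > \varepsilon\}} \mid \mathcal F_{n,k-1}\right] = \frac{1}{n}\sum_{k=0}^{n-1}\mathbb E\!\left[ (t^\top\xi_k)^2 \mathbf{1}_{\{|t^\top\xi_k|>\varepsilon\sqrt n\}} \mid \mathcal F_k\right] \overset{P}{\to} 0
\]
is precisely Assumption~\ref{assump:asymp:norm:unif:int}\ref{assump:asymp:norm:1:1}. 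The conditional variance convergence
\[
\sum_{k=1}^{n} \mathbb E[Y_{n,k}^2 \mid \mathcal F_{n,k-1}] = t^\top\!\left(\frac{1}{n}\sum_{k=0}^{n-1}\mathbb E[\xi_k \xi_k^\top \mid \mathcal F_k]\right)\! t \overset{P}{\to} t^\top \Sigma^\star t
\]
is Assumption~\ref{assump:asymp:norm:unif:int}\ref{assump:asymp:norm:1:2}, recalling from Remark~\ref{assump:decomp} that the conditional covariance $\mathbb E[\xi_k \xi_k^\top \mid \mathcal F_k]$ is automatically block-diagonal due to the independence of the samples $X_{k+1}^{(\mu_k)} \sim \mathbb P_{\mu_k}$ and $X_{k+1} \sim \mathbb P$, so that the cross blocks vanish and the target covariance $\Sigma^\star$ is exactly the block-diagonal matrix appearing in the statement.

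With both conditions in hand, Lemma~\ref{lemma:clt:martingale} yields $\sum_{k=1}^n Y_{n,k} \overset{d}{\to} Z$ where $Z$ has characteristic function $\mathbb E[\exp(-\tfrac{1}{2}(t^\top \Sigma^\star t) s^2)]$ evaluated at $s=1$, i.e.\ $Z \sim \mathcal N(0, t^\top \Sigma^\star t)$. Since this holds for every $t \in \mathbb R^{s+m}$, the Cram\'er--Wold device delivers the multivariate convergence $\frac{1}{\sqrt n}\sum_{k=0}^{n-1}\xi_k \overset{d}{\to} \mathcal N(0,\Sigma^\star)$, completing the proof. The only substantive work is in the verification of Assumption~\ref{assump:asymp:norm:unif:int}, which is done outside this lemma; within the lemma itself, the argument is a clean application of a standard martingale CLT.
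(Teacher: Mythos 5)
Your proposal is correct and follows essentially the same route as the paper's proof: Cram\'er--Wold reduction, the martingale difference array $Y_{n,k}=t^\top\xi_{k-1}/\sqrt{n}$ with filtration $\mathcal F_{n,k}=\mathcal F_k$, square-integrability from Assumption~\ref{assump:IS}\ref{ass:IS:dominated}, and the two hypotheses of Lemma~\ref{lemma:clt:martingale} verified exactly via Assumption~\ref{assump:asymp:norm:unif:int}\ref{assump:asymp:norm:1:1} and \ref{assump:asymp:norm:unif:int}\ref{assump:asymp:norm:1:2}. No gaps; the block-diagonal form of $\Sigma^\star$ is handled just as in Remark~\ref{assump:decomp}.
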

\begin{proof}
The proof follows from \cite[Corollary~3.1]{hall2014martingale}, using Assumption~\ref{assump:asymp:norm:unif:int} and the Cramér-Wold theorem. Fix $t \in \mathbb R^{s+m}$. Then, recalling the shorthand notation $\xi_k$ for the noise vector, we have that
\begin{align*}
    S_{n, i} := \frac{1}{\sqrt{n}} \sum_{k=0}^{i-1} t^\top \xi_k,
\end{align*}
with $i \leq n$, defines a zero-mean, square-integrable martingale array, adapted to the filtration $\mathcal F_{n,i} := \mathcal F_i = \sigma(X_k^{(\mu_{k-1})}, X_k |\, k \leq i)$, and with differences
\begin{align*}
    Y_{n,i} := S_{n,i} - S_{n,i-1} = \frac{1}{\sqrt{n}} t^\top \xi_{i-1}.
\end{align*}
The square-integrability follows immediately from Assumption~\ref{assump:IS}\ref{ass:IS:dominated}. We will now verify that the three conditions of Lemma~\ref{lemma:clt:martingale} are satisfied. For this, notice that Assumption~\ref{assump:asymp:norm:unif:int}\ref{assump:asymp:norm:1:1} is equivalent to the first condition. Moreover, the second condition follows immediately from Assumption~\ref{assump:asymp:norm:unif:int}\ref{assump:asymp:norm:1:2} with
\begin{align*}
     \eta^2 := t^\top \Sigma^\star t,
\end{align*}
using the fact that $Y_{n,i}^2 = t^\top \left(\xi_{i-1} \xi_{i-1}^\top\right) t$. Finally, the $\sigma$-fields $\mathcal F_{n,i}$ are clearly nested by definition. Therefore, Lemma~\ref{lemma:clt:martingale} implies that
\begin{align*}
    S_{n,n} =  t^\top \left( \frac{1}{\sqrt{n}} \sum_{k=0}^{n-1} \xi_k \right) \overset{d}{\to} \mathcal N \left (0, \eta^2 \right).
\end{align*}
Now notice that $\mathcal N \left (0, \eta^2 \right) = t^\top \mathcal N(0, \Sigma^\star)$. Therefore, by the Cramér-Wold theorem, we have that $\frac{1}{\sqrt{n}} \sum_{k=0}^{n-1} \xi_k \overset{d}{\to} \mathcal N(0, \Sigma^\star)$, which concludes the proof.
\end{proof}

\end{document}